\author{ Maciej Dziemia\'nczuk\footnote{
Institute of Informatics, University of Gda\'nsk, Poland;  E-mail: \texttt{mdziemia@inf.ug.edu.pl}.}
} 
\title{On Directed Lattice Paths With Additional Vertical Steps}
\newtheorem{theorem}{Theorem}
\newtheorem{proposition}{Proposition}
\newtheorem{corollary}{Corollary}
\newtheorem{lemma}{Lemma}
\theoremstyle{definition}
\newtheorem{example}{Example}
\begin{document}
\maketitle

\begin{abstract}
The paper is devoted to the study of lattice paths that consist of vertical steps $(0,-1)$ and non-vertical steps $(1,k)$ for some $k\in \mathbb Z$.
Two special families of primary and free lattice paths with vertical steps are considered.
It is shown that for any family of primary paths there are equinumerous families of proper weighted lattice paths that consist of only non-vertical steps.
The relation between primary and free paths is established and some combinatorial and statistical properties are obtained. It is shown that the expected number of vertical steps in a primary path running from $(0,0)$ to $(n,-1)$ is equal to the number of free paths running from $(0,0)$ to $(n,0)$. Enumerative results with generating functions are given.
Finally, a few examples of families of paths with vertical steps are presented and related to {\L}ukasiewicz, Motzkin, Dyck and Delannoy paths.
\end{abstract}

\section{Introduction}

A \emph{lattice path} is a sequence of points from $\mathbb Z\times \mathbb Z$. A pair $(i,j)$ from $\mathbb Z\times \mathbb Z$ is called a \emph{step} if $(i,j)\neq (0,0)$. An example of a lattice path is given in Fig.~\ref{fig:examp1}. For simplicity of notation, we represent lattice paths as the words over fixed set of steps $\mathcal S$. We shall say that a path is an $\mathcal S$-path if its steps belong to $\mathcal S$. Lattice paths appear in many contexts. They are used in physics \cite{Rensburg}, computer science \cite{KnuthVol2}, and probability theory \cite{TakasBook}. 
There are a huge number of papers on enumeration of lattice paths for specified sets of steps. We refer the reader to the survey of Humphreys \cite{Humphreys} and to the references therein.

\begin{figure}[ht]
\begin{center}
	\includegraphics[width=30mm]{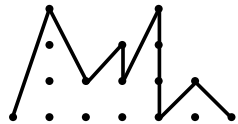}
	\caption{A lattice path running from $(0,0)$ to $(6,0)$.}
	\label{fig:examp1}
\end{center}
\end{figure}

In this paper we consider only the following types of steps. Namely, 
let $V$ denote the \emph{vertical step} $(0,-1)$ and let $S_k$ denote the \emph{non-vertical step} $(1,k)$, for $k\in\mathbb Z$. Additionally, we separate non-vertical steps into two groups. If $k\geq 0$ then $S_k$ is called \emph{up step} and denoted by $U_k$. If $k<0$ then $S_k$ is called \emph{down step} and denoted by $D_{-k}$. 
For example, the path in Fig.~\ref{fig:examp1} can be represented by its starting point $(0,0)$ and the sequence $U_3 D_2 U_1 V U_2 V^3 U_1 D_1$.

There are several well-known examples of paths that consist of non-vertical steps. For instance, Dyck paths are composed of steps $U_1$ and $D_1$, Motzkin and $N$-{\L}ukasiewicz paths are those for which the sets of steps are $\{U_1, U_0, D_1\}$ and $\{ U_N, U_{N-1}, \ldots , U_0, D_1\}$, respectively. All of these examples are essentially one-dimensional objects. It is well known, see e.g. Deutsch \cite{Deutsch2}, that the number of Dyck paths running from $(0,0)$ to $(2n,n)$ which never go below the $x$-axis is the $n$th Catalan number given by
\[
	C_n = \frac{1}{n+1} \binom{2n}{n}.
\]
The number of Motzkin paths \cite{Shapiro1} running from $(0,0)$ to $(n,0)$ which never go below the $x$-axis is equal to
\[
	\sum_{k=0}^{\lfloor \frac{n}{2} \rfloor} \binom{n}{2k} C_k.
\]
A unified enumerative and asymptotic theory of paths consisting of non-vertical steps is developed by Banderier and Flajolet \cite{Banderier2002} and concerns with the kernel method.

\begin{figure}[ht]
\begin{center}
	\includegraphics[width=57mm]{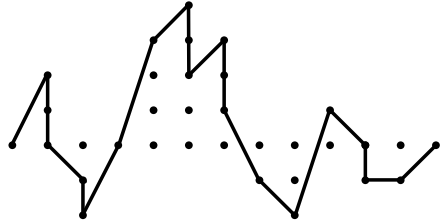}
	\caption{A free $\mathcal S$-path running from $(0,0)$ to $(12,0)$.}
	\label{fig:2}
\end{center}
\end{figure}

Another class of lattice paths are those which consist of vertical and non-vertical steps. The classical example is the family of Delannoy paths \cite{Delannoy} which originally consist of steps $(1,0)$, $(1,1)$, and $(0,1)$. In our notation, a \emph{Delannoy path} is a lattice path that consists of steps $U_0, D_1, V$ and runs from $(0,0)$ to some $(n,m)$ in the fourth quarter, i.e., $n\geq 0$ and $m\leq 0$. The number of Delannoy paths running from $(0,0)$ to $(n,m)$ is equal to 
\[
	\sum_{j=0}^n \binom{m}{j} \binom{n+m-j}{m}.
\]
Several families of paths with steps from $\{V, U_1, U_0, D_1 \}$ are considered by the author in \cite{mdRect}.

\vspace{0.2cm}
In this paper we consider families of paths with the set of steps being an arbitrary subset of $\{ V, S_N, S_{N-1},\ldots \}$, for fixed $N\geq 0$. 
Throughout the paper we use the symbol $\Omega_N$ to denote the following set of steps
\[
	\Omega_N = \{ S_k : k\leq N \},
	\qquad
	\text{for} \,\, N\geq 0.
\]
For each $m,n\in\mathbb Z$ and $\mathcal S \subset \{V\} \cup \Omega_N$, we define two families of paths.
Namely, let $\mathcal F_{\mathcal S}(m,n)$ denote the set of all $\mathcal S$-path running from $(0,0)$ to $(n,-m)$. Let $\mathcal P_{\mathcal S}(m,n)$ denote the set of these paths from $\mathcal F_{\mathcal S}(m,n)$ for which all points except possibly the last one lie on or above the $x$-axis.
We call a path from $\mathcal F_{\mathcal S}(m,n)$ a \emph{free path} and a path from $\mathcal P_{\mathcal S}(m,n)$ an \emph{$m$-primary path}.
For instance, a free path is given in Fig.~\ref{fig:2} and a primary path is given in Fig.~\ref{fig:1}.

\begin{figure}[ht]
\begin{center}
	\includegraphics[width=35mm]{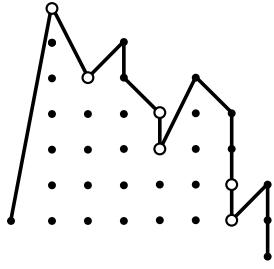}
	\caption{A $1$-primary $\mathcal S$-path running from $(0,0)$ to $(7,-1)$. Lattice points which determine a decomposition of the path are drawn by open circles.}
	\label{fig:1}
\end{center}
\end{figure}

In Section~\ref{sec:biject} we show that for any $\mathcal V \subseteq \Omega_N \cup \{V\}$ which contains $U_N$ and $V$ there is the corresponding set of steps 
\[
	\mathcal L = \big( \mathcal V \setminus \{V\} \big) \cup \{ D_1, U_0, U_1, \ldots, U_N \}
\] such that for any $m\geq 0$ and $n\geq 1$, we have
\begin{equation}
	\label{eq:biject}
	|\mathcal P_{\mathcal V}(m,n)| = \sum_{\pi\in\mathcal P_{\mathcal L}(m,n)} w(\pi),
\end{equation}
where $w$ is a weight function over paths from $\mathcal P_{\mathcal L}(m,n)$.
This means that additional vertical step $V$ in primary $\mathcal V$-paths can be encoded by the proper weights of non-vertical steps in $\mathcal L$-paths. 
To show the above equality we define $w$-weighted primary $\mathcal L$-paths which are primary $\mathcal L$-paths whose steps have assigned nonnegative integers depending on the weight function $w$. Then we define a bijection between primary $\mathcal V$-paths and $w$-weighted primary $\mathcal L$-paths.

In Section~\ref{sec:properties} we establish a relation between primary and free paths. Namely, we show that for any $\mathcal V \subset \Omega_N\cup\{V\}$ which consists $V$ and $U_N$, we have
\begin{align*}
	|\mathcal P_{\mathcal V}(1,n)| 
	&= \frac{1}{n} 
	\big( 
	|\mathcal F_{\mathcal V}(1,n)| - |\mathcal F_{\mathcal V}(0,n)| 
	\big)
	\\
	&= \frac{1}{n} \sum_{j=0}^{Nn+1} \binom{n+j-1}{j}
	|\mathcal F_{\mathcal N}(1-j,n)|,
\end{align*} 
where $\mathcal N = \mathcal V \setminus \{V\}$.
We show that 
\begin{align*}
	\#Steps(V \in \mathcal P_{\mathcal V}(1,n))
	&=
	|\mathcal F_{\mathcal V}(0,n)|,
	\\
	\#Steps(S_k \in \mathcal P_{\mathcal V}(1,n))
	&=
	|\mathcal F_{\mathcal V}(1+k,n-1)|,
	\\
	\#Steps(\mathcal P_{\mathcal V}(1,n))
	&= |\mathcal F_{\mathcal V}(1,n)|,
\end{align*}
where $\#Steps(S \in \mathcal P_{\mathcal V}(1,n))$ denote the total number of occurrences of the step $S$ in the set of paths from $\mathcal P_{\mathcal V}(1,n)$
and $\#Steps(\mathcal P_{\mathcal V}(1,n))$ denote the total number of all steps in $\mathcal P_{\mathcal V}(1,n)$.
These relations shed some light on the statistical properties of the $\mathcal V$-paths. Any $\mathcal N$-path running from $(0,0)$ to $(n,m)$ contains exactly $n$ steps. The number of steps in a $\mathcal V$-path is equal to or greater than $n$. We show that the expected number of steps in a $1$-primary $\mathcal V$-path running from $(0,0)$ to $(n,-1)$ is equal to
\[
	n \left( 1 + \frac{ |\mathcal F_{\mathcal V}(0,n)|}{|\mathcal F_{\mathcal V}(1,n)| - |\mathcal F_{\mathcal V}(0,n)| } \right).
\]

In Section~\ref{sec:enumer} we derive some enumerative results for such paths. We show that, for any $n\geq 1$ and $m\in\mathbb Z$, we have
\begin{align*}
	|\mathcal P_{\mathcal V}(1,n)| 
	&= \frac{1}{n} [y^{Nn+1}] \frac{1}{(1-y)^n} 
	\Big( \sum_{S_k\in\mathcal V} y^{N-k} \Big)^n,
	\\
	|\mathcal F_{\mathcal V}(m,n) | 
	&= [y^{N n + m}] \frac{1}{(1-y)^{n+1}} 
	\Big( \sum_{S_k\in\mathcal N} y^{N-k} \Big)^n.
\end{align*}
In Section~\ref{sec:riordan} we show that the array $(|\mathcal F_{\mathcal V}(i-(N+1)j,j)|)_{i,j\geq 0}$ is the proper Riordan array $D_{\mathcal V}$, where
\[
	D_{\mathcal V} = \left( \frac{1}{1-y}, \frac{y}{1-y} \sum_{S_k\in\mathcal V} y^{N-k}  \right).
\]
Let $P_{m}(x)$ be the generating function of the sequence $(|\mathcal P_{\mathcal V}(m,n)|)_{n\geq 0}$. As a consequence of \eqref{eq:biject}, we show  that $P_{m}(x)$ satisfies the following functional equation
\begin{align*}
	P_{0}(x) 
	&= 1 + w(U_0^{0,0}) x P_0(x)
	+ x P_{0}(x) \sum_{k=1}^N
	\sum_{d=1}^{k} 
	w(U_k^{0,d})
	\sum_{M}
	\prod_{j=1}^d 
	(P_{m_j}(x)-1),
	\\
	P_{m}(x) 
	&= 1 + w(D_m) x + x \sum_{k=0}^N \sum_{d=1}^{k+1}  
	w(U_{k}^{m,d})  
	\sum_{M}
	\prod_{j=1}^d (P_{m_j}(x)-1),
\end{align*}
for certain constants $w(D_m)$ and $w(U^{m,d}_k)$.

In Section~\ref{sec:examples} we present five examples of families of lattice paths with vertical steps for which we apply results obtained in the previous sections. Namely, we consider the following five sets of lattice steps:
\begin{align*}
	\mathcal A &= \{V\} \cup \{ S_k : K\leq k\leq N \}
	= \{V, U_N, U_{N-1} ,\ldots, U_0, D_1, \ldots, D_K \},
	\\
	\mathcal B &= \{V\} \cup \{ S_k : 1\leq k\leq N \}
	= \{V, U_N, U_{N-1} ,\ldots, U_0, D_1, \},
	\\
	\mathcal C &= \{V\} \cup \Omega_N
	= \{V, U_N, U_{N-1} ,\ldots, U_0, D_1, D_2, \ldots \},
	\\
	\mathcal D &= \{V, U_N, D_K \},
	\\
	\mathcal E &= \{V, U_1, U_0 \}.
\end{align*}
for any fixed $N\geq 0$ and $K\geq 1$. It is worth pointing out that $\mathcal B$-paths, $\mathcal C$-paths, $\mathcal D$-paths, and $\mathcal E$-path are generalized {\L}ukasiewicz paths, Raney paths recently considered by the author in \cite{mdRaney}, generalized Dyck paths, and Delannoy paths, respectively.

\section{Preliminaries}
\label{sec:pre}

Let $N\geq 0$ be a nonnegative integer and let $\mathcal S$ be a subset of $\Omega_N\cup\{V\}$ and $m,n\in\mathbb Z$.
Any $m$-primary $\mathcal S$-path $\mu\in\mathcal P_{\mathcal S}(m,n)$ which has at least two non-vertical steps can be uniquely decomposed into some number of vertical steps and shorter primary $\mathcal S$-paths. Let $U_h$ be the first step of $\mu$. First, suppose that $(m,h) \neq (0,0)$.
The path $\mu$ passes through the points $(x_1,h), (x_2, h-1),\ldots,(x_{h+t},-m+1)\in\mathbb R\times \mathbb Z$, such that they are chosen to be the left-most ones, i.e.,
$x_i = \min\{ x : \mu \text{ passes through } (x,h-i+1) \}$.
Note that $x_1 = 1$ and some of $x_i$ may be not integer. Therefore, denote by $\Pi(\mu)$ the set of these points that both coordinates are integers. For instance, points from $\Pi$ for the path given in Fig.~\ref{fig:1} are marked by open circles. Cutting $\mu$ at these points we obtain a decomposition of $\mu$ into $U_h$ and $r$ subpaths $\alpha^{(1)},\ldots,\alpha^{(r)}$, where $r=|\Pi(\mu)|$. Each $\alpha^{(i)}$ is either a single vertical step $V$ or an $m_i$-primary $\mathcal S$-path for some $m_i\geq 1$. Suppose that exactly $d$ of $\alpha^{(1)},\ldots,\alpha^{(r)}$ are not vertical steps and denote them by $\mu^{(1)},\ldots,\mu^{(d)}$. Hence, $\mu$ can be uniquely decomposed as
\[
	\mu = U_h V^{\lambda_0} \, \mu^{(1)} V^{\lambda_1} \mu^{(2)} V^{\lambda_2} \cdots \mu^{(d)} V^{\lambda_d},
\]
where $\mu^{(i)}$ is an $m_i$-primary $\mathcal S$-path and $\lambda_0,\lambda_1,\ldots,\lambda_d$ are nonnegative integers. It is worth pointing out that $m_1,\ldots,m_{d-1}\geq 1$, $m_d\geq m$, and $\lambda_d = 0$ if $m\geq 2$.
The \emph{shape} of the path $\mu$ is the triple $(m,d,k)$, where $k = h - \lambda_0-\lambda_1-\cdots-\lambda_d$.
Observe that any up step in $\mu$ is the first step of the uniquely determined  primary $\mathcal S$-subpath of $\mu$. For this reason, we denote by $U_{k}^{m,d}$ the up step $U_k$ which is the first step of an $m$-primary $\mathcal S$-path whose shape is $(m,d,k)$.

Finally, if $(m,h) = (0,0)$ then the path $\mu$ is decomposable into $U_0$ and $\mu^{(0)}$, where $\mu^{(0)}$ is a $0$-primary $\mathcal S$-path from the set $\mathcal P_{\mathcal S}(0,n-1)$. To simplify the further consideration, we assume that the shape of $\mu$ in this case is $(0,0,0)$ and the first step $U_0$ of such path is denoted by $U_0^{0,0}$.

\begin{figure}[ht]
\begin{center}
	\includegraphics[width=35mm]{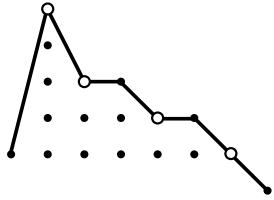}
	\caption{A $1$-primary $\mathcal S$-path running from $(0,0)$ to $(7,-1)$ without vertical steps. Lattice points determining a decomposition of the path are drawn by open circles.}
	\label{fig:3}
\end{center}
\end{figure}

For instance, if the set of steps $\mathcal S$ does not contain vertical step $V$, then 
any $m$-primary $\mathcal S$-path $\mu$ running from $(0,0)$ to $(n,-m)$ can be uniquely decomposed as
\[
	\mu = U_k \, \mu^{(1)} \mu^{(2)} \cdots \mu^{(d)},
\]
where $d = |\Pi(\mu)|$. See Fig.~\ref{fig:3}.

\begin{example}
\label{exam:decompos}
Let  $\pi = U_6 D_2 U_1 V D_1 V U_2 D_1 V^3 U_1 V^2$ be the path from Fig.~\ref{fig:1}. This path is decomposable as $U_6^{m,d} \pi^{(1)} \pi^{(2)} V \pi^{(3)} V \pi^{(4)}$, where $m=1$, $d=4$, and
\[
	\pi^{(1)} = D_2,
	\quad
	\pi^{(2)} = U_1 V D_1,
	\quad
	\pi^{(3)} = U_2 D_1 V^2,
	\quad
	\pi^{(4)} = U_1 V^2.
\]
The shape of $\pi$ is $(1,4,4)$ and $\lambda_0 = \lambda_1 = \lambda_4 = 0$, $\lambda_2=\lambda_3 = 1$.
\end{example}

\vspace{0.2cm}
For $k\in\mathbb Z$, let $\mathcal V_{\geq k}$ denote the set of steps $\mathcal V \cap \{S_h : h\geq k \}$. For $m,k,d\geq 0$, let $\mathcal H_{\mathcal V}(m,d,k)$ denote the set of all pairs $(h,\lambda)$, where $h$ is an integer such that $S_h \in \mathcal V_{\geq k}$ and $\lambda$ is a composition of $h-k$ into $d+1$ parts if $m\in\{0,1\}$, or into $d$ parts if $m\geq 2$ (zero parts are allowed in both cases).

\begin{proposition}
\label{prop:h}
For any $m,d,k\geq 0$, we have
\begin{equation}
	|\mathcal H_{\mathcal V}(m,d,k)| = \sum_{U_h\in \mathcal V_{\geq k} }
	\binom{h-k+d-\epsilon_m}{h-k},
\end{equation}
where $\epsilon_m = 0$ if $m\in\{0,1\}$, and $\epsilon_m = 1$ if $m\geq 2$.
\end{proposition}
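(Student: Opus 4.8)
The plan is to count $\mathcal H_{\mathcal V}(m,d,k)$ by conditioning on its first coordinate $h$. Every element of $\mathcal H_{\mathcal V}(m,d,k)$ is a pair $(h,\lambda)$ in which the admissible $\lambda$ depend only on $h$ together with the fixed data $m,d,k$, so the set decomposes as a disjoint union indexed by the admissible values of $h$, and hence $|\mathcal H_{\mathcal V}(m,d,k)| = \sum_h c(h)$, where the sum runs over those $h$ with $S_h\in\mathcal V_{\geq k}$ and $c(h)$ is the number of admissible compositions $\lambda$ for that $h$. Since $m,d,k\geq 0$ and $S_h\in\mathcal V_{\geq k}$ forces $h\geq k\geq 0$, each such $S_h$ is in fact an up step $U_h$, so the index set is exactly $\{U_h\in\mathcal V_{\geq k}\}$, matching the range of summation in the statement; moreover $h-k\geq 0$, so the compositions to be counted are of a genuine nonnegative integer.

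The key computational input is the standard stars-and-bars count: the number of weak compositions (i.e.\ with zero parts permitted) of a nonnegative integer $a$ into exactly $p$ parts equals $\binom{a+p-1}{p-1}=\binom{a+p-1}{a}$. I would apply this with $a=h-k$. By the definition of $\mathcal H_{\mathcal V}(m,d,k)$, when $m\in\{0,1\}$ the composition $\lambda$ has $p=d+1$ parts, so $c(h)=\binom{(h-k)+(d+1)-1}{h-k}=\binom{h-k+d}{h-k}$; when $m\geq 2$ it has $p=d$ parts, so $c(h)=\binom{(h-k)+d-1}{h-k}=\binom{h-k+d-1}{h-k}$. Writing $\epsilon_m=0$ for $m\in\{0,1\}$ and $\epsilon_m=1$ for $m\geq 2$, both cases are uniformly $c(h)=\binom{h-k+d-\epsilon_m}{h-k}$.

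Substituting this into the sum over admissible $h$ yields exactly the claimed identity. The only care needed is to confirm that the degenerate instances are handled by the usual binomial conventions; for example, when $m\geq 2$ and $d=0$ there is a single (empty) composition precisely when $h=k$ and none otherwise, which agrees with $\binom{h-k-1}{h-k}$ under $\binom{-1}{0}=1$ and $\binom{a-1}{a}=0$ for $a\geq 1$. I do not anticipate any genuine obstacle: the statement is a bookkeeping identity, and its entire content is matching the prescribed number of parts ($d+1$ versus $d$) to the corresponding stars-and-bars exponent, which is exactly what $\epsilon_m$ records.
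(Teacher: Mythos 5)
Your proposal is correct and follows essentially the same argument as the paper: partition $\mathcal H_{\mathcal V}(m,d,k)$ according to the first coordinate $h$, and count the weak compositions of $h-k$ into $d+1$ parts (for $m\in\{0,1\}$) or $d$ parts (for $m\geq 2$) by stars and bars, which is exactly the binomial coefficient $\binom{h-k+d-\epsilon_m}{h-k}$. The only difference is that you spell out the stars-and-bars identity and the degenerate case $d=0$, $m\geq 2$ explicitly, which the paper leaves implicit.
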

\begin{proof}
Recall that $N$ is the maximal integer such that $U_N\in\mathcal V$. Let us partition the set $\mathcal H_{\mathcal V}(m,d,k)$ into pairwise disjoint classes $A_k, A_{k+1}, \ldots, A_N$, where $A_h$ contains these pairs whose first element is $h$. If $U_h\notin \mathcal V_{\geq k}$ then $A_h$ is empty. If $U_h\in \mathcal V_{\geq k}$ then the size of $A_h$ is the number of compositions of $h-k$ into $d+1$ possibly zero parts, for $m\in\{0,1\}$, or into $d$ possibly zero parts, for $m\geq 2$, which is equal to the value of the binomial coefficient in the final formula.
\end{proof}

\section{Bijection between primary paths}
\label{sec:biject}

Let $N\geq 0$ and $\mathcal V$ be a subset of the set of steps $\Omega_N\cup\{V\}$ such that $U_N\in\mathcal V$ and $V\in\mathcal V$. Now we define the corresponding set of steps $\mathcal L$ which does not contain $V$. Namely, let
\begin{equation}
	\label{eq:defN}\mathcal L = ( \mathcal V \setminus \{V\} )\, \cup \, \{ U_N, U_{N-1}, \ldots, U_0, D_1\}.
\end{equation}

Let us define weighted paths. First, we define a weight function $w_{\mathcal V}$ over steps in primary $\mathcal L$-paths as follows.
Let $D_p$ be a down step from $\mathcal L$ and $U_{k}^{m,d}$ be an up step $U_k$ which is the first step of an $m$-primary $\mathcal L$-path whose shape is $(m,d,k)$. Then we set
\begin{align}
	\label{eq:weight}
	w_{\mathcal V}(D_p) = \left\{ 
\begin{array}{cl}
	|\mathcal V_{\geq -1}| & \mathrm{if}\,\, p = 1, \\ 
	1 & \mathrm{if}\,\, p \geq 2,
\end{array}		
	\right.
	\qquad
	w_{\mathcal V}(U_{k}^{m,d}) 
	= |\mathcal H_{\mathcal V}(m,d,k)|,
\end{align}
for all $m,d,k\geq 0$. We write $w(S)$ instead of $w_{\mathcal V}(S)$ for short if no confusion can arise. A \emph{weighted $m$-primary $\mathcal L$-path} is a pair $(\mu,v)$, where $\mu$ is an $m$-primary $\mathcal L$-path consisting of $n$ steps $\mu_1,\ldots,\mu_n$ and $v$ is a sequence of $n$ positive integers $v_1,\ldots,v_n$, called \emph{weights}, such that $1\leq v_i\leq w(\mu_i)$.
A \emph{weight} of a $\mu$, denoted by $w_{\mathcal V}(\mu)$, is a product of weights of its steps.
Let $\mathcal W^{\mathcal V}_{\mathcal L}(m,n)$ denote the set of all $w_{\mathcal V}$-weighted $m$-primary $\mathcal L$-paths running from $(0,0)$ to $(n,-m)$. It is clear that we have
\[
	|\mathcal W^{\mathcal V}_{\mathcal L}(m,n)| = \sum_{\mu \in \mathcal P_{\mathcal L}(m,n)} w_{\mathcal V}(\mu).
\]

\begin{theorem}
\label{th:biject}
For any $n\geq 1$ and $m\geq 0$, we have
\begin{equation}
	|\mathcal P_\mathcal V(m,n)| = |\mathcal W_{\mathcal L}(m,n)|.
\end{equation}
\end{theorem}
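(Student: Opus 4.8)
The plan is to prove the identity by constructing an explicit bijection
\[
	\Phi\colon \mathcal P_{\mathcal V}(m,n) \longrightarrow \mathcal W_{\mathcal L}(m,n),
\]
defined recursively through the canonical decomposition of primary paths from Section~\ref{sec:pre}, and to argue by induction on the number $n$ of non-vertical steps. Since $|\mathcal W_{\mathcal L}(m,n)| = \sum_{\mu\in\mathcal P_{\mathcal L}(m,n)} w_{\mathcal V}(\mu)$, exhibiting such a $\Phi$ immediately yields the claimed equality of cardinalities. The guiding principle is that every vertical step of a $\mathcal V$-path has to be absorbed into the weight of a non-vertical step of the associated $\mathcal L$-path, and the weights~\eqref{eq:weight} have been chosen precisely so that, at each node of the decomposition, the number of admissible ways to reintroduce vertical steps equals the weight of the corresponding $\mathcal L$-step.

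For the forward map, given $\mu\in\mathcal P_{\mathcal V}(m,n)$ with at least two non-vertical steps I would take its decomposition $\mu = U_h V^{\lambda_0}\,\mu^{(1)} V^{\lambda_1}\cdots \mu^{(d)} V^{\lambda_d}$ of shape $(m,d,k)$, where $k = h-\sum_{i}\lambda_i$. The image $\Phi(\mu)$ is formed by replacing the head $U_h$ together with its entire top-level vertical profile $(\lambda_0,\dots,\lambda_d)$ by a single $\mathcal L$-step $U_k$ of reduced slope $k$, by applying $\Phi$ recursively to each subpath $\mu^{(j)}$, and by attaching to the new $U_k$ a weight $v\in\{1,\dots,w(U_k^{m,d})\}$ that encodes the pair $(h,\lambda)$. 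This encoding is legitimate because, by Proposition~\ref{prop:h}, the set $\mathcal H_{\mathcal V}(m,d,k)$ of all such pairs --- an integer $h$ with $S_h\in\mathcal V_{\ge k}$ together with a composition $\lambda$ of $h-k$ into $d+1$ parts when $m\in\{0,1\}$ and into $d$ parts when $m\ge 2$ --- has cardinality exactly $w(U_k^{m,d})$, so fixing once and for all a bijective indexing of $\mathcal H_{\mathcal V}(m,d,k)$ by $\{1,\dots,w(U_k^{m,d})\}$ converts the data $(h,\lambda)$ into a single admissible weight. The base and degenerate nodes are treated directly and in the same spirit: a single down step $D_p$ with $p\ge 2$ maps to itself with weight $1$; a segment that descends by exactly one through a single non-vertical step followed by its trailing vertical run (the family $S_h V^{h+1}$ with $S_h\in\mathcal V_{\ge -1}$, which includes the bare $D_1$) collapses onto the single $\mathcal L$-step $D_1$, whose weight $|\mathcal V_{\ge -1}|$ counts exactly these possibilities; and the degenerate node $(m,h)=(0,0)$ is sent to $U_0^{0,0}$, whose weight $|\mathcal V_{\ge 0}|$ records the initial flat excursion.

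The first thing to verify is that $\Phi(\mu)$ is a genuine member of $\mathcal W_{\mathcal L}(m,n)$. The produced head $U_k$ lies in $\mathcal L$ because $0\le k\le N$ and $\mathcal L$ contains all of $U_0,\dots,U_N$ by~\eqref{eq:defN}, while any down step emitted is either $D_1$ or some $D_p\in\mathcal V\subseteq\mathcal L$; the number of recursively produced subpaths is again $d$, so the shape $(m,d,k)$ of the $\mathcal L$-head is consistent. The endpoint is preserved, since deleting the $\sum_i\lambda_i$ top-level vertical steps while simultaneously lowering the head slope from $h$ to $k=h-\sum_i\lambda_i$ leaves the terminal height at $-m$, and the recursion preserves the horizontal extent, so $\Phi(\mu)$ runs from $(0,0)$ to $(n,-m)$. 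Because each $\mu^{(j)}$ carries strictly fewer non-vertical steps than $\mu$, the induction hypothesis applies to it and the weight of $\Phi(\mu)$ is the product of the emitted weight and the weights of the images of the subpaths.

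For invertibility I would reverse the construction. A weighted $\mathcal L$-path has no vertical steps, so it decomposes canonically as $U_k\,\mu^{(1)}\cdots\mu^{(d)}$ or as one of the base-case steps; reading off its shape $(m,d,k)$ and consulting the stored weight $v$, the fixed indexing of Proposition~\ref{prop:h} returns the unique pair $(h,\lambda)\in\mathcal H_{\mathcal V}(m,d,k)$, whereupon one restores the true slope $h$ and threads the vertical runs $V^{\lambda_0},\dots,V^{\lambda_d}$ back in, recursing on the subpaths. The step I expect to be the main obstacle is checking that this reinsertion always produces a legitimate primary $\mathcal V$-path: after raising the head to slope $h$ and reintroducing the vertical runs, one must confirm that every point except possibly the last still lies on or above the $x$-axis, and that the composition-length bookkeeping ($d+1$ slots versus $d$ slots, equivalently the forced $\lambda_d=0$ when $m\ge 2$, encoded by $\epsilon_m$ in Proposition~\ref{prop:h}) matches on both sides, together with the height constraints $m_1,\dots,m_{d-1}\ge 1$ and $m_d\ge m$ noted in Section~\ref{sec:pre}. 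Once this compatibility is established in both directions, $\Phi$ and the reconstruction are mutually inverse, and the equality $|\mathcal P_{\mathcal V}(m,n)| = |\mathcal W_{\mathcal L}(m,n)|$ follows.
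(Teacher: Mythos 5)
Your proposal is correct and follows essentially the same route as the paper's proof: the paper defines $f_{m,n}\colon \mathcal W_{\mathcal L}(m,n)\to\mathcal P_{\mathcal V}(m,n)$ (your reconstruction map) and $g_{m,n}\colon \mathcal P_{\mathcal V}(m,n)\to\mathcal W_{\mathcal L}(m,n)$ (your $\Phi$), both built recursively on the canonical decomposition of Section~\ref{sec:pre}, with the weight of each up step serving, via Proposition~\ref{prop:h}, as an index into $\mathcal H_{\mathcal V}(m,d,k)$ encoding the pair $(h,\lambda)$, and with the same three base cases ($D_p\mapsto D_p$ for $p\geq 2$, $S_hV^{h+1}\leftrightarrow D_1$, $U_hV^h\leftrightarrow U_0$). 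The verification points you flag as remaining work (endpoint preservation, the forced $\lambda_d=0$ when $m\geq 2$, and compatibility of the decompositions on both sides under the maps) are precisely what the paper checks in Lemmas~\ref{lem:tov}--\ref{lem:biject}.
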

\begin{proof}
In the sequel, we define a map $f_{m,n}:\mathcal W_{\mathcal L}(m,n) \to \mathcal P_{\mathcal V}(m,n)$, for any $m\geq 0$ and $n\geq 1$. In Lemma~\ref{lem:biject}, given below, we show that this map is a bijection.
\end{proof}

\vspace{0.2cm}
\textbf{The map} $f_{m,n}:\mathcal W_{\mathcal L}(m,n)  \to \mathcal P_{\mathcal V}(m,n)$, for $m\geq 0$, $n\geq 1$.

\vspace{0.2cm}
Let $(\mu, v)$ be a weighted path from $\mathcal W_{\mathcal L}(m,n)$ and $v=(v_1,\ldots,v_n)$. If $n=1$ and $m=0$, then $\mu=U_0$, and $1\leq v_1 \leq |\mathcal H_{\mathcal V}(0,0,0)|$. Suppose that $(h,\lambda)$ is the $v_1$th pair from $\mathcal H_{\mathcal V}(0,0,0)$. Note that $\lambda = (h)$. Then we set 
\[
	f_{0,1}((U_0,v))
	\overset{\mathrm{def}}{=\joinrel=} 
	U_h V^{h}.
\]
If $n=1$ and $m=1$, then $\mu=D_1$, and $1\leq v_1 \leq |\mathcal V_{\geq -1}|$.
Suppose that $S_h$ is the $v_1$th step from $\mathcal V_{\geq -1}$. We set
\[
	f_{1,1}((D_1,v))
	\overset{\mathrm{def}}{=\joinrel=} 
	S_h V^{h+1}.
\]
If $n=1$ and $m\geq 2$, then $\mu = D_m$ and $v_1 = 1$. We set
\[
	f_{m,1}((D_m,v)) 
	\overset{\mathrm{def}}{=\joinrel=} 
	D_m.
\]
If $n\geq 2$ and $m\geq 0$, then the first step of $\mu$ is an up step and the entire path can be decomposed into some number of shorter primary paths.
Suppose that the shape of  $\mu$ is $(m,d,k)$ and
\begin{equation*}
	\mu = U_k^{m,d} \, \mu^{(1)} \mu^{(2)} \cdots \mu^{(d)}.
\end{equation*}
The weight of the first step is $v_1$ which is an integer from $\{ 1,2,\ldots,$ $ |\mathcal H_{\mathcal V}(m,d,k)| \}$, by \eqref{eq:weight}. Suppose that $(h,\lambda)$ is the $v_1$th pair from $\mathcal H_{\mathcal V}(m,d,k)$.
Let $\lambda = (\lambda_0,\lambda_1,\ldots,\lambda_d)$. We set
\begin{equation}
	f_{m,n}((\mu,v)) 
	\overset{\mathrm{def}}{=\joinrel=} 
	U_h \,  V^{\lambda_0}\, f(\mu^{(1)}) V^{\lambda_1} f(\mu^{(2)}) V^{\lambda_2}
	\cdots f(\mu^{(d)}) V^{\lambda_d},
\end{equation}
where $f(\mu^{(i)}) \equiv f_{m_i,n_i}((\mu^{(i)},v^{(i)}))$ for some $m_i,n_i$ depending on $\mu^{(i)}$, and $v^{(i)}$ is the proper part of the sequence $v$, for $i=1,2,\ldots,d$.

\begin{lemma}
\label{lem:tov}
If $(\mu,v)\in\mathcal W_{\mathcal L}(m,n)$ then $f_{m,n}((\mu,v))\in\mathcal P_{\mathcal V}(m,n)$, for any $m\geq 0$ and $n\geq 1$.
\end{lemma}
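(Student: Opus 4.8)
The plan is to prove this by strong induction on $n$, following the recursive structure of the map $f_{m,n}$. For a given $(\mu,v)\in\mathcal W_{\mathcal L}(m,n)$ I must check three things about the word $f_{m,n}((\mu,v))$: that every letter is a step of $\mathcal V$, that the word traces a path from $(0,0)$ to $(n,-m)$, and that all of its lattice points except possibly the last lie weakly above the $x$-axis. The recursion bottoms out correctly and the weight sequence $v$ splits into valid subsequences $v^{(i)}$ (with $1\le v^{(i)}_\ell\le w(\mu^{(i)}_\ell)$ inherited from $v$), so that each $(\mu^{(i)},v^{(i)})\in\mathcal W_{\mathcal L}(m_i,n_i)$ with $n_i<n$; this makes the induction hypothesis applicable and is otherwise routine.

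First I would dispose of the base cases $n=1$. In each of the three formulas for $f_{0,1}$, $f_{1,1}$, and $f_{m,1}$ with $m\ge 2$, the non-vertical letter lies in $\mathcal V$: for $m=0$ because $(h,\lambda)\in\mathcal H_{\mathcal V}(0,0,0)$ forces $U_h\in\mathcal V_{\geq 0}$, for $m=1$ because $S_h$ is drawn from $\mathcal V_{\geq -1}$, and for $m\ge 2$ because the only single-step $\mathcal L$-path ending at $-m$ uses $D_m$, which (being distinct from $D_1$ and from every $U_k$) must already belong to $\mathcal V$. In each case there is exactly one forward step, and the trailing vertical run $V^{h}$, $V^{h+1}$, or the empty run brings the path down to height $-m$ while keeping every earlier point at height $\ge 0$.

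For the inductive step, write $\mu=U_k^{m,d}\mu^{(1)}\cdots\mu^{(d)}$ with shape $(m,d,k)$, and let $(h,\lambda)$ with $\lambda=(\lambda_0,\dots,\lambda_d)$ be the $v_1$th pair of $\mathcal H_{\mathcal V}(m,d,k)$. Then $S_h\in\mathcal V_{\geq k}\subseteq\mathcal V$, and by definition of $\mathcal H_{\mathcal V}$ the tuple $\lambda$ is a composition of $h-k$, so $\sum_{j=0}^d\lambda_j=h-k$. By the induction hypothesis each $f(\mu^{(i)})$ is an $m_i$-primary $\mathcal V$-path with $n_i$ forward steps, so the whole output uses only $\mathcal V$-steps and has $1+\sum_i n_i=n$ forward steps. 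For the terminal height I would add up the vertical contributions: the leading $U_h$ together with the runs $V^{\lambda_j}$ contribute $h-\sum_j\lambda_j=k$, each $f(\mu^{(i)})$ contributes $-m_i$, and since the $\mathcal L$-path $\mu$ ends at $-m$ we have $k-\sum_i m_i=-m$; hence the output ends at $(n,-m)$.

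The main obstacle is the primary condition, which I would settle by a height computation. The key inequality is that the starting height of the block $f(\mu^{(i)})$ in the output, namely $h-\sum_{j<i}\lambda_j-\sum_{j<i}m_j$, exceeds the starting height $k-\sum_{j<i}m_j$ of $\mu^{(i)}$ inside $\mu$ by the amount $(h-k)-\sum_{j<i}\lambda_j\ge 0$, since the $\lambda_j$ are nonnegative and sum to $h-k$. Because $\mu$ is $\mathcal L$-primary, every $\mu^{(i)}$ starts at height $\ge 0$, hence so does every $f(\mu^{(i)})$; combined with the fact that $f(\mu^{(i)})$ is $m_i$-primary (all its points except the last sitting at or above its own starting height) and that each run $V^{\lambda_i}$ descends \emph{exactly} to the starting height of the next block, every point up to and including the start of $f(\mu^{(d)})$ stays $\ge 0$. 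The one delicate spot is the trailing run $V^{\lambda_d}$: here I would invoke that $\lambda_d=0$ when $m\ge 2$, so the path ends precisely at the last point of $f(\mu^{(d)})$; whereas for $m\in\{0,1\}$ the final descent passes through heights $\lambda_d-m,\lambda_d-m-1,\dots,-m$, all of which are $\ge 0$ except the last, which equals $-m$. This confirms the path lies weakly above the $x$-axis except possibly at its endpoint, completing the induction.
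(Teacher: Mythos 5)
Your proof is correct and takes the same route as the paper: induction on $n$ along the recursive definition of $f_{m,n}$, using that $\lambda$ is a composition of $h-k$ (so the added vertical steps exactly offset replacing $U_k$ by $U_h$, giving the endpoint $(n,-m)$) and that $\lambda_d=0$ when $m\geq 2$ guarantees the last step is non-vertical. The paper's proof is just a terse sketch of this argument; your write-up supplies the base cases and the explicit height bookkeeping for the primary condition, which the paper leaves implicit.
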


\begin{proof}
Suppose that $n\geq 2$. Observe that $f$ changes step $U_k$ to $U_h\in\mathcal V$, where $h\geq k$, and adds $\lambda_0+\cdots+\lambda_d$ vertical steps $V$ to the original path. The pair $(h,\lambda)$ belongs to $\mathcal H_{\mathcal V}(m,d,k)$ which implies that $h-k = \lambda_0+\cdots+\lambda_d$. Using the induction on $n$ we show that $f_{m,n}((\mu,v))$ runs from $(0,0)$ to $(n,-m)$. The condition that $\lambda_d = 0$ for $m\geq 2$ ensures that the last step of the resulting path is not the vertical one.
\end{proof}

\begin{figure}[ht]
\begin{center}
	\includegraphics[width=80mm]{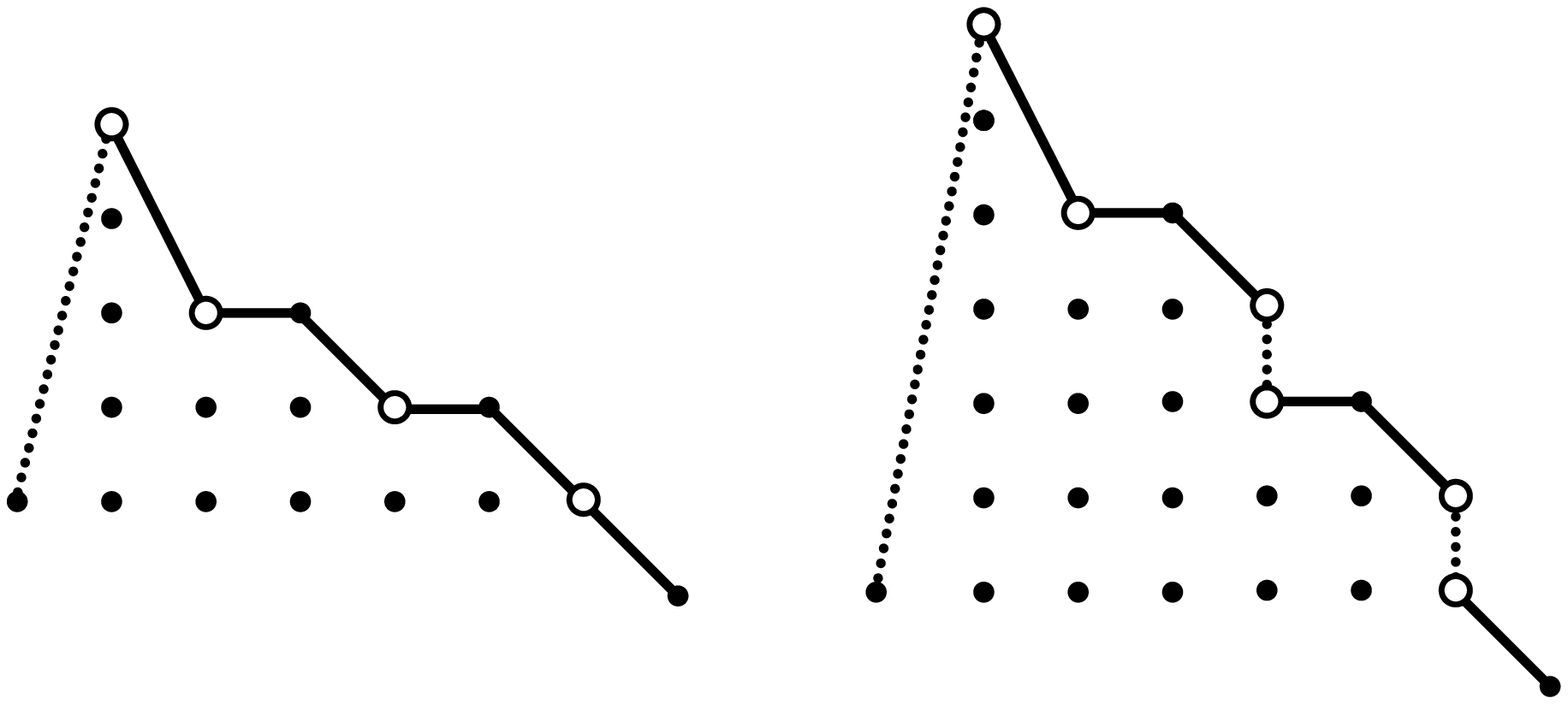}
	\caption{The map $f$ changes $U_4$ to $U_6$ and adds two vertical steps between certain subpaths. The map $g$ changes $U_6$ to $U_4$ and removes corresponding vertical steps.}
	\label{fig:mapf}
\end{center}
\end{figure}

\begin{example}
\label{exam:f}
Let us consider lattice paths which consist of steps from the set $\mathcal V=\{V, U_6, U_5,$ $ \ldots, U_0, D_1, D_2 \}$ and let $\mathcal L = \mathcal V\setminus \{V\}$.
Let $(\mu,v) \in \mathcal W_{\mathcal L}(1,7)$, where $\mu = U_4 D_2 U_0 D_1 U_0 D_1 D_1$, see Fig.~\ref{fig:3} and Fig.~\ref{fig:mapf}. Suppose that $v=(v_1,\ldots,v_7)$.
The path is decomposable into $U_4$ and $\mu^{(1)}, \ldots, \mu^{(4)}$, where
\[
	\mu^{(1)} = D_2,
	\quad
	\mu^{(2)} = U_0 D_1,
	\quad
	\mu^{(3)} = U_0 D_1,
	\quad
	\mu^{(4)} = D_1.
\]
By the definition of the weight function $w$, $v_1\in\{1,2,\ldots,|\mathcal H_{\mathcal V}(1,4,4)|\}$ and $|\mathcal H_{\mathcal V}(1,4,4)| = 21$, by Proposition~\ref{prop:h}.
Suppose that the $v_1$th pair from the set $\mathcal H_{\mathcal V}(1,4,4)$ is $(6,\lambda)$, where $\lambda=(0,0,1,1,0)$ is a composition of $2$ into $5$ parts. We have
\[
	f_{1,7}((\mu,v)) = U_6 V^{0} f(\mu^{(1)}) V^0 f(\mu^{(2)}) V^1 f(\mu^{(3)}) V^1 f(\mu^{(4)}) V^0.
\]
The path $U_6 V^{0} \mu^{(1)} V^0 \mu^{(2)} V^1 \mu^{(3)} V^1 \mu^{(4)} V^0$ is given in Fig.~\ref{fig:mapf}. The final path $f((\mu,v))$ for certain weight vector $v$ is given in Fig.~\ref{fig:1}.
\end{example}

\textbf{The map} $g_{m,n}:\mathcal P_{\mathcal V}(m,n)\to \mathcal W_{\mathcal L}(m,n)$, for $m\geq 0$, $n\geq 1$.

\vspace{0.2cm}
Let $\pi$ be a path from $\mathcal P_{\mathcal V}(m,n)$.
If $n=1$ and $m=0$, then $\pi = U_h V^h$ for certain $U_h\in\mathcal V$.
Suppose that the pair $(h,\lambda)$, where $\lambda = (h)$, is the $v_1$th pair in $\mathcal H_{\mathcal V}(0,0,0)$ is $(h,\lambda)$. Then we set
\[
	g_{0,1}(U_h V^h) 
	\overset{\mathrm{def}}{=\joinrel=}
	(U_0, (v_1)).
\]
If $n=1$ and $m=1$, then $\pi = S_h V^{h+1}$ for certain $S_h\in\mathcal V_{\geq -1}$. Suppose that $S_h$ is the $v_1$th step in $\mathcal V_{\geq -1}$. We set
\[
	g_{1,1}(U_h V^{h+1}) 
	\overset{\mathrm{def}}{=\joinrel=}
	(D_1, (v_1)).
\]
If $n=1$ and $m\geq 2$, then $\pi = D_m$. We set
\[
	g_{m,1}(D_m) 
	\overset{\mathrm{def}}{=\joinrel=}
	(D_m, (1)).
\]
If $n\geq 2$ then the first step of $\pi$ is an up step and the entire path $\pi$ can be decomposed into some number of shorter primary paths.
 Suppose that the shape of $\pi$ is $(m,d,k)$ and it can be decomposed as
\[
	\pi = U_h V^{\lambda_0} \pi^{(1)} V^{\lambda_1} \pi^{(2)} V^{\lambda_2} \cdots
	\pi^{(d)} V^{\lambda_d},
\]
Let $\lambda = (\lambda_0,\ldots,\lambda_d)$. Suppose that $(h,\lambda)$ is the $v_1$th pair in $\mathcal H_{\mathcal V}(m,d,k)$. We set
\begin{equation}
	g_{m,n}(\pi) \overset{\mathrm{def}}{=\joinrel=} 
	\left( U_k \, g\big(\pi^{(1)}\big)  g\big(\pi^{(2)}\big) \cdots g\big(\pi^{(d)}\big) , v \right),
\end{equation}
where $v=(v_1,v_2,\ldots,v_r)$ and $v_2,\ldots,v_r$ depend on $g(\pi^{(1)}),\ldots,g(\pi^{(d)})$.

\begin{lemma}
If $\pi\in\mathcal P_{\mathcal V}(m,n)$ then $g_{m,n}(\pi)\in\mathcal W_{\mathcal L}(m,n)$, for any $m\geq 0$ and $n\geq 1$.
\end{lemma}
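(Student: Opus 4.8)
The plan is to prove the statement by induction on $n$, mirroring exactly the recursive definition of $g_{m,n}$ and the argument used for $f$ in Lemma~\ref{lem:tov}. For a path $\pi\in\mathcal P_{\mathcal V}(m,n)$ with $g_{m,n}(\pi)=(\mu,v)$, I must verify three things: that $\mu$ is an $\mathcal L$-path running from $(0,0)$ to $(n,-m)$ all of whose points except possibly the last lie on or above the $x$-axis; that $\mu$ has shape $(m,d,k)$, so that the step labels $U_k^{m,d}$ read off to define the weights are the correct ones; and that every weight satisfies $1\le v_i\le w(\mu_i)$.

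First I would dispose of the three base cases $n=1$ directly. If $m=0$ then $g_{0,1}(U_hV^h)=(U_0,(v_1))$ with $U_0\in\mathcal L$ running from $(0,0)$ to $(1,0)$, and $v_1$ is by construction the index of the pair $(h,(h))$ in $\mathcal H_{\mathcal V}(0,0,0)$, so $1\le v_1\le |\mathcal H_{\mathcal V}(0,0,0)|=w(U_0^{0,0})$ by \eqref{eq:weight}. If $m=1$ then $g_{1,1}(S_hV^{h+1})=(D_1,(v_1))$ with $D_1\in\mathcal L$ running from $(0,0)$ to $(1,-1)$, and $v_1$ is the index of $S_h$ in $\mathcal V_{\ge -1}$, so $1\le v_1\le|\mathcal V_{\ge -1}|=w(D_1)$. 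If $m\ge 2$ then $\pi=D_m$ forces $D_m\in\mathcal V\setminus\{V\}\subseteq\mathcal L$, the step runs from $(0,0)$ to $(1,-m)$, and $w(D_m)=1$ so $v_1=1$ is in range.

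For the inductive step $n\ge 2$ I would use the decomposition $\pi=U_hV^{\lambda_0}\pi^{(1)}V^{\lambda_1}\cdots\pi^{(d)}V^{\lambda_d}$ of shape $(m,d,k)$ and apply the induction hypothesis to each $\pi^{(i)}$, whose image $g(\pi^{(i)})$ has $\mathcal L$-component $\mu^{(i)}$ running from $(0,0)$ to $(n_i,-m_i)$, where $\pi^{(i)}$ ends at $(n_i,-m_i)$. Then $\mu=U_k\,\mu^{(1)}\cdots\mu^{(d)}$. Since $0\le k\le N$ we have $U_k\in\mathcal L$. The first weight is legal because $(h,\lambda)\in\mathcal H_{\mathcal V}(m,d,k)$: indeed $U_h\in\mathcal V$ and $h\ge k$ give $S_h\in\mathcal V_{\ge k}$, while $k=h-\lambda_0-\cdots-\lambda_d$ makes $\lambda$ a composition of $h-k$ into $d+1$ parts, reducing to $d$ free parts when $m\ge 2$ because then $\lambda_d=0$ by the decomposition in Section~\ref{sec:pre}; hence $1\le v_1\le|\mathcal H_{\mathcal V}(m,d,k)|=w(U_k^{m,d})$. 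For the endpoint, counting non-vertical steps gives $1+\sum_i n_i=n$, and using $h-k=\lambda_0+\cdots+\lambda_d$ the $y$-displacements of $\pi$ and of $\mu$ coincide, both equal to $k-\sum_i m_i=-m$; so $\mu$ runs from $(0,0)$ to $(n,-m)$. The remaining weights $v_2,\ldots,v_r$ are in range by the induction hypothesis applied to the subpaths.

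The main obstacle is the bookkeeping that ties the $\mathcal V$-side decomposition to the $\mathcal L$-side shape. Precisely, I must confirm that the data $(h,\lambda)$ extracted from $\pi$ really is an element of $\mathcal H_{\mathcal V}(m,d,k)$ with the prescribed number of parts (and $\lambda_d=0$ when $m\ge 2$), and, dually, that deleting the $h-k$ top-level vertical steps and lowering $U_h$ to $U_k$ leaves an $\mathcal L$-path whose own point set $\Pi(\mu)$ again has exactly $d$ elements, so that $\mu$ genuinely has shape $(m,d,k)$ and the weight ranges match level by level. Here primality of $\mu$ is inherited from the subpaths, since $\mathcal L$ contains no vertical step and the concatenation with $U_k$ preserves the staircase structure that keeps all intermediate points on or above the $x$-axis. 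This is the same structural matching that will later show $g$ is inverse to $f$ in Lemma~\ref{lem:biject}; for the present statement only the ``lands in the codomain'' half is required.
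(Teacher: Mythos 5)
Your proposal is correct and follows essentially the same route as the paper: induction on $n$ mirroring the argument for $f$ in Lemma~\ref{lem:tov}, with the endpoint preserved because lowering $U_h$ to $U_k$ compensates for the $h-k$ deleted vertical steps. In fact your write-up is more thorough than the paper's own three-sentence sketch, since you additionally verify the base cases, the weight bounds $1\le v_i\le w(\mu_i)$ via $(h,\lambda)\in\mathcal H_{\mathcal V}(m,d,k)$, and the shape-matching on the $\mathcal L$-side, none of which the paper spells out.
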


\begin{proof}
Suppose that $n\geq 2$. As in the proof of Lemma~\ref{lem:tov}, the function $g$ changes step $U_h$ to $U_k\in\mathcal L$ and removes exactly $h-k$ vertical steps from the original path. Thus the resulting path ends at the same lattice point as the original one does. Using the induction we also show that the resulting path does not contain vertical steps.
\end{proof}

\begin{example}
\label{exam:g}
As in the previous example, let us consider lattice paths which consist of steps from the set $\mathcal V=\{V, S_6, S_5, \ldots S_{-2} \}$ and let $\mathcal L = \mathcal V\setminus \{V\}$.
Let $\pi \in \mathcal P_{\mathcal V}(1,7)$ be the path given in Fig.~\ref{fig:1}.
The decomposition of $\pi$ is given in Example~\ref{exam:decompos}. The shape of $\pi$ is $(1,4,4)$ and $\lambda=(0,0,1,1,0)$. In Example~\ref{exam:f}, we assume that the pair $(6,\lambda)$ is the $v_1$th element from $\mathcal H_{\mathcal V}(1,4,4)$. Thus $g_{1,7}(\pi) = (\mu,v)$, where $v=(v_1,\ldots,v_7)$ and
$\mu = U_4 \, g(\pi^{(1)}) \, g(\pi^{(2)}) \cdots g(\pi^{(4)})$.
The final path $g(\pi)$ for certain weight vector $v$ is given in Fig.~\ref{fig:3}.
\end{example}

\begin{lemma}
\label{lem:biject}
We have $f_{m,n}^{-1} = g_{m,n}$ for any $m\geq 0$ and $n\geq 1$.
\end{lemma}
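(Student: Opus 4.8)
The plan is to prove that $f_{m,n}$ and $g_{m,n}$ are mutually inverse by showing $g_{m,n}\circ f_{m,n}=\mathrm{id}$ and $f_{m,n}\circ g_{m,n}=\mathrm{id}$, both by induction on $n$, simultaneously over all $m\ge 0$. Since $f$ and $g$ are built from parallel, mirror-image recursions that branch on the same three base cases and share a single recursive case, the two identities reduce to essentially the same bookkeeping; so I would carry out one direction in full and indicate the symmetric one.

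First I would dispose of the base case $n=1$, where both maps are given by explicit closed formulas in each regime $m=0$, $m=1$, $m\ge 2$. For $m=0$, $f$ sends $(U_0,(v_1))$ to $U_hV^h$ using the $v_1$th pair $(h,(h))$ of $\mathcal H_{\mathcal V}(0,0,0)$, while $g$ reads $h$ off from $U_hV^h$ and returns the index $v_1$ of that pair; because the enumeration of $\mathcal H_{\mathcal V}(0,0,0)$ is a fixed bijection between indices $1,\dots,|\mathcal H_{\mathcal V}(0,0,0)|$ and pairs, these operations invert each other. The cases $m=1$ (via the fixed enumeration of $\mathcal V_{\ge -1}$) and $m\ge 2$ (trivially, $D_m\leftrightarrow (D_m,(1))$) are identical in spirit. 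Here I would record that the weight bounds $1\le v_1\le |\mathcal H_{\mathcal V}(m,d,k)|$ and $1\le v_1\le|\mathcal V_{\ge -1}|$ coming from \eqref{eq:weight} are exactly what make these indexings exhaustive and range-consistent.

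For the inductive step $n\ge 2$, I fix $(\mu,v)\in\mathcal W_{\mathcal L}(m,n)$ of shape $(m,d,k)$ with $\mu=U^{m,d}_k\mu^{(1)}\cdots\mu^{(d)}$, let $(h,\lambda)$ be the $v_1$th pair of $\mathcal H_{\mathcal V}(m,d,k)$, and set $\pi=f_{m,n}((\mu,v))=U_hV^{\lambda_0}f(\mu^{(1)})V^{\lambda_1}\cdots f(\mu^{(d)})V^{\lambda_d}$. The crux is to check that the canonical decomposition of $\pi$ via the leftmost integer lattice points $\Pi(\pi)$ from Section~\ref{sec:pre} reproduces exactly this displayed factorization: the first step is $U_h$, there are precisely $d$ non-vertical subpaths, these are the primary $\mathcal V$-paths $f(\mu^{(i)})$ (primary by Lemma~\ref{lem:tov}), and the intervening vertical runs have lengths $\lambda_0,\dots,\lambda_d$. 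Granting this, $g_{m,n}(\pi)$ recovers the shape $(m,d,k)$ — with $k=h-(\lambda_0+\cdots+\lambda_d)$, since $(h,\lambda)\in\mathcal H_{\mathcal V}(m,d,k)$ forces $\lambda$ to be a composition of $h-k$ — hence restores the first step as $U_k$ and the first weight as the index $v_1$ of $(h,\lambda)$. The induction hypothesis gives $g(f(\mu^{(i)}))=(\mu^{(i)},v^{(i)})$, and reassembling yields $(\mu,v)$, establishing $g_{m,n}\circ f_{m,n}=\mathrm{id}$; the reverse identity follows by running the same argument backwards.

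The main obstacle is precisely this decomposition-compatibility claim: one must verify that inserting the runs $V^{\lambda_i}$ between the primary subpaths neither creates nor destroys any of the leftmost integer lattice points defining $\Pi(\pi)$, so that the canonical parse used by $g$ agrees step-for-step with the construction performed by $f$. I would argue this from the uniqueness of the primary-path decomposition stated in Section~\ref{sec:pre}, together with the fact that each $f(\mu^{(i)})$ begins with an up step and stays weakly above its starting level until its final descent; this places the boundaries between $U_h$, the vertical runs, and the subpaths exactly at the integer points singled out by the rule $x_i=\min\{x:\pi\text{ passes through }(x,\cdot)\}$. The constraint that $\lambda_d=0$ for $m\ge 2$, together with the placement rules for $m\in\{0,1\}$, then guarantees that the parse terminates correctly at the endpoint $(n,-m)$.
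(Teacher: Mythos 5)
Your proposal is correct and follows essentially the same route as the paper's proof: induction on $n$ with explicit verification of the $n=1$ cases, and for $n\geq 2$ the key step of matching the canonical decomposition of $\pi=f_{m,n}((\mu,v))$ (via $\Pi(\pi)$) against the factorization $U_hV^{\lambda_0}f(\mu^{(1)})V^{\lambda_1}\cdots f(\mu^{(d)})V^{\lambda_d}$, which the paper settles by the same two observations you invoke (the images $f(\mu^{(i)})$ are primary $\mathcal V$-paths, and no primary $\mathcal V$-path begins with a vertical step, forcing the vertical-run lengths to agree). One slight imprecision: a primary subpath need not begin with an up step (e.g.\ it can be a single step $D_2$, as in Example~\ref{exam:decompos}); what your parse-compatibility argument actually needs, and what holds, is only that it never begins with a vertical step.
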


\begin{proof}
We need to show that $g(f((\mu,v))) = (\mu,v)$ for any weighted path $(\mu,v)\in\mathcal W_{\mathcal L}(m,n)$ and $f(g(\pi)) = \pi$ for any $\pi\in\mathcal P_{\mathcal V}(m,n)$.
Simple verification shows that the claim is true for $n=1$. Let us prove the first statement for $n\geq 2$. Let $\mu = U_{k}^{m,d} \mu^{(1)} \cdots \mu^{(d)}$ and $v=(v_1,\ldots,v_n)$. Assume that $(h,\lambda)$ is the $v_1$th pair from $\mathcal H_{\mathcal V}(m,d,k)$ and $\lambda=(\lambda_0,\ldots,\lambda_d)$. On the one hand, by the definition of $f$, the path $\pi = f_{m,n}((\mu,v))$ can be rewritten as
\[
	\pi = 
	U_h \,  V^{\lambda_0}\, f(\mu^{(1)}) V^{\lambda_1} f(\mu^{(2)}) V^{\lambda_2}
	\cdots f(\mu^{(d)}) V^{\lambda_d},
\]
On the other hand, $\pi$ can be decomposed as 
\[
	\pi = U_h V^{\rho_0} \pi^{(1)} V^{\rho_1} \pi^{(2)} V^{\rho_2} \cdots
	\pi^{(t)} V^{\rho_t},
\]
for some primary $\mathcal V$-paths $\pi^{(1)},\ldots, \pi^{(t)}$. 
First, we need to show two statements: (i) $d=t$, $\pi^{(i)} = f(\mu^{(i)})$ for $i=1,2,\ldots,d$, and (ii) $(\lambda_0,\ldots,\lambda_d) = (\rho_0,\ldots,\rho_d)$.
The first one is due to the definition of the function $f$. The resulting path $\pi = f_{m,n}((\mu,v))$ is the concatenation of paths $f(\pi^{(1)}), \ldots, f(\pi^{(d)})$, which are primary $\mathcal V$-paths, and some number (possibly zero) of vertical steps $V$ between these shorter primary subpaths. 
The second condition follows from the observation that 
any primary $\mathcal V$-path does not begin with a vertical step, thus $\lambda_i = \rho_i$ for $i=0,1,\ldots,d$.

Next, under the assumption at the beginning of the proof, $(h,\lambda)$ is the $v_1$th pair from $\mathcal H_{\mathcal V}(m,d,k)$. Thus, by the definition of the function $g$, the resulting path $g_{m,n}(\pi)$ is
\[
	g_{m,n}(\pi) = \Big( 
	U_{k}^{m,d} g(f(\mu^{(1)})) g(f(\mu^{(2)})) \cdots g(f(\mu^{(d)})),
	v
	\Big),
\]
where $v=(v_1,\ldots,v_n)$.
Using the induction on $n$ we show that $g(f(\mu^{(i)})) = \mu^{(i)}$ which ends the proof of the first statement.

The proof of $f(g(\pi)) = \pi$ goes in much the same way.
\end{proof}

\section{Relations between primary and free paths}
\label{sec:properties}

In this section we establish a relation between primary and free paths that contain vertical steps. Throughout the section, we fix $\mathcal S$ to be a subset of $\Omega_N\cup\{V\}$ such that $U_N \in \mathcal S$ and $N\geq 0$. Also, we set $\mathcal V = \mathcal S \cup \{V\}$ and $\mathcal N = \mathcal S \setminus \{V\}$.

Let $a$ be a sequence of $n$ integers $a_1,\ldots,a_n$. A partial sum of $a$ is the sum $a_1 + \cdots + a_k$, for $1\leq k\leq n$.
Raney \cite{Raney} shows that there is only one cyclic-shift $a' = (a_{k}, a_{k+1}, \ldots, a_n, a_1, \ldots, a_{k-1})$ of $a$ such that any partial sum of $a'$ is positive (see also \cite[p. 360]{Concrete}). This lemma appears in the literature also as the cycle lemma \cite{Dershowitz3}.
For our purposes, we reformulate this lemma.

\begin{lemma}[Raney lamma \cite{Raney}]
\label{lem:raney}
Let $b=(b_1,\ldots,b_n)$ be a sequence of integers whose sum is $-1$.
There is only one cyclic-shift $b'$ of $b$ such that every partial sum of $b'$ except the total sum is nonnegative.
\end{lemma}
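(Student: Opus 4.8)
The plan is to deduce the statement directly from the classical Raney lemma cited just above, rather than reprove the cycle lemma from scratch; the phrase ``we reformulate this lemma'' signals exactly this reduction. The cited version concerns a sequence whose entries sum to $+1$ together with the event ``every partial sum is positive,'' whereas here the sum is $-1$ and the relevant event is ``every partial sum except the total is nonnegative.'' So first I would exhibit a transformation carrying the present setting to the cited one. The natural candidate is \emph{reverse-and-negate}: given $b=(b_1,\ldots,b_n)$ with $\sum_i b_i=-1$, set $a=(a_1,\ldots,a_n)$ with $a_i=-b_{n+1-i}$, so that $\sum_i a_i = 1$ and the cited lemma applies to $a$.

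Second, I would check that this transformation matches the two events term by term. Writing $P^x_j$ for the $j$th partial sum of a length-$n$ sequence $x$, with $P^x_0=0$, a one-line computation gives, for the reverse-negation $\hat x$ of any sequence $x$ with $P^x_n=-1$, the identity $P^{\hat x}_j = 1 + P^x_{n-j}$ for $j=1,\ldots,n$. Since all quantities are integers, $P^{\hat x}_j>0$ holds if and only if $P^x_{n-j}\ge 0$; letting $j$ run over $1,\ldots,n$ converts ``every partial sum of $\hat x$ is positive'' into ``$P^x_i\ge 0$ for $i=1,\ldots,n-1$,'' which is precisely the nonnegativity-except-the-total condition on $x$. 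This is the step that cleanly reconciles \emph{positive} with \emph{nonnegative} and also explains why the single excepted partial sum is the total (equal to $-1$) and not some interior one.

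Third, I would lift this per-sequence equivalence to a statement about cyclic shifts. The key point is that reverse-and-negate sends the set of $n$ cyclic shifts of $b$ bijectively onto the set of $n$ cyclic shifts of $a$: negation acts entrywise and commutes with everything, while the reversal of a rotation of a word is again a rotation of the reversed word. Combining this bijection with the term-by-term equivalence above, the number of cyclic shifts of $b$ whose partial sums except the total are nonnegative equals the number of cyclic shifts of $a$ all of whose partial sums are positive, and the latter is exactly $1$ by the cited Raney lemma. This delivers existence and uniqueness simultaneously.

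The main obstacle I anticipate is bookkeeping rather than anything conceptual: ensuring that reverse-and-negate acts correctly on rotation \emph{classes} so that the correspondence is a genuine bijection with no shift double counted, and keeping track that the lone excepted partial sum remains the total throughout. If that bookkeeping grows unwieldy, I would fall back on a self-contained argument. Among the partial sums $P_0=0,P_1,\ldots,P_{n-1}$ of $b$ itself, let $k\in\{0,\ldots,n-1\}$ be the \emph{first} index attaining the minimum; one verifies that the rotation beginning immediately after the $k$th entry is the unique good shift, the per-period drift of $-1$ being exactly what forces uniqueness (a second minimizer would contradict either the minimality of $P_k$ or the choice of $k$ as the earliest one).
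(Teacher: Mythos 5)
Your proposal is correct and follows essentially the same route as the paper: the paper's proof is precisely the reverse-and-negate reduction $(b_1,\ldots,b_n)\mapsto(-b_n,\ldots,-b_1)$ to the classical Raney lemma, stated in a single sentence. Your version merely fills in the details the paper leaves implicit (the partial-sum identity $P^{\hat x}_j = 1+P^x_{n-j}$, the integer-valued positive/nonnegative conversion, and the bijection on cyclic shifts), which makes it a more careful write-up of the same argument.
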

\begin{proof}
Observe that if we rearrange terms of $b$ in reverse order and we negate them, then we obtain the sequence $(-b_n,-b_{n-1},\ldots,-b_1)$ whose sum is $+1$, and from the Raney lemma there is only one cyclic shift of such modified sequence which has the property that any its partial sum except the total sum is nonnegative.
\end{proof}

Therefore, the Raney lamma implies that
\begin{equation}
	\label{eq:formpn}
	|\mathcal P_{\mathcal N}(1,n)| = \frac{1}{n} |\mathcal F_{\mathcal N}(1,n)|,
	\qquad (n\geq 1).
\end{equation}
We extend this relation between $1$-primary and free $\mathcal N$-paths to the corresponding families of $\mathcal V$-paths with vertical steps.

\begin{theorem}
For any $n\geq 1$ we have
\begin{equation}
	\label{eq:Pv1n}
	|\mathcal P_{\mathcal V}(1,n)| = \frac{1}{n} 
	\big( 
	|\mathcal F_{\mathcal V}(1,n)| - |\mathcal F_{\mathcal V}(0,n)| 
	\big).
\end{equation}
\end{theorem}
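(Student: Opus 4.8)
The plan is to recast the argument behind \eqref{eq:formpn} so that the Raney lemma is applied not to the $n$ individual non-vertical steps of a path but to $n$ \emph{compound blocks}, each consisting of one non-vertical step together with the vertical steps that immediately follow it. This localizes the effect of the vertical steps and keeps the cyclic group acting on exactly $n$ objects, which is what produces the factor $\frac1n$. The two families $\mathcal F_{\mathcal V}(1,n)$ and $\mathcal F_{\mathcal V}(0,n)$ will enter through a clean bookkeeping of whether a free path begins with a vertical step.

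First I would restrict attention to the subset $\mathcal F^{\ast}_{\mathcal V}(1,n)\subseteq\mathcal F_{\mathcal V}(1,n)$ of free paths whose first step is non-vertical. Every such path factors uniquely as $B_1B_2\cdots B_n$, where $B_i=S_{k_i}V^{a_i}$ with $a_i\ge 0$, and I call $\mathrm{ht}(B_i)=k_i-a_i$ the height of the $i$th block; landing at $(n,-1)$ amounts to $\sum_i \mathrm{ht}(B_i)=-1$. The key geometric observation is that, within a single block $S_{k}V^{a}$, the lowest lattice point is attained at the very end of the block: the trailing vertical steps only decrease the height, and even a leading down step lands at a height no lower than the block's end. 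Consequently a path in $\mathcal F^{\ast}_{\mathcal V}(1,n)$ is $1$-primary if and only if every partial sum $\mathrm{ht}(B_1)+\cdots+\mathrm{ht}(B_i)$ is nonnegative for $i<n$, i.e. precisely the Raney condition for the integer sequence $(\mathrm{ht}(B_1),\dots,\mathrm{ht}(B_n))$ whose total is $-1$.

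With this equivalence in hand, I would let the cyclic group act by rotating the blocks, $B_1\cdots B_n\mapsto B_2\cdots B_nB_1$; this preserves $\mathcal F^{\ast}_{\mathcal V}(1,n)$, the total height, and the number of non-vertical steps. Since the block heights sum to $-1$ the sequence is aperiodic, so every orbit has exactly $n$ elements, and Lemma~\ref{lem:raney} guarantees that each orbit contains exactly one block sequence all of whose proper partial sums are nonnegative, hence exactly one $1$-primary path. This yields $|\mathcal P_{\mathcal V}(1,n)|=\tfrac1n|\mathcal F^{\ast}_{\mathcal V}(1,n)|$. Finally I would account for the excluded paths: a free path in $\mathcal F_{\mathcal V}(1,n)$ that begins with $V$ becomes, after deleting that initial $V$ and translating, exactly a free path from $(0,0)$ to $(n,0)$, giving a bijection onto $\mathcal F_{\mathcal V}(0,n)$. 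Hence $|\mathcal F^{\ast}_{\mathcal V}(1,n)|=|\mathcal F_{\mathcal V}(1,n)|-|\mathcal F_{\mathcal V}(0,n)|$, and \eqref{eq:Pv1n} follows.

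The step I expect to require the most care is the equivalence between the $1$-primary condition on a genuine lattice path and the Raney condition on the block-height sequence: one must check that the minimum height inside each block really does occur at the block boundary, so that verifying nonnegativity only at block ends suffices, including the case where the block's non-vertical step is a down step, and one must confirm that the unique primary path delivered by the cycle lemma indeed starts with a non-vertical step, so that it lies in $\mathcal F^{\ast}_{\mathcal V}(1,n)$. The aperiodicity needed for orbits of size exactly $n$ is a routine consequence of the total being $-1$.
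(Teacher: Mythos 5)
Your proposal is correct and follows essentially the same route as the paper: the paper likewise factors a path into the $n$ blocks $S_{a_i}V^{b_i}$, applies the Raney/cycle lemma to the block-height sequence $(a_i-b_i)$ summing to $-1$, and identifies the free paths beginning with a non-vertical step with the difference $|\mathcal F_{\mathcal V}(1,n)|-|\mathcal F_{\mathcal V}(0,n)|$ via deletion of an initial $V$. Your write-up merely spells out details the paper leaves implicit (the block-minimum observation, aperiodicity, and orbit size $n$).
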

\begin{proof}
Any path from $\mathcal P_{\mathcal V}(1,n)$ is represented as
$S_{a_1} V^{b_1} S_{a_2} V^{b_2} \cdots S_{a_n} V^{b_n}$ for some $a_1,\ldots,a_n$ depending on $\mathcal V$ and $b_1,\ldots,b_n\geq 0$. Let $\alpha=(a_1-b_1,a_2-b_2,\ldots,a_n-b_n)$. The total sum of members of $\alpha$ is $-1$ and any partial sum (except the total sum) is nonnegative. By the Raney lemma, the number of such sequences is equal to $1/n$ times the number of sequences $\beta = (c_1 - d_1,\ldots,c_n-d_n)$, where $d_1,\ldots,d_n\geq 0$, $c_1,\ldots,c_n$ depend on $\mathcal V$, and the total sum of elements of $\beta$ is $-1$. Observe that $\beta$ designates uniquely a free $\mathcal V$-path running from $(0,0)$ to $(n,-1)$ whose the first step is non-vertical. The number of sequences $\beta$ is $|\mathcal F_{\mathcal V}(1,n)| - |\mathcal F_{\mathcal V}(0,n)|$, which finishes the proof.
\end{proof}

\begin{theorem}
Let $n\geq 1$ and $m\in\mathbb Z$, then
\begin{align}
	\label{eq:FvN}
	|\mathcal F_{\mathcal V}(m,n) |
	&= \sum_{j=0}^{Nn+m} \binom{n+j}{j} |\mathcal F_{\mathcal N}(m-j,n)|,
	\\
	\label{eq:PvN}
	|\mathcal P_{\mathcal V}(1,n)| 
	&= \frac{1}{n} \sum_{j=0}^{Nn+1} \binom{n+j-1}{j}
	|\mathcal F_{\mathcal N}(1-j,n)|.
\end{align} 
\end{theorem}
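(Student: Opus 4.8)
The plan is to prove \eqref{eq:FvN} by a direct combinatorial decomposition and then to deduce \eqref{eq:PvN} from it together with \eqref{eq:Pv1n} and Pascal's rule. First I would establish \eqref{eq:FvN}. Every free path in $\mathcal F_{\mathcal V}(m,n)$ contains exactly $n$ non-vertical steps (one for each unit of horizontal progress) together with some number $j\geq 0$ of vertical steps $V$, and I would classify the paths by this value $j$. Deleting the $j$ vertical steps leaves an $\mathcal N$-path with $n$ steps; since each deleted $V$ contributed $-1$ to the final height, the remaining path ends at $(n,-(m-j))$, that is, it lies in $\mathcal F_{\mathcal N}(m-j,n)$. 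Conversely, given a path in $\mathcal F_{\mathcal N}(m-j,n)$, I recover every $\mathcal V$-path in the class by choosing where to reinsert the $j$ copies of $V$: the $n$ non-vertical steps determine $n+1$ gaps, and distributing $j$ indistinguishable vertical steps among them is a composition of $j$ into $n+1$ nonnegative parts, giving $\binom{n+j}{j}$ choices. Since deletion and reinsertion are mutually inverse, summing over $j$ yields $|\mathcal F_{\mathcal V}(m,n)| = \sum_j \binom{n+j}{j}\,|\mathcal F_{\mathcal N}(m-j,n)|$. For the summation range I would note that $n$ non-vertical steps raise the height by at most $N$ each, so the reachable height $j-m$ is at most $Nn$; hence $\mathcal F_{\mathcal N}(m-j,n)$ is empty once $j>Nn+m$, which justifies truncating at $Nn+m$.

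Then I would derive \eqref{eq:PvN}. Starting from \eqref{eq:Pv1n}, I substitute \eqref{eq:FvN} for $m=1$ and $m=0$:
\begin{align*}
	|\mathcal F_{\mathcal V}(1,n)| &= \sum_{j=0}^{Nn+1} \binom{n+j}{j}\, |\mathcal F_{\mathcal N}(1-j,n)|,
	\\
	|\mathcal F_{\mathcal V}(0,n)| &= \sum_{j=0}^{Nn} \binom{n+j}{j}\, |\mathcal F_{\mathcal N}(-j,n)|.
\end{align*}
The reindexing $j\mapsto j-1$ rewrites the second sum as $\sum_{j=1}^{Nn+1} \binom{n+j-1}{j-1}\,|\mathcal F_{\mathcal N}(1-j,n)|$, so that both sums involve the same quantities $|\mathcal F_{\mathcal N}(1-j,n)|$. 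For $j\geq 1$ I subtract term by term and apply Pascal's rule $\binom{n+j}{j} = \binom{n+j-1}{j} + \binom{n+j-1}{j-1}$, leaving coefficient $\binom{n+j-1}{j}$; the boundary term $j=0$, present only in the first sum, contributes $\binom{n}{0}=\binom{n-1}{0}=1$ and matches. Hence the difference collapses to $\sum_{j=0}^{Nn+1} \binom{n+j-1}{j}\,|\mathcal F_{\mathcal N}(1-j,n)|$, and dividing by $n$ gives \eqref{eq:PvN}.

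The main obstacle I expect is bookkeeping rather than conceptual depth. In \eqref{eq:FvN} the delicate point is to argue cleanly that deletion and reinsertion of vertical steps really is a bijection between each class of $\mathcal V$-paths and the pairs consisting of an underlying $\mathcal N$-path and a placement of the $V$'s, and to pin down the exact summation bounds (in particular that negative first arguments $m-j$ cause no trouble for \emph{free} paths, whose endpoints may lie above the $x$-axis). In \eqref{eq:PvN} the care lies in aligning the two binomial sums after the shift $j\mapsto j-1$ so that Pascal's rule applies uniformly, and in handling the $j=0$ boundary term separately since the shifted sum begins at $j=1$.
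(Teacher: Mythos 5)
Your proposal is correct and follows essentially the same route as the paper: partitioning $\mathcal F_{\mathcal V}(m,n)$ by the number $j$ of vertical steps, counting their insertions into $n+1$ gaps via compositions to get $\binom{n+j}{j}$, and then deducing \eqref{eq:PvN} from \eqref{eq:Pv1n} by reindexing the second sum and applying Pascal's rule. In fact you spell out the binomial bookkeeping (the shift $j\mapsto j-1$ and the $j=0$ boundary term) that the paper only sketches.
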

\begin{proof}
First, we show \eqref{eq:FvN}. The number of vertical steps in any path from $\mathcal F_{\mathcal V}(m,n)$ is an integer from $\{0,1,\ldots,Nn+m\}$. Therefore, we partition the family $\mathcal F_{\mathcal V}(m,n)$ into pairwise disjoint subfamilies $A_0,A_1,\ldots,$ $A_{Nn+m}$, such that $A_j$ contains these paths whose number of vertical steps is $j$. 
To calculate the size of $A_j$, observe that adding $j$ vertical steps to any free $\mathcal N$-path (without vertical steps) running from $(0,0)$ to $(n,j-m)$ we obtain a free path from $\mathcal F_{\mathcal V}(m,n)$.
Any such path has $n$ non-vertical steps $S_k$ and those $j$ vertical steps may be added between them on $s$ ways, where $s$ is the number of solutions of $a_0 + a_1 + \cdots + a_n = j$, where $a_0,\ldots,a_n\geq 0$.
Therefore, the size of $A_j$ is $\binom{n+j}{j}$ times the size of $\mathcal F_{\mathcal N}(m-j,n)$.

The second equality \eqref{eq:PvN} follows directly from \eqref{eq:Pv1n} together with \eqref{eq:FvN}. That is,
\begin{align*}
	|\mathcal P_{\mathcal V}(1,n)| 
	&= \frac{1}{n} \left( 
		\sum_{j=0}^{Nn+1} \binom{n+j}{j} |\mathcal F_{\mathcal N}(1-j,n)|
		-
		\sum_{j=0}^{Nn} \binom{n+j}{j} |\mathcal F_{\mathcal N}(0-j,n)|
	\right).
\end{align*}
Changing the range summation of the second sum and using the recurrence relation for binomial coefficients we obtain the required formula.
\end{proof}

Suppose that $S$ is a step from $\mathcal S$. Let $\#Steps(S \in \mathcal P_{\mathcal S}(1,n))$ denote the total number of occurrences of steps $S$ in the set of all paths from $\mathcal P_{\mathcal S}(1,n)$, and let $\#Steps(\mathcal P_{\mathcal S}(1,n))$ denote the total number of all steps in $\mathcal P_{\mathcal S}(1,n)$.

\begin{theorem}
\label{th:prop1}
Let $n\geq 1$, then
\begin{subequations}
\begin{align}
	\#Steps(V \in \mathcal P_{\mathcal S}(1,n)) &= |\mathcal F_{\mathcal S}(0, n)|,
	\\
	\#Steps(S_k \in \mathcal P_{\mathcal S}(1,n)) &= |\mathcal F_{\mathcal S}(1+k, n-1)|.
\end{align}
\end{subequations}
\end{theorem}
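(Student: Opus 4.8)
The plan is to prove both identities by a double-counting argument, passing from primary paths to free paths via the Raney correspondence already used to establish~\eqref{eq:Pv1n}. The key observation is that each identity in Theorem~\ref{th:prop1} counts a \emph{step-occurrence statistic} over the whole family $\mathcal P_{\mathcal S}(1,n)$, and such aggregate statistics are often easier to control at the level of the unconstrained free paths, where the cyclic-shift symmetry acts transparently. So first I would recall the encoding from the proof of~\eqref{eq:Pv1n}: a path in $\mathcal P_{\mathcal S}(1,n)$ is written $S_{a_1}V^{b_1}\cdots S_{a_n}V^{b_n}$, and the associated integer sequence $\alpha=(a_1-b_1,\ldots,a_n-b_n)$ has total sum $-1$ with all proper partial sums nonnegative; the Raney lemma gives a bijection between the $n$ cyclic shifts of each admissible sequence and the full family of free sequences with sum $-1$.

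\textbf{For the vertical-step count}, I would set up a pointed double count. Form the set of pairs $(\pi, V)$ where $\pi\in\mathcal P_{\mathcal S}(1,n)$ and $V$ is a distinguished occurrence of a vertical step in $\pi$; its cardinality is exactly $\#Steps(V\in\mathcal P_{\mathcal S}(1,n))$. I would then argue that marking one vertical step and applying the cycle lemma produces a bijection onto $\mathcal F_{\mathcal S}(0,n)$. Intuitively, each free path over $\mathcal F_{\mathcal S}(0,n)$ (sum $0$) has $n$ cyclic shifts, and among the paths obtained by appending/absorbing a marked vertical step into a sum-$(-1)$ primary configuration the over-counting factor of $n$ is cancelled precisely by the number of marked vertical positions; the bookkeeping should show that choosing a primary path together with a marked $V$ is equinumerous with choosing a free path of height $0$. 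The cleanest route is probably to realise a distinguished $V$ as shifting the required total sum from $-1$ to $0$: deleting the marked vertical step turns an $(n,-1)$-configuration into an $(n,0)$-configuration, and the Raney rigidity removes the cyclic ambiguity.

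\textbf{For the $S_k$-step count}, the analogous pointed count is over pairs $(\pi, S_k)$ with a distinguished occurrence of the non-vertical step $S_k$; here removing that marked step should leave a free $\mathcal S$-path on $n-1$ non-vertical steps ending at height $1+k$, which is exactly $\mathcal F_{\mathcal S}(1+k,n-1)$. The height shift $1+k$ arises because deleting an $S_k$ step from a path that descends by one unit overall forces the remaining path to end one unit higher than the deleted step's own rise, i.e.\ at $-(-1)+k=1+k$ relative to the deletion point. Again I would use the cycle lemma to kill the factor of $n$ coming from the rotational symmetry, matching each marked occurrence in a primary path to a unique free path of one fewer step.

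\textbf{The main obstacle} I anticipate is making the pointing-and-deletion bijection genuinely well-defined and invertible rather than merely a heuristic sum. Specifically, I must verify that after deleting the marked step the Raney admissibility (all proper partial sums nonnegative) interacts correctly with the re-rooting, so that exactly one of the $n$ cyclic shifts is selected and the inverse map (choosing where to reinsert the marked step) is unambiguous. A clean alternative that sidesteps the subtlety is to sum the already-proven enumerative formulas: writing $\#Steps(V\in\mathcal P_{\mathcal S}(1,n))=\sum_{\pi}(\text{number of }V\text{'s in }\pi)$ and evaluating via a generating-function or kernel refinement of~\eqref{eq:PvN}, then recognising the result as $|\mathcal F_{\mathcal S}(0,n)|$ through~\eqref{eq:FvN}. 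I would keep this algebraic fallback in reserve in case the combinatorial pointing argument proves hard to make fully rigorous.
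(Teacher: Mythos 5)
Your proposal is essentially the paper's own proof: the paper also counts marked occurrences of the step $S$, writing $\pi=\alpha\, S\, \beta$ with the distinguished occurrence in the middle, deleting it, and sending the pair $(\pi,p)$ to the rotated free path $\beta\alpha$. The single point you flag as the main obstacle --- making the re-rooting unambiguous and the deletion map invertible --- is exactly what the paper settles, and it does so with no $n$-fold over-counting and no explicit appeal to the cycle lemma: because $\pi$ is $1$-primary (only its final point lies below the $x$-axis), the concatenation $\beta\alpha$ attains its minimal level for the \emph{first} time precisely at the junction, i.e., at the end of $\beta$; hence the inverse map takes an arbitrary free path $\gamma\in\mathcal F_{\mathcal S}(0,n)$ (resp.\ $\mathcal F_{\mathcal S}(1+k,n-1)$), cuts it at its leftmost lowest point to get $\gamma=\beta\alpha$, and returns $\alpha\, S\, \beta$ with the mark on the reinserted step, the index $p$ being recovered as one plus the number of occurrences of $S$ in $\alpha$. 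So the correspondence is a direct bijection between marked primary paths and free paths, your ``Raney rigidity'' being realized concretely by this leftmost-minimum criterion, and the algebraic fallback via \eqref{eq:FvN}--\eqref{eq:PvN} is not needed.
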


\begin{proof}
Let $S$ be a fixed step from $\mathcal S$ and let us introduce the temporary notation $\mathcal F$ for $\mathcal F_{\mathcal S}(0,n)$ if $S=V$ or $\mathcal F_{\mathcal S}(1+k,n-1)$ if $S=S_k$ for certain $k\in\mathbb Z$. Further, by a  \emph{level} we mean a line $y=l$ for any $l\in\mathbb Z$.
Take $\pi\in \mathcal P_{\mathcal V}(1,n)$ and suppose that $\pi$ has exactly $d$ steps $S$ and $d\geq 1$. Let $1\leq p\leq d$, then
\begin{equation}
	\label{eq:freedecompos}
	\pi = \underbrace{\pi^{(1)} S\, \pi^{(2)} S \cdots S\, \pi^{(p-1)} S\, \pi^{(p)}}_{\alpha}
	 S 
	 \underbrace{\pi^{(p+1)} S \cdots  \pi^{(d)} S\, \pi^{(d+1)}}_{\beta},
\end{equation}
for certain possibly empty subpaths $\pi^{(1)},\pi^{(2)},\ldots,\pi^{(d+1)}$. We define a function $\phi$ from the set of all occurrences of steps $S$ in paths from $\mathcal P_{\mathcal S}(1,n)$ to the set of paths from $\mathcal F$ as follows
\begin{equation}
	\label{eq:freedecompos1}
	\phi(\pi,p) = \beta\, \alpha,
\end{equation}
where $\alpha$ and $\beta$ are subpaths of $\pi$ defined as in \eqref{eq:freedecompos}.
To show that $\phi$ is a bijection we need to show that $\phi(\pi,p)$ is a path from $\mathcal F$ and any path from $\mathcal F$ is decomposable as \eqref{eq:freedecompos1} for certain uniquely determined $p\geq 1$. Then the procedure $\phi$ is reversible and $\phi$ is a bijection.

First, observe that $\phi(\pi,p)$ removes only one step $S$ from $\pi$ which implies that the result is a free path from $\mathcal F$.
Next, suppose that $(x,y)$ is the leftmost point of $\phi(\pi,p)$ such that $y$ is the minimal level that the path reaches. We prove that the path $\phi(\pi,p)$ reaches $(x,y)$ exactly after the last step of $\beta$ in \eqref{eq:freedecompos1}.
Recall that $\pi$ is a primary $\mathcal S$-path running from $(0,0)$ to $(n,-1)$ for which only its ending point lies below the $x$-axis. Thus $\pi$ reaches the lowest level exactly after part $\pi^{(d+1)}$. It follows that $\alpha$ is a path that does not go below the $x$-axis. On the other hand, only the ending point of $\beta$ reaches the lowest level. It follows that $p-1$ is the number of steps $S$ of $\phi(\pi,p)$ that lie to the right from $(x,y)$.

Let $\gamma$ be a free $\mathcal S$-path $\gamma$ from $\mathcal F$ and $\gamma = \beta \alpha$ such that the last point of the subpath $\beta$ lies at the left-most minimal level that $\gamma$ reaches. Then we set $\phi^{-1}(\gamma)$ to be the pair $(\alpha S \beta, p)$, where $p$ is the number of steps $S$ in $\alpha$ plus one.

\end{proof}

\begin{example}
\label{examp:prop1}
Let $\pi$ be a path from Fig.~\ref{fig:1} and $S=V$. The path $\phi(\pi,2)$ is given in Fig.~\ref{fig:prop1}.
\end{example}

\begin{figure}[ht]
\begin{center}
	\includegraphics[width=35mm]{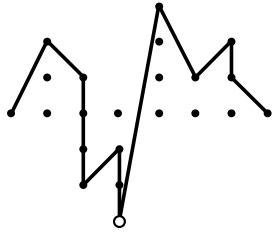}
	\caption{A free $\mathcal V$-path running from $(0,0)$ to $(7,0)$ which has one vertical step that lie to the right from the lowest point drawn by the open circle.}
	\label{fig:prop1}
\end{center}
\end{figure}

\begin{theorem}
\label{th:prop2}
For $n\geq 1$, we have $\#Steps(\mathcal P_{\mathcal S}(1,n)) = |\mathcal F_{\mathcal S}(1,n)|$.
\end{theorem}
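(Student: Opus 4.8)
The plan is to express the total step count as a sum of two contributions — the non-vertical steps and the vertical steps — and to evaluate each using results already established. The key structural observation is that every path in $\mathcal P_{\mathcal V}(1,n)$ contains \emph{exactly} $n$ non-vertical steps: each step $S_k=(1,k)$ advances the horizontal coordinate by one while the vertical step $V=(0,-1)$ leaves it unchanged, so reaching the endpoint $(n,-1)$ forces precisely $n$ non-vertical steps, with an arbitrary number of interspersed vertical steps. Consequently the total number of non-vertical steps summed over the whole family is $n\,|\mathcal P_{\mathcal V}(1,n)|$.

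First I would record the decomposition
\[
	\#Steps(\mathcal P_{\mathcal V}(1,n)) = n\,|\mathcal P_{\mathcal V}(1,n)| + \#Steps(V\in\mathcal P_{\mathcal V}(1,n)),
\]
in which the vertical contribution is handled directly by the first identity of Theorem~\ref{th:prop1}, giving $\#Steps(V\in\mathcal P_{\mathcal V}(1,n)) = |\mathcal F_{\mathcal V}(0,n)|$. Then I would substitute the count of primary paths from \eqref{eq:Pv1n}, namely $n\,|\mathcal P_{\mathcal V}(1,n)| = |\mathcal F_{\mathcal V}(1,n)| - |\mathcal F_{\mathcal V}(0,n)|$. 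Adding the two contributions, the terms $\pm|\mathcal F_{\mathcal V}(0,n)|$ cancel and leave exactly $|\mathcal F_{\mathcal V}(1,n)|$, as required. When $V\notin\mathcal S$ the vertical term vanishes and one uses \eqref{eq:formpn} in place of \eqref{eq:Pv1n}, and the conclusion is identical.

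There is essentially no obstacle once Theorem~\ref{th:prop1} and \eqref{eq:Pv1n} are in hand: the argument reduces to a one-line cancellation resting on the ``exactly $n$ non-vertical steps'' observation. The only mild subtlety is to be sure the identity \eqref{eq:Pv1n}, stated for $\mathcal V$-paths, is invoked with $\mathcal S=\mathcal V$ (the case $V\in\mathcal S$), consistent with the proof of Theorem~\ref{th:prop1}, which is itself phrased in terms of $\mathcal P_{\mathcal V}$.

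As an alternative, more self-contained route that avoids \eqref{eq:Pv1n} altogether, I would sum both formulas of Theorem~\ref{th:prop1} over all step types, obtaining $|\mathcal F_{\mathcal V}(0,n)| + \sum_{S_k\in\mathcal N}|\mathcal F_{\mathcal V}(1+k,n-1)|$, and then classify the free paths in $\mathcal F_{\mathcal V}(1,n)$ by their initial step: removing an initial $V$ leaves a path counted by $\mathcal F_{\mathcal V}(0,n)$, while removing an initial $S_k$ leaves one counted by $\mathcal F_{\mathcal V}(1+k,n-1)$. This first-step decomposition shows the sum equals $|\mathcal F_{\mathcal V}(1,n)|$; the only point requiring care in this variant is the index bookkeeping in the shift arguments.
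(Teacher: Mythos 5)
Your proof is correct, but it takes a genuinely different route from the paper's. The paper proves Theorem~\ref{th:prop2} with a direct cyclic-shift bijection: for a primary path $\mu = \mu_1\cdots\mu_r$ and a marked position $k$ it forms the rotation $\psi(\mu,k)=\mu_k\cdots\mu_r\,\mu_1\cdots\mu_{k-1}$, which is a free path in $\mathcal F_{\mathcal S}(1,n)$, and then uses the modified Raney lemma (Lemma~\ref{lem:raney}) to show that every free path whose steps sum to $-1$ has exactly one rotation that is primary; hence $\psi$ is a bijection between the set of all steps of paths in $\mathcal P_{\mathcal S}(1,n)$ and the set $\mathcal F_{\mathcal S}(1,n)$. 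You instead argue arithmetically: each primary path to $(n,-1)$ has exactly $n$ non-vertical steps, so the total step count is $n\,|\mathcal P_{\mathcal V}(1,n)| + \#Steps(V\in\mathcal P_{\mathcal V}(1,n))$, and substituting Theorem~\ref{th:prop1} and \eqref{eq:Pv1n} gives the cancellation $\bigl(|\mathcal F_{\mathcal V}(1,n)|-|\mathcal F_{\mathcal V}(0,n)|\bigr)+|\mathcal F_{\mathcal V}(0,n)|=|\mathcal F_{\mathcal V}(1,n)|$. This is non-circular, since both \eqref{eq:Pv1n} and Theorem~\ref{th:prop1} are established before Theorem~\ref{th:prop2} and do not rely on it. What each approach buys: the paper's proof is self-contained (it needs only Lemma~\ref{lem:raney}) and yields an explicit bijection, which is a strictly finer statement than the numerical identity; your proof is shorter, exposes why the count splits as non-vertical plus vertical contributions, and explicitly covers the case $V\notin\mathcal S$ via \eqref{eq:formpn}, which the paper's rotation argument handles uniformly without comment. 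Your alternative route --- summing both parts of Theorem~\ref{th:prop1} over all step types and matching the result against the first-step decomposition of $\mathcal F_{\mathcal S}(1,n)$ (an initial $V$ leaves a path counted by $\mathcal F_{\mathcal S}(0,n)$, an initial $S_k$ one counted by $\mathcal F_{\mathcal S}(1+k,n-1)$) --- is also sound, and in spirit it is a ``last-step-free'' analogue of the paper's own proof of \eqref{eq:FvN}; note only that all but finitely many terms $|\mathcal F_{\mathcal S}(1+k,n-1)|$ vanish when $\mathcal S$ has infinitely many down steps, so the sum is finite.
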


\begin{proof}
We show a bijection $\psi$ between the set of all steps in paths from $\mathcal P_{\mathcal S}(1,n)$ and the set of paths from $\mathcal F_{\mathcal S}(1,n)$. 
Take a path $\mu$ from $\mathcal P_{\mathcal S}(1,n)$ and suppose that $\mu = \mu_1 \cdots \mu_r$. Let $k \in \{1,2,\ldots,r\}$, then we set
\[
	\psi(\mu,k) = \mu_k \, \mu_{k+1} \cdots \mu_r \, \mu_1 \, \mu_2 \cdots \mu_{k-1}.
\]
It is clear that $\psi(\mu,k) \in \mathcal F_{\mathcal S}(1,n)$.
Next, we show a map $\zeta$ from $\mathcal F_{\mathcal S}(1,n)$ to the set of all steps in paths from $\mathcal P_{\mathcal S}(1,n)$. Let $\pi$ be a path from $\mathcal F_{\mathcal S}(1,n)$ and $\pi = \pi_1 \cdots \pi_r$. 
Let us represent $\pi$ as the sequence $\hat s=(\hat s_1,\hat s_2,\ldots,\hat s_r)$ of integers according to the rule
\[
	\hat s_i = \left\{
	\begin{array}{rl}
	k & \mathrm{if} \,\, \pi_i = S_k, \\
	-1 &  \mathrm{if} \,\, \pi_i= V. \\
	\end{array}
	\right.
\]
The sum of terms of the sequence $\hat s$ is $-1$. Therefore, the modified Raney lemma (Lemma~\ref{lem:raney}) implies that there is only one cyclic-shift $s=(s_1,\ldots,s_r)$ of $\hat s$ that every its partial sum except the total sum is nonnegative. Moreover, this cyclic-shift $s$ determines uniquely an index $k$ such that the cyclic-shift 
$(s_k, s_{k+1},\ldots, s_r, s_1, \ldots,s_{k-1})$
of $s$ is the original sequence $\hat s$. Now, if we change back terms of the sequence $s$ into steps according to the above rule, we obtain a primary $\mathcal S$-path $\mu$.
This implies that with any free path $\pi$ from $\mathcal F_{\mathcal S}(1,n)$ we have associated uniquely a primary path $\mu$ from $\mathcal P_{\mathcal S}(1,n)$ and an index $k$ such that $\phi(\mu,k) = \pi$.
\end{proof}

\section{Enumerative results}
\label{sec:enumer}

As in previous sections, we fix $\mathcal S$ to be a subset of $\Omega_N\cup\{V\}$ such that $U_N \in \mathcal S$. Also, we set $\mathcal V = \mathcal S \cup \{V\}$ and $\mathcal N = \mathcal S \setminus \{V\}$.
Recall that we denote by $\mathcal P_{\mathcal S}(m,n)$ the set of all $m$-primary $\mathcal S$-paths running from $(0,0)$ to $(n,-m)$, and by $\mathcal F_{\mathcal S}(m,n)$ the set of all free $\mathcal S$-paths running from $(0,0)$ to $(n,-m)$ without further restriction. In this section we derive formulas for the sizes of these families.

\subsection{General case}

First, we consider the case where the set of steps $\mathcal V$ may contain infinitely many down steps.
The number of free $\mathcal V$-paths running from $(0,0)$ to $(n,-m)$ satisfies the following recurrence relation
\begin{equation}
	\label{eq:recurfree}
	|\mathcal F_{\mathcal V}(m,n)| 
	= |\mathcal F_{\mathcal V}(m-1,n)|
	+ \sum_{S_k\in\mathcal V} |\mathcal F_{\mathcal V}(m+k,n-1)|
\end{equation}
with initial conditions $|\mathcal F_{\mathcal V}(-Nn,n)| = |\mathcal F_{\mathcal V}(r,0)| = 1$, for $m \geq -Nn$ and $r,n\geq 0$. For $n<0$ or $m < -Nn$ the number of such paths is zero.
Thus even $\mathcal V$ has infinitely many down steps, the sum on the right-hand side of \eqref{eq:recurfree} is finite.

Let us define a bivariate generating function in the sense that
\[
	F_{\mathcal V}(x,y) = \sum_{m\geq 0} \sum_{n\geq 0} |\mathcal F_{\mathcal V}(m-Nn,n)|x^n y^m.
\]
\begin{proposition} We have
\begin{align}
	\label{eq:generfree}
	F_{\mathcal V}(x,y) &= \Big( 1 - y - x \sum_{S_k\in\mathcal V}y^{N-k} \Big)^{-1}.
\end{align}
\end{proposition}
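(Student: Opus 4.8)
The plan is to prove the generating function identity by translating the recurrence \eqref{eq:recurfree} into an equation for $F_{\mathcal V}(x,y)$ and solving it. The key observation is that the chosen indexing $|\mathcal F_{\mathcal V}(m-Nn,n)|$ in the definition of $F_{\mathcal V}$ is precisely what makes the exponents nonnegative, so that $F_{\mathcal V}$ is a genuine formal power series in $x$ and $y$. First I would verify that the series is well-defined: for fixed $n$, the second coordinate $m-Nn$ ranges over $m\ge 0$, i.e. over values $\ge -Nn$, which is exactly the range on which the paths exist and on which the initial condition $|\mathcal F_{\mathcal V}(-Nn,n)|=1$ is imposed. So the coefficient of $x^ny^m$ is nonzero only for $m\ge 0$, and the generating function is supported in the correct quadrant.

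The main computation is to multiply the recurrence \eqref{eq:recurfree} by $x^n y^m$ (after the shift $m\mapsto m-Nn$) and sum over $n\ge 0$ and $m\ge 0$. Let me track how each of the three pieces transforms. The left-hand side sums to $F_{\mathcal V}(x,y)$ itself (up to boundary terms I will isolate below). In the term $|\mathcal F_{\mathcal V}(m-1,n)|$, replacing $m$ by $m-Nn$ gives $|\mathcal F_{\mathcal V}(m-Nn-1,n)|$, whose generating function is $y\,F_{\mathcal V}(x,y)$ after reindexing $m\mapsto m+1$. In the sum $\sum_{S_k\in\mathcal V}|\mathcal F_{\mathcal V}(m+k,n-1)|$, the shifted index is $m-Nn+k$, and combining the drop $n\mapsto n-1$ (contributing one factor $x$ and changing the normalization from $N(n-1)$ to $Nn$) with the shift by $k$ produces a factor $x\,y^{N-k}$ times $F_{\mathcal V}(x,y)$ for each step $S_k\in\mathcal V$. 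Summing over the steps yields $x\big(\sum_{S_k\in\mathcal V}y^{N-k}\big)F_{\mathcal V}(x,y)$. Collecting everything gives
\[
	F_{\mathcal V}(x,y) = 1 + y\,F_{\mathcal V}(x,y) + x\Big(\sum_{S_k\in\mathcal V}y^{N-k}\Big) F_{\mathcal V}(x,y),
\]
where the constant $1$ records the initial conditions (the $n=0$ or the $m=-Nn$ boundary contributions). Solving this linear equation for $F_{\mathcal V}(x,y)$ immediately gives \eqref{eq:generfree}.

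The step I expect to be the main obstacle is the careful bookkeeping of the boundary terms, namely confirming that all the initial-condition contributions $|\mathcal F_{\mathcal V}(-Nn,n)|=|\mathcal F_{\mathcal V}(r,0)|=1$ assemble into exactly the single constant $1$ and do not leave residual lower-order terms. The delicate point is that the recurrence is stated for the ``interior'' values, so when I reindex each of the three summations the ranges of $n$ and $m$ shift slightly, and the discrepancies between these ranges are precisely the boundary cells. I would handle this by writing each summation over its natural range, noting that the differences in ranges correspond to cells where $n=0$ or $m-Nn$ hits its minimal value $-Nn$, and checking that the telescoping of these boundary contributions produces the additive constant. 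Once the functional equation is established in the form above, solving it is a one-line manipulation, and the fact that $\sum_{S_k\in\mathcal V}y^{N-k}$ is a polynomial (since $k\le N$ for all $S_k\in\mathcal V$) guarantees that the resulting rational expression expands as a legitimate formal power series agreeing with $F_{\mathcal V}$.
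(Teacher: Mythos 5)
Your proposal is correct and follows essentially the same route as the paper: translate the recurrence \eqref{eq:recurfree} into the functional equation $F_{\mathcal V}(x,y) = 1 + y\,F_{\mathcal V}(x,y) + x\bigl(\sum_{S_k\in\mathcal V} y^{N-k}\bigr)F_{\mathcal V}(x,y)$ and solve; the paper simply compresses your index bookkeeping into the phrase ``standard methods of generatingfunctionology.'' (One immaterial quibble: $\sum_{S_k\in\mathcal V} y^{N-k}$ need not be a polynomial when $\mathcal V$ contains infinitely many down steps, but it is still a formal power series in $y$, so the inversion remains valid.)
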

\begin{proof}
Applying standard methods of generatingfunctionology \cite{wilfgener} to the recurrence relation \eqref{eq:recurfree} one can show that
\[
	F_{\mathcal V}(x,y) = 1 + y F_{\mathcal V}(x,y) + x \sum_{S_k\in\mathcal S} y^{N-k} F_{\mathcal V}(x,y)
\]
which implies \eqref{eq:generfree} immediately.
\end{proof}

\begin{proposition}
\label{prop:numfree}
Let $n\geq 0$ and $m\in\mathbb Z$, then
\begin{subequations}
\begin{align}
	\label{eq:fznm1}
	|\mathcal F_{\mathcal N}(m,n) | 
	&= [y^{N n + m}] \Big( \sum_{S_k\in\mathcal N} y^{N-k} \Big)^n,
	\\
	\label{eq:fznm2}
	|\mathcal F_{\mathcal V}(m,n) | 
	&= [y^{N n + m}] \frac{1}{(1-y)^{n+1}}\Big( \sum_{S_k\in\mathcal N} y^{N-k} \Big)^n.
\end{align} 
\end{subequations}
\end{proposition}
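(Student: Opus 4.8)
The plan is to prove the two identities in turn, deriving the second from the first together with the already-established counting formula \eqref{eq:FvN}.

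For \eqref{eq:fznm1}, I would argue on the generating function by direct enumeration. Since $\mathcal N$ contains no vertical step, every path in $\mathcal F_{\mathcal N}(m,n)$ consists of exactly $n$ non-vertical steps $S_{k_1}, S_{k_2}, \ldots, S_{k_n}$ with each $S_{k_i}\in\mathcal N$, and the word ``free'' imposes no constraint beyond the endpoint; reaching $(n,-m)$ is precisely the condition $k_1+\cdots+k_n = -m$. Encoding each step $S_k$ by the monomial $y^{N-k}$ --- a genuine nonnegative power of $y$ because $k\leq N$ for every $S_k\in\mathcal N$ --- the product $\big(\sum_{S_k\in\mathcal N} y^{N-k}\big)^n$ is a sum over all length-$n$ sequences of steps, each sequence contributing $y^{\sum_i (N-k_i)} = y^{Nn-\sum_i k_i}$. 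Extracting the coefficient of $y^{Nn+m}$ selects exactly the sequences with $\sum_i k_i = -m$, which establishes \eqref{eq:fznm1}.

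For \eqref{eq:fznm2}, I would start from \eqref{eq:FvN}, which records the number $j$ of vertical steps and writes $|\mathcal F_{\mathcal V}(m,n)|$ as $\sum_j \binom{n+j}{j}\,|\mathcal F_{\mathcal N}(m-j,n)|$. Substituting \eqref{eq:fznm1} gives $|\mathcal F_{\mathcal N}(m-j,n)| = [y^{Nn+m-j}]\big(\sum_{S_k\in\mathcal N} y^{N-k}\big)^n$, and the shift identity $[y^{a-j}]G(y) = [y^a]\,y^j G(y)$ lets me pull the coefficient extraction outside the sum:
\[
	|\mathcal F_{\mathcal V}(m,n)| = [y^{Nn+m}] \Big(\sum_{j\geq 0} \binom{n+j}{j} y^j\Big) \Big(\sum_{S_k\in\mathcal N} y^{N-k}\Big)^n.
\]
The inner series sums to $(1-y)^{-(n+1)}$ by the negative binomial expansion, which yields \eqref{eq:fznm2}. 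Terms with $j$ beyond the admissible range of \eqref{eq:FvN} contribute nothing, since then $Nn+m-j<0$ and the corresponding coefficient vanishes, so extending the formal sum to all $j\geq 0$ is harmless.

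I do not expect a serious obstacle; the only point needing care is the bookkeeping of the coefficient extraction across the index shift, together with the check that the negative-index contributions vanish so that the formal sum over all $j\geq 0$ agrees with the finite sum of \eqref{eq:FvN}. As an alternative that sidesteps \eqref{eq:FvN}, I could instead read \eqref{eq:fznm2} directly off the bivariate generating function \eqref{eq:generfree}: expanding $F_{\mathcal V}(x,y) = \sum_{r\geq 0}\big(y + x\sum_{S_k\in\mathcal N} y^{N-k}\big)^r$, taking $[x^n]$ via the binomial theorem, and summing $\sum_{r\geq n}\binom{r}{n} y^{r-n} = (1-y)^{-(n+1)}$ produces the same generating function, upon recalling that $|\mathcal F_{\mathcal V}(m,n)| = [x^n y^{Nn+m}]\,F_{\mathcal V}(x,y)$.
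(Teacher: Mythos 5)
Your proof is correct, and for \eqref{eq:fznm1} it coincides with the paper's argument: both encode a free $\mathcal N$-path as a word of $n$ steps, assign the monomial $y^{N-k}$ to $S_k$, and read off the coefficient. For \eqref{eq:fznm2} your primary route differs from the paper's: the paper extracts the coefficient of $x^n$ from the bivariate generating function \eqref{eq:generfree}, obtaining \eqref{eq:genery} as a geometric-series identity
\[
	[x^n]\,\Big(1-y-xA(y)\Big)^{-1} = \frac{A(y)^n}{(1-y)^{n+1}},
	\qquad A(y)=\sum_{S_k\in\mathcal V}y^{N-k},
\]
whereas you instead combine the earlier combinatorial identity \eqref{eq:FvN} (inserting $j$ vertical steps into $n+1$ gaps) with the negative binomial expansion $\sum_{j\geq 0}\binom{n+j}{j}y^j=(1-y)^{-(n+1)}$, handling the truncation of the sum correctly since $[y^{Nn+m-j}]$ of a power series vanishes when $Nn+m-j<0$. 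Your route is arguably more transparent: it exhibits the factor $(1-y)^{-(n+1)}$ directly as the generating function for distributing vertical steps among the $n+1$ gaps, and it avoids appealing to the recurrence-derived formula \eqref{eq:generfree}; the paper's route is shorter given that \eqref{eq:generfree} is already in hand, and the intermediate identity \eqref{eq:genery} is reused later (e.g.\ for the Riordan array of Proposition~\ref{prop:riordan}). Two minor remarks: \eqref{eq:FvN} is stated in the paper only for $n\geq 1$, so to cover the full range $n\geq 0$ claimed in the proposition you should note the trivial check at $n=0$ (where $|\mathcal F_{\mathcal V}(m,0)|=1$ for $m\geq 0$ and $0$ otherwise, matching $[y^m](1-y)^{-1}$); and the alternative you sketch at the end -- expanding $F_{\mathcal V}(x,y)$ and extracting $[x^n]$ -- is exactly the paper's proof, so you have in effect reproduced both arguments.
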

\begin{proof}
Observe that any free $\mathcal N$-path running from $(0,0)$ to $(n,-m)$ can be represented as
$S_{N-a_1} S_{N-a_2}$ $ \cdots S_{N-a_n}$, where $a_1,a_2,\ldots,a_n$ are nonnegative integers whose sum is $N n + m$. Thus the number of paths from $\mathcal F_{\mathcal N}(m,n)$ is the coefficient of $y^{Nn+m}$ in the power series expansion of $(\sum_{S_k\in\mathcal N} y^{N-k})^n$, as claimed.
On the other hand, from \eqref{eq:generfree} we have
\begin{equation}
	\label{eq:genery}
	\sum_{m\geq 0} |\mathcal F_{\mathcal V}(m-Nn,n)| y^m
	= \frac{1}{(1-y)^{n+1}} \Big( \sum_{S_k\in\mathcal V} y^{N-k} \Big)^n,
\end{equation}
and the formula \eqref{eq:fznm2} follows. 
\end{proof}

\begin{proposition}
\label{prop:numprim}
Let $n\geq 1$, then
\begin{align}
	\label{eq:formprim}
	|\mathcal P_{\mathcal V}(1,n)| 
	&= \frac{1}{n} [y^{Nn+1}] \frac{1}{(1-y)^n}
	\Big( \sum_{S_k\in\mathcal N} y^{N-k} \Big)^n.
\end{align}
\end{proposition}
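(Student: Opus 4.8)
The plan is to combine the Raney-type identity \eqref{eq:Pv1n} with the explicit coefficient formula \eqref{eq:fznm2} for free paths, thereby reducing the whole statement to a single coefficient extraction. First I would invoke \eqref{eq:Pv1n} to write
\[
	|\mathcal P_{\mathcal V}(1,n)| = \frac{1}{n}\big( |\mathcal F_{\mathcal V}(1,n)| - |\mathcal F_{\mathcal V}(0,n)| \big),
\]
so that it suffices to evaluate the bracketed difference. Abbreviating $Q(y) = \big( \sum_{S_k\in\mathcal N} y^{N-k} \big)^n$, Proposition~\ref{prop:numfree} gives $|\mathcal F_{\mathcal V}(m,n)| = [y^{Nn+m}]\, Q(y)/(1-y)^{n+1}$, so the difference is the operator $[y^{Nn+1}] - [y^{Nn}]$ applied to the single series $G(y) = Q(y)/(1-y)^{n+1}$.

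The crucial observation is that multiplication by $1/(1-y)$ converts coefficient extraction into a partial-sum operator. Factoring $G(y) = H(y)/(1-y)$ with $H(y) = Q(y)/(1-y)^n$, one has $[y^r]\,G = \sum_{j=0}^{r} [y^j]\,H$ for every $r\geq 0$, since $1/(1-y) = \sum_{i\geq 0} y^i$. The two partial sums appearing in the difference then telescope:
\[
	[y^{Nn+1}]\,G - [y^{Nn}]\,G = \sum_{j=0}^{Nn+1}[y^j]\,H - \sum_{j=0}^{Nn}[y^j]\,H = [y^{Nn+1}]\,H.
\]
As $H(y) = \frac{1}{(1-y)^n}\big( \sum_{S_k\in\mathcal N} y^{N-k}\big)^n$, substituting this back and dividing by $n$ produces exactly \eqref{eq:formprim}.

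Once the telescoping is set up the computation is essentially mechanical, so I do not expect a genuine obstacle. The one point deserving care is the bookkeeping that the sum in \eqref{eq:fznm2} may be taken equivalently over $\mathcal N$ or over $\mathcal V$ (the vertical step $V$ is not of the form $S_k$, hence contributes nothing to $\sum y^{N-k}$); this is what lets the $1/(1-y)$ factors, absorbed into the partial-sum operator, align with the generating function written in terms of $\mathcal N$. I would also confirm the lower summation index: both partial sums begin at $j=0$ because $H$ is an ordinary power series, so no low-order terms survive the subtraction and exactly one coefficient of $H$ remains.
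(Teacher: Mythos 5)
Your proposal is correct and follows essentially the same route as the paper: both start from \eqref{eq:Pv1n}, express the two free-path counts via \eqref{eq:fznm2} as coefficients of $A_n(y)=\big(\sum_{S_k\in\mathcal N}y^{N-k}\big)^n(1-y)^{-n-1}$, and then observe that the coefficient difference collapses one factor of $1/(1-y)$ --- the paper writes this as $[y^{Nn+1}]A_n-[y^{Nn}]A_n=[y^{Nn+1}](1-y)A_n$, which is algebraically identical to your partial-sum telescoping. The only cosmetic difference is that you phrase the cancellation via the partial-sum operator while the paper multiplies by $(1-y)$ directly; your closing remarks on $\mathcal V$ versus $\mathcal N$ and the summation bounds are sound.
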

\begin{proof}
It follows from \eqref{eq:Pv1n} together with \eqref{eq:fznm1}. Namely,
\begin{align*}
	|\mathcal P_{\mathcal V}(1,n)| 
	&= \frac{1}{n} \left( 
	[y^{Nn+1}] A_n(y) - [y^{Nn}] A_n(y)
	\right)
	\\
	&= \frac{1}{n} \left( 
	[y^{Nn+1}] A_n(y)(1-y)
	\right),
\end{align*}
where $A_n(y) = (\sum_{S_k\in\mathcal N}y^{N-k} )^n (1-y)^{-n-1}$.
\end{proof}

\begin{corollary}
The expected number of vertical steps in a path from $\mathcal P_{\mathcal V}(1,n)$ is equal to
\begin{equation}
	n\cdot \frac{|\mathcal F_{\mathcal V}(0,n)|}{|\mathcal F_{\mathcal V}(1,n)| - |\mathcal F_{\mathcal V}(0,n)| }.
\end{equation}
\end{corollary}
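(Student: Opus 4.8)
The plan is to compute the expectation directly as a ratio, reducing everything to two identities that have already been proven. Since the expectation is taken under the uniform distribution on $\mathcal P_{\mathcal V}(1,n)$, the expected number of vertical steps in a random such path is the total number of occurrences of $V$ across the whole family divided by the number of paths, i.e.
\[
	\frac{\#Steps(V \in \mathcal P_{\mathcal V}(1,n))}{|\mathcal P_{\mathcal V}(1,n)|}.
\]
So the first thing I would do is record this elementary averaging identity and then feed in the known values of the numerator and denominator.

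For the numerator I would invoke Theorem~\ref{th:prop1}, applied with $\mathcal V$ in the role of $\mathcal S$ (this is legitimate, since $\mathcal V\subset\Omega_N\cup\{V\}$ and $U_N\in\mathcal V$, which are exactly the hypotheses of that section). It gives $\#Steps(V \in \mathcal P_{\mathcal V}(1,n)) = |\mathcal F_{\mathcal V}(0,n)|$. For the denominator I would use \eqref{eq:Pv1n}, namely $|\mathcal P_{\mathcal V}(1,n)| = \tfrac{1}{n}\big(|\mathcal F_{\mathcal V}(1,n)| - |\mathcal F_{\mathcal V}(0,n)|\big)$. Substituting both into the ratio and clearing the factor $1/n$ yields
\[
	\frac{|\mathcal F_{\mathcal V}(0,n)|}{\tfrac{1}{n}\big(|\mathcal F_{\mathcal V}(1,n)| - |\mathcal F_{\mathcal V}(0,n)|\big)}
	= n\cdot \frac{|\mathcal F_{\mathcal V}(0,n)|}{|\mathcal F_{\mathcal V}(1,n)| - |\mathcal F_{\mathcal V}(0,n)|},
\]
which is precisely the asserted formula.

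I do not expect any genuine obstacle: all of the combinatorial substance has already been discharged, on the numerator side by the bijection $\phi$ of Theorem~\ref{th:prop1} and on the denominator side by the Raney-lemma argument behind \eqref{eq:Pv1n}. The only points requiring a touch of care are bookkeeping ones. First, I would make explicit that "expected number of vertical steps" means the average over the uniform distribution on $\mathcal P_{\mathcal V}(1,n)$, so that the quotient of the aggregate vertical-step count by the path count is indeed the correct quantity. Second, I would note that the denominator $|\mathcal F_{\mathcal V}(1,n)| - |\mathcal F_{\mathcal V}(0,n)|$ is positive for $n\geq 1$ (equivalently $|\mathcal P_{\mathcal V}(1,n)|\neq 0$, since $U_N,V\in\mathcal V$ guarantee at least one such primary path), so that the expectation is well defined.
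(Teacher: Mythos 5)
Your argument is correct and is exactly the paper's own proof: express the expectation as $\#Steps(V \in \mathcal P_{\mathcal V}(1,n))/|\mathcal P_{\mathcal V}(1,n)|$, evaluate the numerator by Theorem~\ref{th:prop1} and the denominator by \eqref{eq:Pv1n}, and simplify. Your added remarks on the uniform distribution and the non-vanishing of $|\mathcal P_{\mathcal V}(1,n)|$ are sound bookkeeping that the paper leaves implicit.
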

\begin{proof}
The required number is the number of all vertical steps in the set of paths from $\mathcal P_{\mathcal V}(1,n)$ divided by the number of such paths. By Theorem~\ref{th:prop1}, this number is $|\mathcal F_{\mathcal V}(0,n)|/|\mathcal P_{\mathcal V}(1,n)|$. Applying \eqref{eq:Pv1n} we obtain the formula.
\end{proof}

\begin{corollary}
The expected number of steps in a path from $\mathcal P_{\mathcal V}(1,n)$ is equal to
\begin{equation}
	\label{eq:expnumsteps}
	n \left( 1 + \frac{ |\mathcal F_{\mathcal V}(0,n)|}{|\mathcal F_{\mathcal V}(1,n)| - |\mathcal F_{\mathcal V}(0,n)| } \right).
\end{equation}
\end{corollary}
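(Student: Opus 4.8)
The plan is to split the total number of steps into its vertical and non-vertical parts and then read off each part from the results already established. By definition the expected number of steps in a path from $\mathcal P_{\mathcal V}(1,n)$ is $\#Steps(\mathcal P_{\mathcal V}(1,n))/|\mathcal P_{\mathcal V}(1,n)|$, and since expectation is additive this equals the expected number of non-vertical steps plus the expected number of vertical steps.

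The key observation is that every path in $\mathcal P_{\mathcal V}(1,n)$ runs from $(0,0)$ to $(n,-1)$: each non-vertical step $S_k$ increases the first coordinate by exactly one, whereas each vertical step $V$ leaves it unchanged. Consequently every such path contains exactly $n$ non-vertical steps, so the expected number of non-vertical steps is $n$, with no averaging needed.

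It then remains to add the expected number of vertical steps, which is precisely the quantity computed in the preceding corollary, namely $n\,|\mathcal F_{\mathcal V}(0,n)|/(|\mathcal F_{\mathcal V}(1,n)|-|\mathcal F_{\mathcal V}(0,n)|)$. Summing the two contributions and factoring out $n$ gives the claimed expression \eqref{eq:expnumsteps}. Equivalently, one can proceed entirely by algebra: Theorem~\ref{th:prop2} (applied with $\mathcal S = \mathcal V$) gives $\#Steps(\mathcal P_{\mathcal V}(1,n)) = |\mathcal F_{\mathcal V}(1,n)|$, and dividing by $|\mathcal P_{\mathcal V}(1,n)| = \tfrac1n(|\mathcal F_{\mathcal V}(1,n)|-|\mathcal F_{\mathcal V}(0,n)|)$ from \eqref{eq:Pv1n} yields $n\,|\mathcal F_{\mathcal V}(1,n)|/(|\mathcal F_{\mathcal V}(1,n)|-|\mathcal F_{\mathcal V}(0,n)|)$, which one rewrites into the stated form by adding and subtracting $|\mathcal F_{\mathcal V}(0,n)|$ in the numerator. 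There is no real obstacle here; the only point requiring any care is the bookkeeping that the non-vertical steps number exactly $n$, which pins down the additive constant $n$ and hence the $1+\cdots$ shape of the final formula.
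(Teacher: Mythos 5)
Your proposal is correct, and both of your routes lead to the stated formula. Your second, ``purely algebraic'' route is exactly the paper's proof: the paper divides $\#Steps(\mathcal P_{\mathcal V}(1,n)) = |\mathcal F_{\mathcal V}(1,n)|$ (Theorem~\ref{th:prop2}) by $|\mathcal P_{\mathcal V}(1,n)| = \tfrac1n\big(|\mathcal F_{\mathcal V}(1,n)|-|\mathcal F_{\mathcal V}(0,n)|\big)$ from \eqref{eq:Pv1n} and rewrites. Your primary route is genuinely different in an instructive way: it never invokes Theorem~\ref{th:prop2}, but instead uses the deterministic fact that every path of $\mathcal P_{\mathcal V}(1,n)$ contains exactly $n$ non-vertical steps (each $S_k$ advances the abscissa by one), adds the expected number of vertical steps from the preceding corollary (which rests on Theorem~\ref{th:prop1} and \eqref{eq:Pv1n}), and uses additivity of expectation. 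This buys a small bonus: your decomposition $\#Steps(\mathcal P_{\mathcal V}(1,n)) = |\mathcal F_{\mathcal V}(0,n)| + n\,|\mathcal P_{\mathcal V}(1,n)|$ combined with \eqref{eq:Pv1n} re-derives Theorem~\ref{th:prop2} from Theorem~\ref{th:prop1}, so the bijection $\psi$ of Theorem~\ref{th:prop2} is not needed for this corollary at all, whereas the paper's one-line proof takes that theorem as input. The ``$1+\cdots$'' shape of \eqref{eq:expnumsteps} also becomes conceptually transparent in your version: the $1$ accounts for the $n$ forced non-vertical steps, and the fraction for the average surplus of vertical steps.
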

\begin{proof}
The required number is the total number of steps in the set of paths from $\mathcal P_{\mathcal V}(1,n)$ divided by the number of such paths. By Theorem~\ref{th:prop2}, this number is $|\mathcal F_{\mathcal V}(1,n)|/|\mathcal P_{\mathcal V}(1,n)|$. Applying \eqref{eq:Pv1n} we obtain the formula.
\end{proof}

\subsection{Finite set of steps}
Throughout the section we assume that the sets of possible steps $\mathcal S, \mathcal V, \mathcal N$, and $\mathcal L$ are finite and $K$ is the maximal integer such that $D_K$ belongs to those sets. Recall that $\mathcal L$ is defined in \eqref{eq:defN} with respect to $\mathcal V$. 
It is worth pointing out that a unified enumerative and asymptotic theory of lattice paths consisting of steps from $\mathcal N$ is developed by Banderier and Flajolet \cite{Banderier2002} and is associated with the so-called kernel method.

\begin{proposition}
\label{prop:P0}
If $K=1$, then for $n\geq 0$ we have
\begin{align}
	|\mathcal P_{\mathcal V}(0,n)| = \sum_{j=0}^n (-1)^{n-j} |\mathcal P_{\mathcal V}(1,j)|.
\end{align}
\end{proposition}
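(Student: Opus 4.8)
The plan is to reduce the alternating-sum identity to a one-step recurrence and then invert it. Write $a_n = |\mathcal P_{\mathcal V}(1,n)|$ and $b_n = |\mathcal P_{\mathcal V}(0,n)|$, with the convention $b_{-1}=0$. The target identity $b_n = \sum_{j=0}^n (-1)^{n-j} a_j$ is, in generating-function form, the statement that dividing the series for $(a_n)$ by $1+x$ returns the series for $(b_n)$. Equivalently, it suffices to establish the recurrence
\[
	a_n = b_n + b_{n-1}, \qquad (n\geq 0),
\]
and then perform a telescoping inversion.

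First I would prove the recurrence by a last-step decomposition of a path $\pi\in\mathcal P_{\mathcal V}(1,n)$. Such a $\pi$ runs from $(0,0)$ to $(n,-1)$ with every point except the last lying on or above the $x$-axis, so its final step is the unique step carrying it from the axis down to level $-1$. This is exactly where the hypothesis $K=1$ is used: the only steps that decrease the height are $V$ and $D_1$, each dropping precisely one level, so the point immediately before the final step must sit at height $0$. Deleting that final step leaves a path all of whose points lie on or above the axis and which ends at height $0$, hence a $0$-primary path. If the final step is $V$ (which lies in $\mathcal V$ by definition of $\mathcal V$), the prefix ends at $(n,0)$ and belongs to $\mathcal P_{\mathcal V}(0,n)$; if it is $D_1$ (which lies in $\mathcal V$ since $K=1$ forces $D_1\in\mathcal V$), the prefix ends at $(n-1,0)$ and belongs to $\mathcal P_{\mathcal V}(0,n-1)$. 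These two cases are disjoint, exhaustive, and manifestly reversible, which yields $a_n = b_n + b_{n-1}$.

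Then I would invert this recurrence. Substituting $a_j = b_j + b_{j-1}$ into $\sum_{j=0}^n (-1)^{n-j} a_j$ splits it into two sums; re-indexing the second by $j\mapsto j+1$ and using $(-1)^{n-1-j} = -(-1)^{n-j}$ together with $b_{-1}=0$ makes the two sums telescope, leaving only the single term $b_n$. Equivalently, the recurrence reads $P_1(x) = (1+x)\,P_0(x)$ for the generating functions $P_m(x)=\sum_{n\geq 0}|\mathcal P_{\mathcal V}(m,n)|x^n$, and since $\sum_{j=0}^n(-1)^{n-j}a_j$ is the coefficient of $x^n$ in $(1+x)^{-1}P_1(x)$, this coefficient is exactly $b_n$. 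I would also record the base case $n=0$, where $a_0 = 1$ (the single path $V$), $b_0 = 1$ (the empty path), and $b_{-1}=0$, consistent with the recurrence.

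The main obstacle is the clean justification that the descent to level $-1$ must originate from height $0$, which is precisely the point where $K=1$ enters. For $K\geq 2$ a step $D_j$ could reach $-1$ from height $j-1\geq 1$, the deleted-step prefix would no longer be a $0$-primary path, and the simple recurrence $a_n = b_n + b_{n-1}$ would fail; so the argument must make explicit that under $K=1$ no such alternative descent exists.
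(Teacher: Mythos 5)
Your proof is correct and takes essentially the same route as the paper: when $K=1$ the last step of a path in $\mathcal P_{\mathcal V}(1,n)$ must be $V$ or $D_1$, giving a two-term recurrence that is then iterated (telescoped) into the alternating sum. Note that your recurrence $|\mathcal P_{\mathcal V}(1,n)| = |\mathcal P_{\mathcal V}(0,n)| + |\mathcal P_{\mathcal V}(0,n-1)|$ is the correct one, whereas the paper misprints it as $|\mathcal P_{\mathcal V}(1,n)| = |\mathcal P_{\mathcal V}(1,n-1)| + |\mathcal P_{\mathcal V}(0,n)|$ (deleting a final $D_1$ leaves a path ending at height $0$, not $-1$); only your form actually iterates to the stated identity, as the {\L}ukasiewicz data in the paper confirm, e.g. $9 = 7+2$ but $9 \neq 3+7$.
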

\begin{proof}
If $K=1$ then there is no step $D_p$ in $\mathcal V$ such that $p>1$. Thus the last step of any path from $\mathcal P_{\mathcal V}(1,n)$ is either $V$ or $D_1$. It follows that
$|\mathcal P_{\mathcal V}(1,n)| = |\mathcal P_{\mathcal V}(1,n-1)| + |\mathcal P_{\mathcal V}(0,n)|$. Moving $|\mathcal P_{\mathcal V}(1,n-1)|$ to the left-hand side we obtain a recurrence relation for $|\mathcal P_{\mathcal V}(0,n)|$.
Iterating the above gives the required sum.
\end{proof}

\begin{proposition}
If $K=m$ and $K\geq 2$, then for $n\geq 1$ we have
\[
	|\mathcal P_{\mathcal V}(K,n)| = |\mathcal P_{\mathcal V}(0,n-1)|.
\]
\end{proposition}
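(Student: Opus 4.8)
The plan is to construct an explicit bijection between $\mathcal P_{\mathcal V}(K,n)$ and $\mathcal P_{\mathcal V}(0,n-1)$ by peeling off the last step of each path. The whole argument rests on a single structural observation: because $K$ is the largest available down step and $K\geq 2$, the final step of any path in $\mathcal P_{\mathcal V}(K,n)$ is forced to be $D_K$.

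First I would pin down the last step. Take $\mu\in\mathcal P_{\mathcal V}(K,n)$; it runs from $(0,0)$ to $(n,-K)$ and every point except the terminal one lies on or above the $x$-axis. If the last step were vertical, the penultimate point would sit at height $-K+1\leq -1<0$, contradicting the primary condition since it is not the terminal point; hence the last step is some non-vertical $S_k$. The point preceding it then has height $-K-k$, which must be nonnegative, so $k\leq -K$; but $D_K$ is the maximal down step of $\mathcal V$, whence $k\geq -K$. Combining the two inequalities forces $k=-K$, i.e. the last step is $D_K$, and the penultimate point is exactly $(n-1,0)$.

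With this in hand the bijection is immediate. Writing $\mu=\mu' D_K$, the prefix $\mu'$ runs from $(0,0)$ to $(n-1,0)$ and all of its points lie on or above the $x$-axis (they are among the non-terminal points of $\mu$), so $\mu'\in\mathcal P_{\mathcal V}(0,n-1)$. Conversely, appending $D_K$ to any $\mu'\in\mathcal P_{\mathcal V}(0,n-1)$ produces a path ending at $(n,-K)$ whose only sub-axis point is the terminal one, hence an element of $\mathcal P_{\mathcal V}(K,n)$. The two maps are clearly mutually inverse, giving the claimed equality of cardinalities; the boundary case $n=1$ just records that $\mathcal P_{\mathcal V}(K,1)=\{D_K\}$ corresponds to the empty path in $\mathcal P_{\mathcal V}(0,0)$.

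The only delicate point — and the step I would write most carefully — is the forcing of the last step: it is precisely here that the hypotheses $K\geq 2$ (ruling out a vertical final step) and the maximality of $K$ (ruling out every down step other than $D_K$) are both used. Once that is established, nothing further is required, which also explains why the $K=1$ case handled in Proposition~\ref{prop:P0} behaves differently: there a vertical final step is permissible, so one obtains a recurrence rather than a one-to-one correspondence.
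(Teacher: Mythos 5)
Your proof is correct and follows exactly the paper's approach: the paper likewise observes that the last step of any path in $\mathcal P_{\mathcal V}(K,n)$ must be $D_K$ and removes it to land in $\mathcal P_{\mathcal V}(0,n-1)$. You merely spell out in detail (using $K\geq 2$ and the maximality of $K$) what the paper states as a one-line observation.
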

\begin{proof}
This follows from the observation that the last step of any $m$-primary $\mathcal V$-path running from $(0,0)$ to $(n,-m)$, where $m=K$, is $D_K$. Removing this step we obtain $0$-primary $\mathcal V$-path running from $(0,0)$ to $(n-1,0)$.
\end{proof}

Let us define two ordinary generating functions
\begin{equation}
	P_{\mathcal S,m}(x) = \sum_{n\geq 0} |\mathcal P_{\mathcal S}(m,n)| x^n,
	\quad
	W_{\mathcal S,m}(x) = \sum_{n\geq 0} |\mathcal W_{\mathcal S}(m,n)| x^n.
\end{equation}
For simplicity of notation, we write $F_m(x)$ instead of $F_{\mathcal S,m}(x)$ for fixed $\mathcal S$.

\begin{proposition}
\label{prop:gener}
Let $1\leq m\leq K$, then
\begin{equation}
\begin{split}
	\label{eq:gener}
	P_{\mathcal N, 0}(x) 
	&= 1 + \delta_0 x P_{\mathcal N,0}(x)
	+ x P_{\mathcal N,0}(x) \sum_{S_k\in\mathcal N_1}
	\sum_{d=1}^{k} \sum_{M}
	\prod_{j=1}^d 
	(P_{\mathcal N, m_j}(x)-1),
	\\
	P_{\mathcal N, m}(x)
	&= 1 + \delta_m x + x 
	\sum_{U_k\in\mathcal N_0} 
	\sum_{d=1}^{k+1}
	\sum_{M}
	\prod_{j=1}^d 
	(P_{\mathcal N, m_j}(x)-1),
\end{split}
\end{equation}
where $\delta_m = 1$ if $S_{-m}\in\mathcal N$, and $\delta_m = 0$ if $S_{-m}\notin \mathcal N$, and the summation range $M$ is over all solutions of $m_1+\cdots+m_d=k+m$ such that $1 \leq m_1,\ldots,m_{d-1}\leq K$ and $\max(m,1)\leq m_d\leq K$.
\end{proposition}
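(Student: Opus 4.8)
The plan is to read both identities off a single first-step decomposition of primary $\mathcal N$-paths, the one set up in Section~\ref{sec:pre}, specialized to the case without vertical steps (so every $\lambda_i=0$ and the shape of a path collapses to $(m,d,k)$ with leading step $U_k$). First I would record the base contributions. For $m=0$ the empty path gives the additive constant $1$; for $m\geq1$ there is no path of length $0$, so the additive $1$ is purely the normalization $P_{\mathcal N,m}(0)=1$ (under which $P_{\mathcal N,m}(x)-1$ enumerates the \emph{nonempty} $m$-primary paths), while the unique one-step path is $D_m$, present iff $S_{-m}\in\mathcal N$, contributing $\delta_m x$.

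For the recursive part I take a path with at least two steps. Its first step cannot be a down step (that would push an interior vertex below the axis), so it is an up step $U_k$, supplying a factor $x$ and ranging $U_k$ over the up steps of $\mathcal N$. After $U_k$ the path sits at height $k$, and I cut it at its successive leftmost record lows, exactly the points $\Pi(\mu)$ of Section~\ref{sec:pre}; this writes the remainder as a concatenation $\mu^{(1)}\cdots\mu^{(d)}$ of nonempty primary subpaths, where $\mu^{(j)}$ is $m_j$-primary. Because the descent to each new record low is effected by the single down step that first breaks the current minimum, each part obeys $1\leq m_j\leq K$; and since every part is $\geq1$, the number of parts cannot exceed $k+1$. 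The factor contributed by $\mu^{(j)}$ is $P_{\mathcal N,m_j}(x)-1$, so summing over the admissible compositions $M$ of the total descent gives the inner sum $\sum_{d}\sum_{M}\prod_{j}(P_{\mathcal N,m_j}(x)-1)$.

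The one genuinely delicate point, and the step I expect to be the main obstacle, is the asymmetric bookkeeping of the final descent, which is exactly what splits the statement into two formulas. For $m\geq1$ the path terminates strictly below the axis, so its passage below $0$ occurs only at the very end and is entirely absorbed into $\mu^{(d)}$; this forces the last subpath to begin on or above the axis, whence its descent is $m_d=\ell_{d-1}+m\geq m$, together with $m_d\leq K$ from the breaking step, giving the constraint $\max(m,1)\leq m_d\leq K$ with total descent $k+m$ and \emph{no} trailing factor. For $m=0$, by contrast, the subpaths only carry the path from height $k$ back to its first return to level $0$ (so $\sum m_j=k$, every $m_j\in[1,K]$, and $d\leq k$), after which the path is free to continue along the axis as an arbitrary $0$-primary path; this continuation is precisely the extra factor $P_{\mathcal N,0}(x)$, and the degenerate opening $U_0$ (when $U_0\in\mathcal N$) must be peeled off separately to yield the term $\delta_0\,x\,P_{\mathcal N,0}(x)$.

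Assembling the pieces---leading $x$ for the first step, the sum over $d$ and over the compositions $M$, the product of the subpath factors $P_{\mathcal N,m_j}(x)-1$, and (only for $m=0$) the trailing factor $P_{\mathcal N,0}(x)$---reproduces the two displayed equations. The remaining verification is routine: one checks that the ranges of $d$ and the constraints defining $M$ are exactly those generated by the decomposition, and that no path is counted twice, which follows from the uniqueness of the record-low decomposition established in Section~\ref{sec:pre}.
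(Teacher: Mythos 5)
Your proposal is correct and follows essentially the same route as the paper's own (much terser) proof: the first-step decomposition of Section~\ref{sec:pre} specialized to paths without vertical steps, with the base cases $1$ and $\delta_m x$, the constraints $1\leq m_j\leq K$ and $m_d\geq\max(m,1)$ on the parts, and the trailing factor $P_{\mathcal N,0}(x)$ (plus the peeled-off $\delta_0 x P_{\mathcal N,0}(x)$ term) in the $m=0$ case. One cosmetic remark: the bound $d\leq k+1$ for $m\geq 2$ needs $m_d\geq m$, not merely that all parts are $\geq 1$, but since you establish $m_d\geq m$ immediately afterwards the argument is complete.
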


\begin{proof}
It follows from the decomposition of an $m$-primary $\mathcal N$-path.
By convention, we have one path of length zero. If $S_{-m}\in\mathcal N$ then we have one path of length one. Let $n\geq 2$ and take any path from $\mathcal P_{\mathcal N}(m,n)$. The first step of this path is an up step, let say $U_k$.
If $m\geq 1$ then the entire path $\mu$ is decomposable into $U_k$ and some number, let say $d$, of shorter and nonempty primary $\mathcal N$-paths $\mu^{(1)}, \ldots, \mu^{(d)}$. Suppose that $\mu^{(i)}\in \mathcal P_{\mathcal N}(m_i,n_i)$, then the numbers $m_1,\ldots,m_d$ are positive integers no greater than $K$. Further, $m_d$ is no smaller than $m$. Finally, if $m=0$, then the path is decomposable as above with some number (possibly zero) of additional $0$-primary $\mathcal N$-paths.
\end{proof}

\begin{proposition}
Let $1\leq m\leq K$, then
\begin{equation}
\begin{split}
	W_{\mathcal L, 0}(x) 
	&= 1 + \delta_0 x W_{\mathcal L,0}(x)
	+ x W_{\mathcal L,0}(x) \sum_{k=1}^N
	\sum_{d=1}^{k} 
	|\mathcal H_{\mathcal V}(0,d,k)|	
	\sum_{M}
	\prod_{j=1}^d 
	(W_{\mathcal L, m_j}(x)-1),
	\\
	W_{\mathcal L,m}(x)
	&= 1 + \delta_m x + x \sum_{k=0}^N \sum_{d=1}^{k+1}  
	|\mathcal H_{\mathcal V}(m,d,k)|   
	\sum_{M}
	\prod_{j=1}^d (W_{\mathcal L, m_j}(x)-1),
\end{split}
\end{equation}
where $\delta_m = |\mathcal L_{-m}|$ if $m\in\{0,1\}$, $\delta_m = 1$ if $D_m\in\mathcal L$, and $\delta_m = 0$ if $D_m\notin\mathcal L$, for $m\geq 2$. Further, the summation range $M$ is over all solutions of $m_1+\cdots+m_d=k+m$ such that $1 \leq m_1,\ldots,m_{d-1}\leq K$ and $\max(m,1)\leq m_d\leq K$.
\end{proposition}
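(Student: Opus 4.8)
The plan is to follow the proof of Proposition~\ref{prop:gener} verbatim in spirit, but to carry the weights through the decomposition. First I would unwind the definition: by construction $W_{\mathcal L,m}(x) = \sum_{n\geq 0}\big(\sum_{\mu\in\mathcal P_{\mathcal L}(m,n)} w_{\mathcal V}(\mu)\big)x^n$, so the coefficient of $x^n$ is the total weight of all $m$-primary $\mathcal L$-paths of length $n$. The structural input is the unique decomposition of Section~\ref{sec:pre} specialized to $\mathcal L$-paths, which contain no vertical steps, so every $\lambda_i=0$: a path $\mu$ of length at least two factors as $\mu = U_k^{m,d}\,\mu^{(1)}\cdots\mu^{(d)}$ with each $\mu^{(i)}$ a nonempty $m_i$-primary $\mathcal L$-path, the indices satisfying $m_1+\cdots+m_d=k+m$ together with the bounds recorded in $M$. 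Since $w_{\mathcal V}$ is a product over steps, the weight factors as $w_{\mathcal V}(\mu)=w_{\mathcal V}(U_k^{m,d})\prod_{i=1}^d w_{\mathcal V}(\mu^{(i)})$, and by \eqref{eq:weight} the leading factor is exactly $|\mathcal H_{\mathcal V}(m,d,k)|$, a quantity depending only on the shape $(m,d,k)$.

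Next I would read this off as a generating-function identity. Summing the weight over all weighted paths of a fixed shape decouples: the leading step contributes the scalar $|\mathcal H_{\mathcal V}(m,d,k)|$ and one factor $x$, while each nonempty $m_j$-primary subpath contributes the factor $(W_{\mathcal L,m_j}(x)-1)$, the $-1$ excluding the empty path. Summing over all admissible compositions $M$ of $k+m$, over $d$, and over $k$ then reproduces the double (respectively triple) sums in the statement. The bounds $\sum_{d=1}^{k}$ for $m=0$ and $\sum_{d=1}^{k+1}$ for $m\geq 1$ come from $m_1+\cdots+m_d=k+m$ with $m_1,\ldots,m_{d-1}\geq 1$ and $m_d\geq\max(m,1)$, while the ranges $k=1,\ldots,N$ and $k=0,\ldots,N$ reflect that $\mathcal L$ contains precisely the up steps $U_0,\ldots,U_N$ (and that for $m=0$ the step $U_0$ is treated separately).

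The atomic cases supply the remaining terms. The empty path gives the constant $1$. A length-one $m$-primary $\mathcal L$-path with $m\geq 1$ is a single down step $D_m$, whose weight $w_{\mathcal V}(D_m)$ equals $|\mathcal V_{\geq -1}|$ for $m=1$ and $1$ for $2\leq m\leq K$; this is exactly the coefficient $\delta_m$ in the term $\delta_m x$. For $m=0$ the special decomposition $\mu=U_0^{0,0}\mu^{(0)}$, with $\mu^{(0)}$ a $0$-primary $\mathcal L$-path, isolates the term $\delta_0 x\,W_{\mathcal L,0}(x)$, where $\delta_0=w_{\mathcal V}(U_0^{0,0})=|\mathcal H_{\mathcal V}(0,0,0)|$ by \eqref{eq:weight} and Proposition~\ref{prop:h}.

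I expect the only genuine work to be bookkeeping of two points. First, one must confirm that the weight of the leading up step is truly independent of the subpaths that follow it, so that the shape-weight $|\mathcal H_{\mathcal V}(m,d,k)|$ can be pulled outside the sum over compositions; this is immediate from the definition \eqref{eq:weight}, which assigns the weight in terms of the shape alone. Second, one must check that the down-step weights, attached to the atomic length-one subpaths, propagate correctly through every factor $(W_{\mathcal L,m_j}(x)-1)$ rather than appearing explicitly in the recursion, so that the recursion for $W_{\mathcal L,m}$ closes on itself just as the recursion for $P_{\mathcal N,m}$ does in Proposition~\ref{prop:gener}. Once these are verified the identity is precisely the generating-function translation of the weighted decomposition.
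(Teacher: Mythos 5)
Your proposal is correct and takes essentially the same route as the paper: the paper's own proof simply substitutes the up-step weights $|\mathcal H_{\mathcal V}(m,d,k)|$ and the length-one weights $\delta_m$ into the functional equation \eqref{eq:gener} of Proposition~\ref{prop:gener}, which is exactly the weighted-decomposition translation you spell out. One caution: as literally written, your $m=0$ decomposition (with all $m_j\geq 1$ forced by the bounds in $M$) omits the possibly empty trailing $0$-primary subpath after the first return to the $x$-axis, which is what produces the outer factor $W_{\mathcal L,0}(x)$ in the first equation; this is covered by the proof of Proposition~\ref{prop:gener} that you invoke, but it should be stated explicitly.
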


\begin{proof}
The set of steps $\mathcal L$ contains steps $U_N, U_{N-1},\ldots, U_0,D_1$, thus the range summation of the first sum is over $k$ from one (for $m=0$) or zero (for $m\neq 1$) up to $N$. The weight of any step $U_k^{m,d}$ is the size of $\mathcal H_{\mathcal V}(m,d,k)$. Thus substituting that numbers into the functional equation \eqref{eq:gener} we obtain the formula. 
\end{proof}

\begin{proposition}
\label{prop:generV}
Let $1\leq m\leq K$, then
\begin{equation}
\begin{split}
	P_{\mathcal V, 0}(x) 
	&= 1 + \delta_0 x P_{\mathcal V,0}(x)
	+ x P_{\mathcal V,0}(x) \sum_{k=1}^N
	\sum_{d=1}^{k} 
	|\mathcal H_{\mathcal V}(0,d,k)|	
	\sum_{M}
	\prod_{j=1}^d 
	(P_{\mathcal V, m_j}(x)-1),
	\\
	P_{\mathcal V,m}(x)
	&= 1 + \delta_m x + x \sum_{k=0}^N \sum_{d=1}^{k+1}  
	|\mathcal H_{\mathcal V}(m,d,k)|   
	\sum_{M}
	\prod_{j=1}^d (P_{\mathcal V, m_j}(x)-1),
\end{split}	
\end{equation}
where $\delta_m = |\mathcal V_{\geq -m}|$ if $m\in\{0,1\}$, $\delta_m = 1$ if $D_m\in\mathcal V$, and $\delta_m = 0$ if $D_m\notin\mathcal V$, for $m\geq 2$.
Further, the summation range $M$ is over all solutions of $m_1+\cdots+m_d=k+m$ such that $1 \leq m_1,\ldots,m_{d-1}\leq K$ and $\max(m,1)\leq m_d\leq K$.
\end{proposition}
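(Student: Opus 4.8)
The plan is to transport the functional equation from the weighted paths rather than re-derive it from scratch. By Theorem~\ref{th:biject} we have $|\mathcal P_{\mathcal V}(m,n)| = |\mathcal W_{\mathcal L}(m,n)|$ for all $m\geq 0$ and $n\geq 1$, so the ordinary generating functions $P_{\mathcal V,m}(x)$ and $W_{\mathcal L,m}(x)$ have equal coefficients of $x^n$ for every $n\geq 1$. Keeping the convention adopted in the proof of Proposition~\ref{prop:gener}, that each of these series carries a formal length-zero term equal to $1$, their constant terms agree as well, and hence $P_{\mathcal V,m}(x) = W_{\mathcal L,m}(x)$ as power series for every $m\geq 0$.

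With this identity in hand I would simply substitute $P_{\mathcal V,m_j}(x)$ for $W_{\mathcal L,m_j}(x)$ on both sides of the functional equation just established for $W_{\mathcal L,m}$. The weights $|\mathcal H_{\mathcal V}(m,d,k)|$, the index ranges over $k$ and $d$, and the constraint set $M$ all depend only on $\mathcal V$ and are carried over unchanged, so the only point that needs checking is that the two constant-plus-linear coefficients $\delta_m$ coincide. For $m\geq 2$ the two forms read $\mathbf 1[D_m\in\mathcal L]$ versus $\mathbf 1[D_m\in\mathcal V]$; these agree because, by \eqref{eq:defN}, the only down step that $\mathcal L$ gains over $\mathcal V$ is $D_1$, so $D_m\in\mathcal L\Leftrightarrow D_m\in\mathcal V$ once $m\geq 2$. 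For $m\in\{0,1\}$ I would evaluate the weighted length-one $\mathcal L$-paths directly: the single $\mathcal L$-path to $(1,0)$ is $U_0$ with weight $w(U_0^{0,0}) = |\mathcal H_{\mathcal V}(0,0,0)| = |\mathcal V_{\geq 0}|$, and the single one to $(1,-1)$ is $D_1$ with weight $w(D_1) = |\mathcal V_{\geq -1}|$ by \eqref{eq:weight}; these are exactly $|\mathcal V_{\geq -m}|$, matching the stated $\delta_0$ and $\delta_1$.

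Should a self-contained argument be preferred, the same equation can be obtained by repeating the proof of Proposition~\ref{prop:gener} with the $\mathcal V$-path decomposition of Section~\ref{sec:pre}. An $m$-primary $\mathcal V$-path of shape $(m,d,k)$ factors as $U_h V^{\lambda_0}\mu^{(1)}V^{\lambda_1}\cdots\mu^{(d)}V^{\lambda_d}$, and for each fixed shape the pair $(h,\lambda)$ ranges over $\mathcal H_{\mathcal V}(m,d,k)$, producing the multiplicity $|\mathcal H_{\mathcal V}(m,d,k)|$. Since vertical steps add no horizontal length, the leading $U_h$ contributes one factor $x$ and the non-empty subpaths contribute $\prod_{j=1}^d(P_{\mathcal V,m_j}(x)-1)$, while the vertical balance $m_1+\cdots+m_d=k+m$ with $1\leq m_1,\ldots,m_{d-1}\leq K$ and $\max(m,1)\leq m_d\leq K$ furnishes the range $M$; the separate $U_0^{0,0}$ case of that decomposition produces the term $\delta_0 x P_{\mathcal V,0}(x)$ in the $m=0$ line.

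I expect the genuine difficulty to be bookkeeping at the boundary $n=0$ rather than anything conceptual. The geometric counts $|\mathcal P_{\mathcal V}(1,0)|=1$ (the path $V$) and $|\mathcal W_{\mathcal L}(1,0)|=0$ disagree, and for $m\geq 2$ the equation's constant term $1$ exceeds the geometric count $0$, so one must state plainly that the identity $P_{\mathcal V,m}=W_{\mathcal L,m}$ and the leading ``$+1$'' are read through the formal length-zero convention. The other delicate point is the composition length inside $\mathcal H_{\mathcal V}(m,d,k)$, namely $d+1$ parts for $m\in\{0,1\}$ against $d$ parts for $m\geq 2$; this is precisely what keeps $\delta_m$ and the product ranges consistent across the two regimes, and it is where an off-by-one error would most easily creep in.
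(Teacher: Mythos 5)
Your proposal is correct and follows essentially the same route as the paper: the paper's proof is precisely the one-line observation that Theorem~\ref{th:biject} makes $P_{\mathcal V,m}(x)$ and $W_{\mathcal L,m}(x)$ the same generating functions, so the functional equation established for $W_{\mathcal L,m}$ carries over verbatim. Your additional checks---that the $\delta_m$ coefficients agree (via $w(U_0^{0,0})=|\mathcal H_{\mathcal V}(0,0,0)|=|\mathcal V_{\geq 0}|$, $w(D_1)=|\mathcal V_{\geq -1}|$, and $D_m\in\mathcal L\Leftrightarrow D_m\in\mathcal V$ for $m\geq 2$) and that the constant terms are read through the formal length-zero convention---are details the paper leaves implicit, and they are handled correctly.
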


\begin{proof}
By Theorem~\ref{th:biject}, the size of $\mathcal W_{\mathcal L}(m,n)$ is equal to the size of $\mathcal P_{\mathcal V}(m,n)$, thus $P_{\mathcal V,m}(x)$ and $W_{\mathcal L,m}(x)$ are the same generating functions.
\end{proof}

\subsection{Riordan arrays}
\label{sec:riordan}

The Riordan group \cite{Shapiro3, Sprugnoli1} is a set of infinite lower-triangular matrices defined as follows. A \emph{proper Riordan array} is a couple $(g(x), f(x))$, where $g(x) = \sum_{n\geq 0} g_n x^n$ with $g_0\neq 0$ and $f(x) = \sum_{n\geq 1} f_n x^n$ with $f_1\neq 0$. With the proper Riordan array we associate the matrix, denoted by $(g,f)$, whose $(i,j)$th element is given by $[x^i] g(x) f(x)^j$, for $i,j\geq 0$.

\begin{proposition}
\label{prop:riordan}
The array 
\begin{equation}
	\label{eq:riordan}
	D_{\mathcal V} = \left( \frac{1}{1-y}, \frac{y}{1-y} \sum_{S_k\in\mathcal V} y^{N-k} \right)
\end{equation}
is the proper Riordan array, whose $(i,j)$th element, denoted by $d_{i,j}$, is the number of free $\mathcal V$-paths running from $(0,0)$ to $(j,(N+1)j-i)$. That is, $d_{i,j} = |\mathcal F_{\mathcal V}(i-(N+1)j,j)|$.
\end{proposition}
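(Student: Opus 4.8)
The plan is to establish the two assertions separately: first that $D_{\mathcal V}$ is a \emph{proper} Riordan array, and then that its $(i,j)$th entry counts the free $\mathcal V$-paths as claimed. Both parts are essentially bookkeeping built on Proposition~\ref{prop:numfree}.

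For properness I would write $g(y) = 1/(1-y)$, so that $g_0 = g(0) = 1 \neq 0$, and examine the second component $f(y) = \frac{y}{1-y}\sum_{S_k\in\mathcal V} y^{N-k}$. Since $U_N\in\mathcal V$ is the maximal up step, the sum $\sum_{S_k\in\mathcal V} y^{N-k}$ has lowest-degree term $y^{N-N}=1$, while every other step contributes a strictly positive power of $y$. Hence $f(y)$ begins with $f_1 y = y$, so $f_1 = 1 \neq 0$, and both requirements in the definition of a proper Riordan array are met.

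For the entries, I would start from the definition $d_{i,j} = [y^i]\, g(y) f(y)^j$, substitute the explicit $g$ and $f$, and pull the factor $y^j$ out of $f(y)^j$ to get
\[
d_{i,j} = [y^i] \frac{y^j}{(1-y)^{j+1}} \Big(\sum_{S_k\in\mathcal V} y^{N-k}\Big)^j = [y^{i-j}] \frac{1}{(1-y)^{j+1}} \Big(\sum_{S_k\in\mathcal V} y^{N-k}\Big)^j .
\]
The key observation is that the vertical step $V$ is not of the form $S_k$, so the sum over $S_k\in\mathcal V$ coincides with the sum over $S_k\in\mathcal N$. I would then compare the right-hand side with \eqref{eq:fznm2} of Proposition~\ref{prop:numfree} evaluated at $n = j$ and $m = i-(N+1)j$: since $Nn + m = Nj + i - (N+1)j = i - j$, that formula produces precisely the coefficient displayed above, yielding $d_{i,j} = |\mathcal F_{\mathcal V}(i-(N+1)j,j)|$.

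I do not expect any deep obstacle; the single point that must be handled with care is the index shift $Nn+m = i-j$ together with the distinction between summing over $\mathcal V$ and over $\mathcal N$, since misaligning either would break the coefficient extraction. As a sanity check I would also record the geometric reading: an endpoint $(j,(N+1)j-i)$ has negated height $-m = (N+1)j - i$, i.e.\ $m = i-(N+1)j$, which is exactly the parameter appearing in $|\mathcal F_{\mathcal V}(i-(N+1)j,j)|$ and confirms the stated correspondence.
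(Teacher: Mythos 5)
Your proof is correct and follows essentially the same route as the paper: the paper likewise extracts the factor $y^j$ from $f(y)^j$ and invokes the generating function identity \eqref{eq:genery} (the coefficient form of which is \eqref{eq:fznm2}) with the index shift $Nj + (i-(N+1)j) = i-j$. The only difference is that you spell out the properness check ($g_0=1$, and $f_1=1$ since $U_N\in\mathcal V$) and the identification of $\sum_{S_k\in\mathcal V}$ with $\sum_{S_k\in\mathcal N}$, details the paper leaves implicit.
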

\begin{proof}
By \eqref{eq:genery}, since $g(x)=1/(1-x)$ and $f(x) = x(\sum_{S_k\in\mathcal V}x^{N-k})/(1-x)$, we conclude that $(g,f)$ is the proper Riordan array, and 
\[
	d_{i,j} = [y^i] \frac{1}{1-y} \left( \frac{y}{1-y}\sum_{S_p\in\mathcal S}y^{N-p} \right)^j
	= |\mathcal F_{\mathcal V}(i-(N+1)j,j)|.
\]
\end{proof}

\begin{corollary}
Let $a(x)$ be the generating function of the sequence $(a_n)_{n\geq 0}$. Then
\[
	\sum_{k\geq 0} d_{n,k} a_k = [y^n] \frac{1}{1-y} a\left( \frac{y}{1-y} \sum_{S_k\in\mathcal V} y^{N-k} \right),
\]
where $d_{n,k} = |\mathcal F_{\mathcal V}(n-(N+1)k,k)|$. That is, 
$d_{n,k}$ is the number of free $\mathcal S$-paths running from $(0,0)$ to $(k, Nk+k-n)$.
\end{corollary}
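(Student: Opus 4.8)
The plan is to recognize the stated identity as the fundamental theorem of Riordan arrays applied to the specific array $D_{\mathcal V}$ established in Proposition~\ref{prop:riordan}. Writing $g(y) = 1/(1-y)$ and $f(y) = \frac{y}{1-y}\sum_{S_k\in\mathcal V} y^{N-k}$, that proposition already tells us that $(g,f)$ is a proper Riordan array and that $d_{n,k} = [y^n]\, g(y) f(y)^k$ for all $n,k\geq 0$. The whole argument then reduces to interchanging a summation with the coefficient-extraction operator $[y^n]$, together with the identification of $d_{n,k}$ already provided by the proposition.

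First I would substitute the Riordan-array expression for $d_{n,k}$ into the left-hand side, obtaining
\[
	\sum_{k\geq 0} d_{n,k}\, a_k = \sum_{k\geq 0} a_k\, [y^n]\, g(y) f(y)^k.
\]
I would then pull the coefficient operator and the common factor $g(y)$ outside the sum, yielding $[y^n]\, g(y) \sum_{k\geq 0} a_k f(y)^k = [y^n]\, g(y)\, a\big(f(y)\big)$, and finally substitute the explicit $g$ and $f$ to recover the claimed formula. The identification $d_{n,k} = |\mathcal F_{\mathcal V}(n-(N+1)k,k)|$ in the last sentence of the statement is then immediate from Proposition~\ref{prop:riordan}.

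The only point requiring care — and the step I would present most carefully — is the legitimacy of exchanging $\sum_{k}$ with $[y^n]$ in the ring of formal power series. This is guaranteed by the fact that $f$ is the multiplier series of a \emph{proper} Riordan array, so $\ord f \geq 1$; the leading factor $y$ forces a zero constant term. Consequently $g(y) f(y)^k$ has order at least $k$, so that $[y^n]\, g(y) f(y)^k = 0$ for $k>n$ and only the finitely many terms with $k\leq n$ contribute to the coefficient of $y^n$. Hence the composition $a(f(y))$ is a well-defined formal power series and the interchange introduces no convergence obstruction. This observation turns the formal manipulation above into a rigorous identity, and the substitution of $g(y)=1/(1-y)$ and $f(y)=\frac{y}{1-y}\sum_{S_k\in\mathcal V}y^{N-k}$ completes the proof.
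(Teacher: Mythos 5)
Your proposal is correct and follows essentially the same route as the paper: the paper's proof is a one-line appeal to the fundamental theorem of Riordan arrays (Sprugnoli \cite[Th.~1.1]{Sprugnoli1}) applied to the array $D_{\mathcal V}$ of Proposition~\ref{prop:riordan}, and your argument is exactly the standard proof of that cited theorem, with the correct key observation that properness ($\mathrm{ord}\, f \geq 1$) makes $[y^n]\, g(y) f(y)^k$ vanish for $k>n$, so the sum-coefficient interchange is legitimate. The only difference is that you prove the cited result inline rather than referencing it, which makes the argument self-contained but introduces no new idea.
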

\begin{proof}
It follows directly from the properties of the Riordan arrays, see e.g. Sprugnoli \cite[Th.1.1]{Sprugnoli1}.
\end{proof}

\begin{example}
If $\mathcal V = \{V, U_1, U_0 \}$, then
\[
	D_{\mathcal V} = \left( \frac{1}{1-y}, \frac{y + y^2}{1-y}  \right),
	\qquad
	d_{i,j} = [y^{i-j}] \frac{\left( 1+y \right)^j}{(1-y)^{j+1}} 
	= D(j,i-j),
\]
where $D(i,j)$ is the $(i,j)$th Delannoy number \cite{Delannoy, stanton}. Note that $D(i,j)$ is the number of paths running from $(0,0)$ to $(i,j)$ consisting of steps $(1,0)$, $(1,1)$, and $(0,1)$. Such paths are called Delannoy paths in the literature.
By Proposition~\ref{prop:riordan}, we have
\[
	d_{i,j} = |\mathcal F_{\mathcal V}(i-2j,j)| = D(j,i-j).
\]
Therefore, there is a bijection between free $\mathcal V$-paths running from $(0,0)$ to $(j,j-i)$ and Delannoy paths running from $(0,0)$ to $(j,i)$. See Section~\ref{sec:delan} for more details.
\end{example}

\section{Examples of lattice paths with vertical steps}
\label{sec:examples}

In this section we present five examples of lattice paths with vertical steps for which we apply results obtained in the previous sections.

\subsection{The first example}
\label{sec:firstexample}

Let $\mathcal A=\{V, U_N, U_{N-1}, \ldots, U_0, D_1, \ldots, D_K \}$ for fixed $N\geq 0$ and $K\geq 1$. Let us define the corresponding set of lattice steps without vertical step $V$. That is, $\mathcal L = \mathcal A \setminus \{V\}$.
By Theorem~\ref{th:biject}, for $m\geq 0$, and $n\geq 1$, we have
\[
	|\mathcal P_{\mathcal A}(m,n)| = \sum_{\mu\in\mathcal P_{\mathcal L}(m,n)} w(\mu),
\]
if the weight function $w$ over steps from $\mathcal L$-paths is defined as follows
\begin{align}
	w_{}(D_p) = \left\{ 
\begin{array}{cl}
	N+2 & \mathrm{if}\,\, p = 1, \\ 
	1 & \mathrm{if}\,\, p \geq 2,
\end{array}		
	\right.,
	\quad
	w_{}(U_k^{m,d}) = \binom{N-k+d+1-\epsilon_m}{N-k},
\end{align}
where $\epsilon_m = 0$ if $m\in\{0,1\}$ and $\epsilon_m = 1$ if $m\geq 2$.

\begin{corollary} 
For $m\geq 0$ and $n\geq 1$, we have
\begin{subequations}
\begin{align}
	|\mathcal F_{\mathcal A}(m,n) |
	&=
	\sum_{k=0}^{\lfloor \frac{N n + m}{N+K+1} \rfloor}
	(-1)^k \binom{n}{k} \binom{n(N+2)-k(N+K+1)+m}{2n},
	\\
	|\mathcal P_{\mathcal A}(1,n)| 
	&= \frac{1}{n}
	\sum_{k=0}^{\lfloor \frac{N n + 1}{N+K+1} \rfloor}
	 (-1)^k \binom{n}{k} \binom{n(N+2)-k(N+K+1)}{2n-1}.
\end{align}
\end{subequations}
\end{corollary}
\begin{proof}
By Proposition~\ref{prop:numfree} and Proposition~\ref{prop:numprim}, we have
\begin{align*}
	|\mathcal F_{\mathcal A}(m,n) | 
	= [z^{N n + m}] 
	\frac{(1-z^{N+K+1})^{n}}{(1-z)^{2n+1}},
	\quad
	|\mathcal P_{\mathcal A}(1,n)| 
	= \frac{1}{n} [z^{Nn+1}] \frac{(1 - z^{N+K+1})^n}{(1-z)^{2n}}.
\end{align*} 
From the binomial theorem we derive
\begin{equation}
	\label{eq:expans1}
	\frac{(1-z^{A})^B}{(1-z)^{C}} = \sum_{n\geq 0} \sum_{k=0}^{\lfloor \frac{n}{A} \rfloor}  (-1)^k \binom{B}{k} \binom{C+n-kA-1}{C-1} z^n.
\end{equation}
for any integers $A,B,C\geq 0$. Substituting required parameters we obtain the formulas.
\end{proof}

\begin{corollary}
\label{cor:gener1}
Let $P_{m}(x) = P_{\mathcal A,m}(x)$ and $1\leq m\leq K$, then
\begin{equation}
\begin{split}
	P_{0}(x) 
	&= 1 + \delta_0 x P_{0}(x)
	+ x P_{0}(x) \sum_{k=1}^N
	\sum_{d=1}^{k} 
	\binom{N-k+d+1}{N-k}  
	\sum_{M}
	\prod_{j=1}^d 
	(P_{m_j}(x)-1),
	\\
	P_{m}(x) 
	&= 1 + \delta_m x + x \sum_{k=0}^N \sum_{d=1}^{k+1}  
	\binom{N-k+d+1-\epsilon_m}{N-k}  
	\sum_{M}
	\prod_{j=1}^d (P_{m_j}(x)-1),
\end{split}
\end{equation}
where $\delta_m = (N+1+m)$ if $m\in\{0,1\}$, $\delta_m = 1$ for $2\leq m\leq K$. Further $\epsilon_m = 0$ if $m\in\{0,1\}$, $\epsilon_m = 1$ if $m\geq 2$, and the summation range $M$ is over all solutions of $m_1+\cdots+m_d=k+m$ such that $1 \leq m_1,\ldots,m_{d-1}\leq K$ and $\max(m,1)\leq m_d\leq K$.
\end{corollary}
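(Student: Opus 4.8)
The plan is to obtain this corollary as a direct specialization of Proposition~\ref{prop:generV} to the set $\mathcal V = \mathcal A$. Since $P_{\mathcal A,m}(x)$ is by definition the generating function appearing in that proposition, its two displayed functional equations already have exactly the shape of the claimed equations. Thus the entire task reduces to evaluating the two sets of constants that enter those equations, namely the weights $|\mathcal H_{\mathcal A}(m,d,k)|$ attached to each up step and the boundary terms $\delta_m$, and checking that they coincide with the binomial coefficients $\binom{N-k+d+1-\epsilon_m}{N-k}$ and the values $N+1+m$ (resp.\ $1$) stated in the corollary.

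First I would compute $|\mathcal H_{\mathcal A}(m,d,k)|$ using Proposition~\ref{prop:h}. The point is that for $\mathcal A$ the up steps are exactly $U_0,U_1,\ldots,U_N$, so for $k\geq 0$ the set $\mathcal A_{\geq k}$ consists precisely of $U_k,U_{k+1},\ldots,U_N$. Proposition~\ref{prop:h} then gives
\[
	|\mathcal H_{\mathcal A}(m,d,k)| = \sum_{h=k}^{N}\binom{h-k+d-\epsilon_m}{h-k}.
\]
Reindexing by $i=h-k$ turns this into $\sum_{i=0}^{N-k}\binom{d-\epsilon_m+i}{i}$, and the hockey-stick identity collapses the sum to $\binom{N-k+d+1-\epsilon_m}{N-k}$, which is exactly the required weight (with $\epsilon_0=0$ recovering the $m=0$ coefficient $\binom{N-k+d+1}{N-k}$).

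Next I would evaluate $\delta_m$. For $m\in\{0,1\}$ the definition in Proposition~\ref{prop:generV} gives $\delta_m=|\mathcal A_{\geq -m}|$; since $\mathcal A_{\geq 0}=\{U_0,\ldots,U_N\}$ and $\mathcal A_{\geq -1}=\{U_0,\ldots,U_N,D_1\}$, we get $\delta_0=N+1$ and $\delta_1=N+2$, that is $\delta_m=N+1+m$. For $2\leq m\leq K$ the step $D_m$ lies in $\mathcal A$, so $\delta_m=1$. Substituting these values together with the computed weights into the functional equations of Proposition~\ref{prop:generV} yields the two equations of the corollary verbatim.

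I expect no serious obstacle here: the argument is purely a substitution backed by one application of the hockey-stick identity. The only point requiring a little care is the restriction $k\geq 0$ in the weight computation, which holds throughout because the outer sums in Proposition~\ref{prop:generV} range over $k$ from $0$ (or $1$) up to $N$, so $\mathcal A_{\geq k}$ never needs to include any down step and the closed form for $|\mathcal H_{\mathcal A}(m,d,k)|$ applies without modification.
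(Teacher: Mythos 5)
Your proposal is correct and is exactly the argument the paper intends: the corollary is the specialization of Proposition~\ref{prop:generV} to $\mathcal V=\mathcal A$, with the weights $|\mathcal H_{\mathcal A}(m,d,k)|=\sum_{i=0}^{N-k}\binom{d-\epsilon_m+i}{i}=\binom{N-k+d+1-\epsilon_m}{N-k}$ obtained from Proposition~\ref{prop:h} via the hockey-stick identity, and $\delta_0=N+1$, $\delta_1=N+2$, $\delta_m=1$ for $2\leq m\leq K$ read off from the definition of $\delta_m$ (the same weights the paper records at the start of Section~\ref{sec:firstexample}). No gaps; your care about $k\geq 0$ ensuring $\mathcal A_{\geq k}=\{U_k,\ldots,U_N\}$ is the right point to check.
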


\begin{corollary}
The number of all vertical steps in the set of paths from $\mathcal P_{\mathcal A}(1,n)$ is equal to $|\mathcal F_{\mathcal A}(0,n)|$.
The number of all steps in the set of paths from $\mathcal P_{\mathcal A}(1,n)$ is equal to $|\mathcal F_{\mathcal A}(1,n)|$.
\end{corollary}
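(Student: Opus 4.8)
The plan is to recognize that this corollary is nothing more than the specialization of Theorem~\ref{th:prop1} and Theorem~\ref{th:prop2} to the concrete family of steps $\mathcal A$. First I would verify that $\mathcal A$ satisfies the standing hypotheses of Section~\ref{sec:properties}: indeed $\mathcal A = \{V, U_N, U_{N-1},\ldots,U_0,D_1,\ldots,D_K\}$ is a subset of $\Omega_N\cup\{V\}$ that contains both the maximal up step $U_N$ and the vertical step $V$. Hence every general result proved there for an arbitrary such $\mathcal S$ applies verbatim with $\mathcal S = \mathcal A$ (and correspondingly $\mathcal V = \mathcal A$, since $\mathcal A$ already contains $V$).

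For the first assertion I would invoke the first identity of Theorem~\ref{th:prop1}, namely
\[
	\#Steps(V\in\mathcal P_{\mathcal S}(1,n)) = |\mathcal F_{\mathcal S}(0,n)|,
\]
read with $\mathcal S = \mathcal A$. Since $\#Steps(V\in\mathcal P_{\mathcal A}(1,n))$ is by definition precisely the total number of vertical steps taken over all paths in $\mathcal P_{\mathcal A}(1,n)$, this gives the claimed equality with $|\mathcal F_{\mathcal A}(0,n)|$. For the second assertion I would similarly apply Theorem~\ref{th:prop2}, which asserts $\#Steps(\mathcal P_{\mathcal S}(1,n)) = |\mathcal F_{\mathcal S}(1,n)|$; specializing to $\mathcal S = \mathcal A$ identifies the total number of all steps across the paths in $\mathcal P_{\mathcal A}(1,n)$ with $|\mathcal F_{\mathcal A}(1,n)|$.

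Since both statements are immediate instances of theorems already established in full generality, there is no genuine obstacle to overcome. The only point that merits a word of justification is that the bijections $\phi$ and $\psi$ underpinning those theorems were proved under hypotheses that $\mathcal A$ indeed meets, so nothing in the construction degenerates for this particular finite step set. I would therefore keep the proof to a single sentence per assertion, citing Theorem~\ref{th:prop1} and Theorem~\ref{th:prop2} respectively.
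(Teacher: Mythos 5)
Your proposal is correct and matches the paper exactly: the paper states this corollary without proof precisely because it is the immediate specialization of Theorem~\ref{th:prop1} and Theorem~\ref{th:prop2} to $\mathcal S = \mathcal A$, which satisfies the standing hypotheses of Section~\ref{sec:properties} (it is a subset of $\Omega_N\cup\{V\}$ containing $U_N$ and $V$, so $\mathcal V = \mathcal A$). Your one-sentence-per-assertion citation of those two theorems is exactly the intended argument.
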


\begin{proposition}
If $N=1$ and $K=2$, then 
\[
P_1(x) = \frac{2(1-x)}{3x} - 
\frac{2\sqrt{\Delta}}{3x} \sin \left\{ 
	\frac{\pi }{6}+\frac{1}{3} \arccos \left(
	\frac{ 20x^3-6x^2+15x-2}{2 \Delta^{3/2}}
	\right)
\right\},
\]
where $\Delta = 1-5x-2x^2$.
\end{proposition}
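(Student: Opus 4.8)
The plan is to collapse the functional equations of Corollary~\ref{cor:gener1} into a single cubic for $P_1 = P_{\mathcal A,1}(x)$ and then solve that cubic in closed form. Specializing to $N=1$, $K=2$, the only down steps are $D_1,D_2$, so the system involves only $P_0,P_1,P_2$, with
\[
	P_0 = 1 + xP_0(1+P_1),
	\qquad
	P_1 = 1 + 3x + x\bigl[\,3(P_1-1) + P_2 + (P_1-1)^2\,\bigr].
\]
To close the system I would invoke the relation $P_2(x) = xP_0(x)$, which is exactly the proposition giving $|\mathcal P_{\mathcal V}(K,n)| = |\mathcal P_{\mathcal V}(0,n-1)|$ for $K=2$. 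The first equation is linear in $P_0$ and yields $P_0 = (1-x-xP_1)^{-1}$; substituting this together with $P_2 = xP_0$ into the $P_1$-equation and clearing the denominator $1-x-xP_1$ eliminates the auxiliary series and leaves
\[
	x^2 P_1^3 - 2x(1-x)P_1^2 + (1-x+2x^2)P_1 - 1 = 0 .
\]
I would double check the first coefficients $P_1 = 1 + 3x + 10x^2 + \cdots$ against Proposition~\ref{prop:numprim} to make sure the constant (height-zero) terms of the $P_{m_j}$ have been accounted for correctly, since this is exactly the delicate point in reading off the cubic.

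With the cubic in hand, the remaining work is to solve it. Dividing by $x^2$ and removing the quadratic term through the substitution $P_1 = \tfrac{2(1-x)}{3x} + Q$ — note that $\tfrac{2(1-x)}{3x}$ is precisely the first term of the claimed formula — produces a depressed cubic $Q^3 + pQ + q = 0$ with
\[
	p = -\frac{\Delta}{3x^2},
	\qquad
	\Delta = 1-5x-2x^2,
	\qquad
	q = \frac{2-15x+6x^2-20x^3}{27x^3}.
\]
For $x>0$ small we have $\Delta>0$, hence $p<0$ and the discriminant of the depressed cubic is negative, so we are in the casus irreducibilis with three distinct real roots. Vi\`ete's trigonometric formula then gives the roots as $2\sqrt{-p/3}\,\cos\bigl(\tfrac13\arccos(R) - \tfrac{2\pi k}{3}\bigr)$ for $k=0,1,2$, where $2\sqrt{-p/3} = \tfrac{2\sqrt\Delta}{3x}$ and $R = \tfrac{3q}{2p}\sqrt{-3/p}$. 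A direct simplification should reduce $R$ to $\tfrac{20x^3-6x^2+15x-2}{2\Delta^{3/2}}$, matching the argument inside the arccosine in the statement.

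Finally I would select the correct branch. The generating function is the unique root analytic at the origin with $P_1(0)=1$, and among the three real roots exactly one has this expansion; identifying it fixes $k$. The stated form comes from the identity $\cos(\phi+\tfrac{2\pi}{3}) = -\sin(\tfrac{\pi}{6}+\phi)$, which rewrites the $k=2$ branch as $-\tfrac{2\sqrt\Delta}{3x}\sin\{\tfrac{\pi}{6}+\tfrac13\arccos(R)\}$ and recovers the claimed closed form. I expect the main obstacles to be, first, the careful reading of the functional equation (the handling of the constant terms) needed to obtain the precise cubic, and second, the branch selection: one must argue that $\Delta>0$ on the disk of convergence so that the trigonometric form is valid there, and verify that the $k=2$ branch — rather than $k=0$ or $k=1$ — is the one that is analytic at $0$ and begins $1 + 3x + \cdots$. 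The intervening simplification of $q$ and $R$ is routine but lengthy.
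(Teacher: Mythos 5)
Your proposal is correct and follows essentially the same route as the paper: specialize Corollary~\ref{cor:gener1} to $N=1$, $K=2$, use $P_0=(1-x-xP_1)^{-1}$ and the relation for $P_2$ to eliminate the auxiliary series, reduce to a cubic in $P_1$, and solve it by Vi\`ete's trigonometric method with the correct branch (your bookkeeping of $P_2$ as the generating function of nonempty paths, so $P_2=xP_0$, is equivalent to the paper's $P_2=1+xP_0$ entering through the factor $P_2-1$). One point worth flagging: your cubic $x^2P_1^3-2x(1-x)P_1^2+(1-x+2x^2)P_1-1=0$ is the correct one, whereas the paper prints the linear coefficient as $(1-x-2x^2)$; that is a typo in the paper, since only the $(1-x+2x^2)$ version is consistent with the series $P_1=1+3x+10x^2+41x^3+\cdots$ and leads, via $p=-\Delta/(3x^2)$ and $q=(2-15x+6x^2-20x^3)/(27x^3)$, to the stated closed form.
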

\begin{proof}
By Corollary~\ref{cor:gener1}, for $N=1$ and $K=2$, we obtain three functional equations
\begin{align*}
	P_0(x) &= \frac{1}{1-x-x P_1(x)},
	\\
	P_1(x) &= 1 + x P_1(x) + x P_1(x)^2 + x P_2(x),
	\\
	P_2(x) &= \frac{1 - x P_1(x)}{1-x-x P_1(x)}
	= 1 + x P_0(x),
\end{align*}
which follow to the cubic equation
\[
	x^2 P_1(x)^3 + 2x(x-1) P_1(x)^2 + (1-x-2x^2) P_1(x) - 1 = 0.
\]
Using trigonometric methods we obtain the formula.
\end{proof}

\noindent \textbf{Remark}.
Let us give some first values of the sequences considered above for $N=1$ and $K=2$. That is, $\mathcal A = \{ V, U_1, U_0, D_1, D_2 \}$.
\begin{align*}
	(|\mathcal F_{\mathcal A}(0,n)|)_{n\geq 0} 
	&= (1, 3, 15, 84, 491, 2948, 18018, 111520, 696739, \ldots )
	\\
	(|\mathcal F_{\mathcal A}(1,n)|)_{n\geq 0} 
	&= (1, 6, 35, 207, 1251, 7678, 47658, 298371, 1880659, \ldots)
	\\
	(|\mathcal P_{\mathcal A}(0,n)|)_{n\geq 0} 
	&= (1, 2, 7, 30, 142, 716, 3771, 20502, 114194, 648276, \ldots)
	\\
	(|\mathcal P_{\mathcal A}(1,n)|)_{n\geq 0} 
	&= (1, 3, 10, 41, 190, 946, 4940, 26693, 147990, 837102, \ldots)
	\\
	(|\mathcal P_{\mathcal A}(2,n)|)_{n\geq 0}
	&= (1, 1, 2, 7, 30, 142, 716, 3771, 20502, 114194, 648276, \ldots)
\end{align*}

\subsection{{\L}ukasiewicz paths}

This section is devoted to the case where $K=1$ from the previous example. Namely, let $\mathcal B=\{V, U_N, U_{N-1},\ldots, U_0, D_1 \}$ for fixed $N\geq 0$. Let $\mathcal L = \mathcal B\setminus\{V\}$. Lattice paths consisting of steps from $\mathcal L$ are called $N$-{\L}ukasiewicz paths \cite{Roblet, Viennot1}. The weighted {\L}ukasiewicz paths encode several families of combinatorial structures like involutions, permutations, and set partitions, see Varvak \cite{Varvak}. Now we see that the proper weighted $N$-{\L}ukasiewicz paths encode $m$-primary $\mathcal B$-paths, where $m\in\{0,1\}$.

Namely, by Theorem~\ref{th:biject}, for $m\geq 0$ and $n\geq 1$, we have
\[
	|\mathcal P_{\mathcal B}(m,n)| = \sum_{\mu\in\mathcal P_{\mathcal L}(m,n)} w(\mu),
\]
if the weight function $w$ over steps from $\mathcal L$-paths is defined as follows
\begin{equation}
	\label{eq:wLukas}
	w_{}(D_1) = N+2, 
	\quad
	w_{}(U_{k}^{1,d}) 
	= \binom{N+2}{k+2},
	\quad
	w_{}(U_{k}^{0,d}) 
	= \binom{N+1}{k+1}.
\end{equation}
It is worth pointing out, that the weight function $w$ is independent of $d$.

\begin{example}
For $N=1$, the set $\mathcal W_{\mathcal L}(0,n)$ is the family of weighted Motzkin paths running from $(0,0)$ to $(n,0)$, which never go below the $x$-axis, and where the weight of the horizontal step $(1,0)$ is $2$ if it lies on the $x$-axis and $3$ if it lies above the $x$-axis, the weight $(1,1)$ is one and the weight of $(1,-1)$ is $3$.
\end{example}

\begin{corollary} 
Let $m\geq 0$ and $n\geq 1$, then
\begin{subequations} 
\begin{align}
	|\mathcal F_{\mathcal B}(m,n) |
	&=
	\sum_{k=0}^{\lfloor \frac{N n + m}{N+2} \rfloor}
	(-1)^k \binom{n}{k} \binom{(N+2)(n-k)+m}{2n},
	\\
	|\mathcal P_{\mathcal B}(0,n)| 
	&= (-1)^n + \sum_{j=1}^n \sum_{k=0}^{\lfloor \frac{N j + 1}{N+2} \rfloor}
	\frac{(-1)^{k+n-j}}{j} \binom{j}{k} \binom{(N+2)(j-k)}{2j-1},
	\\
	|\mathcal P_{\mathcal B}(1,n)| 
	&= \frac{1}{n}
	\sum_{k=0}^{\lfloor \frac{N n + 1}{N+2} \rfloor}
	 (-1)^k \binom{n}{k} \binom{(N+2)(n-k)}{2n-1}.
\end{align}
\end{subequations} 
\end{corollary}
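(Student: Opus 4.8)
The plan is to observe that $\mathcal B$ is exactly the set $\mathcal A$ of Section~\ref{sec:firstexample} specialized to $K=1$, so the first and third formulas should drop out of the already-proven enumeration for $\mathcal A$, while the middle one requires the extra recurrence of Proposition~\ref{prop:P0}. First I would record the single generating-function input particular to this case. Writing $\mathcal N=\mathcal B\setminus\{V\}=\{U_N,\dots,U_0,D_1\}$, the exponents $N-k$ for $S_k\in\mathcal N$ run over $0,1,\dots,N+1$, so
$$\sum_{S_k\in\mathcal N} y^{N-k} = 1+y+\cdots+y^{N+1} = \frac{1-y^{N+2}}{1-y}.$$
Feeding this into Proposition~\ref{prop:numfree} and Proposition~\ref{prop:numprim} collapses the relevant generating functions to
$$|\mathcal F_{\mathcal B}(m,n)| = [y^{Nn+m}]\,\frac{(1-y^{N+2})^n}{(1-y)^{2n+1}}, \qquad |\mathcal P_{\mathcal B}(1,n)| = \frac{1}{n}[y^{Nn+1}]\,\frac{(1-y^{N+2})^n}{(1-y)^{2n}},$$
which are precisely the $K=1$ instances of the expressions appearing in the proof of the corresponding corollary for $\mathcal A$.

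Second, I would extract the coefficients using the binomial expansion \eqref{eq:expans1} with $A=N+2$, $B=n$, and $C=2n+1$ (resp.\ $C=2n$). The top argument of the inner binomial simplifies because $2n+1+(Nn+m)-k(N+2)-1 = (N+2)(n-k)+m$ (and analogously $(N+2)(n-k)$ in the primary case), while $\lfloor (Nn+m)/(N+2)\rfloor$ is exactly the upper summation limit produced by the $\lfloor n/A\rfloor$ cutoff in \eqref{eq:expans1}. This yields the first and third formulas at once; equivalently one may simply substitute $K=1$ (so that $N+K+1=N+2$) into the two formulas of the corollary at the start of Section~\ref{sec:firstexample}.

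Finally, for $|\mathcal P_{\mathcal B}(0,n)|$ I would invoke Proposition~\ref{prop:P0}, which applies since $\mathcal B$ has maximal down step $D_1$, i.e.\ $K=1$. This gives $|\mathcal P_{\mathcal B}(0,n)| = \sum_{j=0}^n (-1)^{n-j}|\mathcal P_{\mathcal B}(1,j)|$. The one point that needs care is the boundary term $j=0$: the closed form for $|\mathcal P_{\mathcal B}(1,n)|$ is valid only for $n\geq 1$, so the $j=0$ summand must be evaluated separately as $(-1)^n|\mathcal P_{\mathcal B}(1,0)| = (-1)^n$, the single vertical step $V$ being the unique $1$-primary path from $(0,0)$ to $(0,-1)$. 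Substituting the already-derived expression for $|\mathcal P_{\mathcal B}(1,j)|$ into the remaining terms $j=1,\dots,n$ and merging the two sign factors into $(-1)^{k+n-j}$ then produces the stated double sum. The only genuine obstacle is this bookkeeping at the boundary; everything else is the routine coefficient algebra already carried out for $\mathcal A$.
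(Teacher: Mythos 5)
Your proposal is correct and follows exactly the route the paper intends: the first and third formulas are the $K=1$ specialization of the $\mathcal A$-corollary (equivalently, Propositions~\ref{prop:numfree} and \ref{prop:numprim} with $\sum_{S_k\in\mathcal N}y^{N-k}=(1-y^{N+2})/(1-y)$ expanded via \eqref{eq:expans1}), and the second follows from Proposition~\ref{prop:P0}. Your explicit treatment of the $j=0$ boundary term, $(-1)^n|\mathcal P_{\mathcal B}(1,0)|=(-1)^n$ via the unique single-step path $V$, is exactly the bookkeeping that produces the isolated $(-1)^n$ in the stated formula.
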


\begin{corollary}
The number of all vertical steps in the set of paths from $\mathcal P_{\mathcal B}(1,n)$ is equal to $|\mathcal F_{\mathcal B}(0,n)|$.
The number of all steps in the set of paths from $\mathcal P_{\mathcal B}(1,n)$ is equal to $|\mathcal F_{\mathcal B}(1,n)|$.
\end{corollary}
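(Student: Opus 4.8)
The plan is to recognize this corollary as an immediate specialization of the general statistical identities of Section~\ref{sec:properties} to the particular step set $\mathcal{B}$, so that essentially no new work is required beyond checking that the hypotheses apply. First I would verify that $\mathcal{B}$ meets the standing assumptions of that section: by definition $\mathcal{B} = \{V, U_N, U_{N-1}, \ldots, U_0, D_1\}$ is a subset of $\Omega_N\cup\{V\}$ that contains both the maximal up step $U_N$ and the vertical step $V$. Consequently Theorem~\ref{th:prop1} and Theorem~\ref{th:prop2} apply verbatim with $\mathcal{S} = \mathcal{B}$.

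Next I would simply read off the two assertions. Setting $\mathcal{S} = \mathcal{B}$ in the first identity of Theorem~\ref{th:prop1} gives
\[
	\#Steps(V \in \mathcal{P}_{\mathcal{B}}(1,n)) = |\mathcal{F}_{\mathcal{B}}(0,n)|,
\]
which is exactly the claim that the total number of vertical steps among all paths in $\mathcal{P}_{\mathcal{B}}(1,n)$ equals $|\mathcal{F}_{\mathcal{B}}(0,n)|$. Likewise, Theorem~\ref{th:prop2} with $\mathcal{S} = \mathcal{B}$ reads
\[
	\#Steps(\mathcal{P}_{\mathcal{B}}(1,n)) = |\mathcal{F}_{\mathcal{B}}(1,n)|,
\]
which is the second assertion about the total number of all steps.

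Since both equalities are direct instances of previously established bijective results --- the folding map $\phi$ of Theorem~\ref{th:prop1} for the vertical-step count, and the cyclic-shift map $\psi$ together with the Raney lemma (Lemma~\ref{lem:raney}) of Theorem~\ref{th:prop2} for the total-step count --- there is no computation to carry out. The only point that needs attention is the applicability of the hypotheses to $\mathcal{B}$, which is immediate, so I do not anticipate any genuine obstacle; the main content of the corollary lies entirely in the earlier theorems. The identical corollary stated for $\mathcal{A}$ in the preceding subsection follows in exactly the same way.
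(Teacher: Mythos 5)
Your proposal is correct and is exactly what the paper does: the corollary is stated there without proof precisely because it is the verbatim specialization of Theorem~\ref{th:prop1} and Theorem~\ref{th:prop2} to $\mathcal S = \mathcal B$, whose hypotheses ($\mathcal B \subseteq \Omega_N \cup \{V\}$ with $U_N \in \mathcal B$) you correctly verified. No further comment is needed.
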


\begin{corollary}
Let $P_{m}(x) = P_{\mathcal B,m}(x)$, then
\begin{align}
	P_{0}(x) 
	= 1 + x P_{0}(x) 
	\sum_{k=0}^N P_{1}(x)^k,
	\qquad
	P_{1}(x) 
	= 1 + x \sum_{k=0}^{N+1}
	P_{1}(x)^{k}.
\end{align}
\end{corollary}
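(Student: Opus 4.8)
The plan is to specialize the general functional equations of Corollary~\ref{cor:gener1} to the case $K=1$, which is exactly the setting $\mathcal A=\mathcal B$, and then to collapse the multiple sums by exploiting the constraint $K=1$ before finishing with a single binomial identity. Throughout I write $P_m$ for $P_{\mathcal B,m}(x)$ and set $q=P_1-1$.

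First I would examine the summation range $M$ when $K=1$. It requires $m_1+\cdots+m_d=k+m$ with $1\le m_1,\ldots,m_{d-1}\le 1$ and $\max(m,1)\le m_d\le 1$; hence every part $m_j$ must equal $1$, so that $d=k+m$ and $M$ consists of a single composition. Consequently, in the equation for $P_0$ the inner sum $\sum_{d=1}^{k}\sum_M$ reduces to the single term $d=k$, and in the equation for $P_1$ the sum $\sum_{d=1}^{k+1}\sum_M$ reduces to $d=k+1$. Substituting $d=k$ (resp.\ $d=k+1$) into the binomial weight $\binom{N-k+d+1-\epsilon_m}{N-k}$ of Corollary~\ref{cor:gener1}, using $\epsilon_0=\epsilon_1=0$ together with $\delta_0=N+1$ and $\delta_1=N+2$, and noting that each product $\prod_{j=1}^d(P_{m_j}-1)$ becomes a pure power of $q=P_1-1$, I would obtain
\begin{align*}
	P_0 &= 1 + (N+1)\,x P_0 + x P_0 \sum_{k=1}^{N} \binom{N+1}{k+1}\,q^{\,k},
	\\
	P_1 &= 1 + (N+2)\,x + x \sum_{k=0}^{N} \binom{N+2}{k+2}\,q^{\,k+1}.
\end{align*}

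The remaining work is a binomial identity, which I expect to be the only real computation. In the $P_0$ equation I would divide the last two terms by $xP_0$ and reindex $j=k+1$; writing the resulting sum as a truncated expansion of $(1+q)^{N+1}$ and adding back the missing $j=0,1$ terms yields $(N+1)+\sum_{k=1}^N\binom{N+1}{k+1}q^{\,k}=\big((1+q)^{N+1}-1\big)/q=\sum_{k=0}^{N}P_1^{\,k}$, which is exactly the asserted right-hand side. The same manoeuvre with $j=k+2$ turns the $P_1$ sum into $\big((1+q)^{N+2}-1\big)/q=\sum_{k=0}^{N+1}P_1^{\,k}$, giving $P_1=1+x\sum_{k=0}^{N+1}P_1^{\,k}$. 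The main obstacle is purely bookkeeping: keeping the index shifts, the back-substitution $P_1=1+q$, and the geometric collapse $\sum_{k}(1+q)^k=\big((1+q)^{r}-1\big)/q$ aligned so that the stray $\delta_m$ constants cancel against the missing low-order terms of the binomial expansion.
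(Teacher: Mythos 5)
Your proposal is correct and follows essentially the same route as the paper: the paper likewise plugs the specialized weights $\binom{N+1}{k+1}$, $\binom{N+2}{k+2}$ (which are exactly your $d=k$ and $d=k+1$ substitutions forced by $K=1$) into the general functional equations and then collapses the sum via the binomial theorem to $\big(P_1^{N+1}-1\big)/(P_1-1)=\sum_{k=0}^{N}P_1^k$. The only cosmetic difference is that you start from Corollary~\ref{cor:gener1} with $K=1$ while the paper invokes Proposition~\ref{prop:generV} with the weights \eqref{eq:wLukas}; these are the same equations, so no substantive distinction.
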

\begin{proof}
Applying Proposition~\ref{prop:generV} for the weights given by \eqref{eq:wLukas} we obtain
\begin{align*}
	P_{0}(x) 
	&= 1 + x P_{0}(x) 
	\sum_{k=0}^N \binom{N+1}{k+1} (P_{1}(x) - 1)^k
	\\
	&= 1 + \frac{x P_{0}(x)}{(P_{1}(x) - 1)} 
	\sum_{k=1}^{N+1} \binom{N+1}{k} (P_{1}(x) - 1)^k.
\end{align*}
Using the binomial theorem and simplifying the result we obtain the functional equation for $P_0(x)$. In the same way one can show an analogous relation for $P_1(x)$.
\end{proof}

For instance, if $N=1$, then
\begin{align*}
	P_0(x) = \frac{1-x-\sqrt{1-6x-3x^2}}{2 x(1+x)},
	\quad
	P_1(x) = \frac{1-x-\sqrt{1-6x-3x^2}}{2x}.
\end{align*}
The generating functions of the sequences $(|\mathcal F_{\mathcal B}(m,n)|)_{n\geq 0}$, for $m\in\{0,1\}$, are derived by the author in \cite{mdRect} (see Eq. 18 for $m=0$ and Eq. 30a for $m=1$). That is,
\begin{align*}
	\sum_{n\geq 0} |\mathcal F_{\mathcal B}(0,n)| x^n 
	= \frac{1}{\sqrt{1-6x-3x^2}},
	\quad
	\sum_{n\geq 0} |\mathcal F_{\mathcal B}(1,n)| x^n 
	= \frac{1-x-\sqrt{1-6x-3x^2}}{2 x \sqrt{1-6x-3x^2}}.
\end{align*}

\noindent
\textbf{Remark}.
Let $N=K=1$.
\begin{align*}
	(|\mathcal F_{\mathcal B}(0,n)|)_{n\geq 0} 
	&= (1, 3, 15, 81, 459, 2673, 15849, 95175, 576963, \ldots) 
	&(A122868)
	\\
	(|\mathcal F_{\mathcal B}(1,n)|)_{n\geq 0} 
	&= (1, 6, 33, 189, 1107, 6588, 39663, 240894, \ldots) 
	&
	\\
	(|\mathcal P_{\mathcal B}(0	,n)|)_{n\geq 0} 
	&= (1, 2, 7, 29, 133, 650, 3319, 17498, 94525, \ldots)
	&(A064641)
	\\
	(|\mathcal P_{\mathcal B}(1,n)|)_{n\geq 0} 
	&= (1, 3, 9, 36, 162, 783, 3969, 20817, 112023, \ldots)
	&(A156016)
\end{align*}
The numbers starting with $A$ in parentheses denote corresponding sequences in OEIS \cite{oeis}.

\subsection{Infinite number of down steps}
In this section we consider the case where the set of steps contains infinitely many down steps. Namely, let $\mathcal C = \{ V, U_N, U_{N-1}, \ldots, U_0, D_1, D_2, \ldots \}$ for fixed $N\geq 0$.
Let $\mathcal L = \mathcal C \setminus \{V\}$. By Theorem~\ref{th:biject}, we have
\[
	|\mathcal P_{\mathcal C}(m,n)| = \sum_{\mu\in\mathcal P_{\mathcal L}(m,n)} w(\mu),
\]
if the weight function $w_{}$ over steps from $\mathcal L$-paths is defined as follows
\begin{align}
	w_{}(D_p) = \left\{ 
\begin{array}{cl}
	N+2 & \mathrm{if}\,\, p = 1, \\ 
	1 & \mathrm{if}\,\, p \geq 2,
\end{array}		
	\right.
	\qquad
	w_{}(U_k^{m,d}) = \binom{N-k+d+1-\epsilon_m}{N-k},
\end{align}
where $\epsilon_m = 0$ if $m\in\{0,1\}$ and $\epsilon_m = 1$ if $m\geq 2$.

\begin{corollary}
Let $m\geq 0$ and $n\geq 1$, then
\begin{subequations} 
\begin{align}
	|\mathcal F_{\mathcal C}(m,n) | 
	&= \binom{(N+2)n+m}{2n},
	\\
	|\mathcal P_{\mathcal C}(1,n)| 
	&= \frac{1}{n} \binom{(N+2)n}{2n-1}.
\end{align}
\end{subequations}
\end{corollary}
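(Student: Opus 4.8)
The plan is to specialize the general coefficient formulas of Proposition~\ref{prop:numfree} and Proposition~\ref{prop:numprim} to the set $\mathcal C$. The crucial simplification comes from the infinitude of down steps. Writing $\mathcal N = \mathcal C \setminus \{V\} = \{S_k : k\leq N\}$, I observe that as $k$ ranges over all integers at most $N$, the exponent $N-k$ runs through every nonnegative integer exactly once, so that
\[
	\sum_{S_k \in \mathcal N} y^{N-k} = \sum_{j\geq 0} y^j = \frac{1}{1-y}.
\]
This collapse of the step polynomial to a single geometric factor is the main observation driving both formulas, and it is exactly the feature that distinguishes this infinite case from the finite ones treated in the earlier examples.

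Substituting this into \eqref{eq:fznm2} I would obtain $|\mathcal F_{\mathcal C}(m,n)| = [y^{Nn+m}] (1-y)^{-(2n+1)}$, and likewise from \eqref{eq:formprim} I would get $|\mathcal P_{\mathcal C}(1,n)| = \tfrac{1}{n}[y^{Nn+1}] (1-y)^{-2n}$. From this point the argument is purely a matter of coefficient extraction: using the standard negative-binomial expansion $(1-y)^{-r} = \sum_{j\geq 0} \binom{j+r-1}{r-1} y^j$, reading off the coefficient of $y^{Nn+m}$ in the first case with $r=2n+1$ gives $\binom{Nn+m+2n}{2n} = \binom{(N+2)n+m}{2n}$, and reading off the coefficient of $y^{Nn+1}$ in the second case with $r=2n$ gives $\binom{Nn+1+2n-1}{2n-1} = \binom{(N+2)n}{2n-1}$, yielding the two claimed formulas.

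I do not expect any serious obstacle here, since both formulas of the general theory are already in hand. The only point requiring genuine care is the index bookkeeping in the geometric sum, namely the verification that every nonnegative power of $y$ appears in $\sum_{S_k\in\mathcal N} y^{N-k}$ exactly once, with none repeated or omitted; this depends on $\mathcal C$ containing the full family $D_1, D_2, \ldots$ together with $U_0,\ldots,U_N$, so that the attained exponents $N-k$ are precisely $0,1,2,\ldots$. Once the step polynomial is recognized as $1/(1-y)$, the remaining passage to closed form is a one-line application of the binomial series.
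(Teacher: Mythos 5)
Your proposal is correct and follows essentially the same route as the paper: the paper's proof likewise specializes Proposition~\ref{prop:numfree} and Proposition~\ref{prop:numprim} to $\mathcal C$, noting that the step polynomial collapses to $(1-z)^{-1}$ so that the two quantities become $[z^{Nn+m}](1-z)^{-2n-1}$ and $\tfrac{1}{n}[z^{Nn+1}](1-z)^{-2n}$, and then extracts coefficients. Your write-up merely makes explicit the two points the paper leaves implicit, namely the geometric-series identity $\sum_{S_k\in\mathcal N} y^{N-k} = 1/(1-y)$ and the negative-binomial expansion.
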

\begin{proof}
It follows from Proposition~\ref{prop:numfree} and Proposition~\ref{prop:numprim}. That is, 
\[
	|\mathcal F_{\mathcal C}(m,n) | 
	= [z^{N n + m}] \left( 1 - z \right)^{-2n-1},
	\qquad
	|\mathcal P_{\mathcal C}(1,n)| 
	= \frac{1}{n} [z^{N n+1}] \left( 1- z\right)^{-2n}.
\]
\end{proof}

\begin{corollary}
The expected number of vertical steps in a path from $\mathcal P_{\mathcal C}(1,n)$ is equal to $(Nn+1)/2$. The expected number of steps in a path from $\mathcal P_{\mathcal C}(1,n)$ is equal to $((N+2)n+1)/2$.
\end{corollary}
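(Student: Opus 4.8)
The plan is to apply the two corollaries established just above for the general family $\mathcal{V}$, specialised to $\mathcal{V}=\mathcal{C}$ (which is admissible since $\mathcal{C}$ contains both $V$ and $U_N$), and to insert the closed form $|\mathcal{F}_{\mathcal{C}}(m,n)| = \binom{(N+2)n+m}{2n}$ from the preceding corollary. The expected number of vertical steps in a path from $\mathcal{P}_{\mathcal{C}}(1,n)$ is then
\[
	n \cdot \frac{|\mathcal{F}_{\mathcal{C}}(0,n)|}{|\mathcal{F}_{\mathcal{C}}(1,n)| - |\mathcal{F}_{\mathcal{C}}(0,n)|}
	= n \cdot \frac{\binom{(N+2)n}{2n}}{\binom{(N+2)n+1}{2n} - \binom{(N+2)n}{2n}},
\]
so the entire statement reduces to evaluating this quotient of binomial coefficients.

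First I would simplify the denominator. By Pascal's rule, $\binom{(N+2)n+1}{2n} = \binom{(N+2)n}{2n} + \binom{(N+2)n}{2n-1}$, whence the difference collapses to $\binom{(N+2)n}{2n-1}$. The remaining task is to compute the ratio $\binom{(N+2)n}{2n}\big/\binom{(N+2)n}{2n-1}$; writing $M=(N+2)n$, the elementary identity $\binom{M}{r}/\binom{M}{r-1} = (M-r+1)/r$ with $r=2n$ gives $(M-2n+1)/(2n) = (Nn+1)/(2n)$. Multiplying by the leading factor $n$ yields $(Nn+1)/2$, which establishes the first assertion.

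For the second assertion I would use the companion corollary, according to which the expected number of steps equals $n(1+q)$, where $q$ is precisely the quotient computed in the previous paragraph. Hence the expected number of steps is $n + (Nn+1)/2 = ((N+2)n+1)/2$, as claimed. The only point requiring care is the binomial-ratio computation of the middle paragraph (in particular the correct use of Pascal's rule to collapse the difference); everything else is a direct substitution, so no genuine obstacle arises.
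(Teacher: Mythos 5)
Your proposal is correct and follows exactly the route the paper intends: the paper states this corollary without proof, immediately after the general expectation formulas and the closed form $|\mathcal F_{\mathcal C}(m,n)|=\binom{(N+2)n+m}{2n}$, so the intended argument is precisely your substitution followed by the Pascal-rule collapse and the ratio identity $\binom{M}{r}/\binom{M}{r-1}=(M-r+1)/r$. Your computations check out (e.g.\ for $N=1$, $n=3$: $3\cdot 84/(210-84)=2=(Nn+1)/2$), so nothing is missing.
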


\noindent \textbf{Remark}. 
The array $(|\mathcal P_{\mathcal C}(1,n)|)_{N,n}$ for $0\leq N\leq 3$ and $0\leq n\leq 7$, is
\[
\left(
\begin{array}{cccccccc}
 1 & 2 & 2 & 2 & 2 & 2 & 2 & 2 \\
 1 & 3 & 10 & 42 & 198 & 1001 & 5304 & 29070 \\
 1 & 4 & 28 & 264 & 2860 & 33592 & 416024 & 5348880 \\
 1 & 5 & 60 & 1001 & 19380 & 408595 & 9104550 & 210905400
\end{array}
\right).
\]
The second row of the array is denoted by A007226 in OEIS \cite{oeis}.
The array $(|\mathcal F_{\mathcal C}(0,n)|)_{N,n}$ for $0\leq N\leq 3$ and $0\leq n\leq 7$, is
\[
\left(
\begin{array}{cccccccc}
 1 & 1 & 1 & 1 & 1 & 1 & 1 & 1 \\
 1 & 3 & 15 & 84 & 495 & 3003 & 18564 & 116280 \\
 1 & 6 & 70 & 924 & 12870 & 184756 & 2704156 & 40116600 \\
 1 & 10 & 210 & 5005 & 125970 & 3268760 & 86493225 & 2319959400
\end{array}
\right).
\]
The second row of the array is denoted by A005809 in OEIS \cite{oeis}.
The third one is A001448 in OEIS \cite{oeis}, etc.

\subsection{Dyck paths with vertical steps}

Originally, a \emph{Dyck path} \cite{Deutsch2} is a lattice path running from $(0,0)$ to $(2n,0)$ and consisting of steps $U_1$ and $D_1$, for $n\geq 0$.
In this section we consider generalized Dyck paths which contain additional vertical steps. Namely, let $\mathcal D=\{V, U_N, D_K \}$, for fixed $N,K\geq 1$. Let $\mathcal L=\{U_N, U_{N-1}, \ldots, U_0, D_1, D_K \}$. By Theorem~\ref{th:biject}, we have
\[
	|\mathcal P_{\mathcal D}(m,n)| = \sum_{\mu\in\mathcal P_{\mathcal L}(m,n)} w(\mu),
\]
if the weight function $w_{}$ over steps from $\mathcal L$-paths is defined as follows
\[
	w_{}(D_p) = \left\{ 
\begin{array}{cl}
	2 & \mathrm{if}\,\, p = 1, \\ 
	1 & \mathrm{if}\,\, p \geq 2.
\end{array}		
	\right.,
	\quad
	w_{}(U_{k}^{m,d}) = \binom{N-k+d-\epsilon_m}{N-k},
\]
where $\epsilon_m = 0$ if $m\in\{0,1\}$ and $\epsilon_m = 1$ if $m\geq 2$.

\begin{corollary}
Let $m\geq 0$ and $n\geq 1$, then
\label{cor:dyckcor1}
\begin{subequations} 
\begin{align}
	|\mathcal F_{\mathcal D}(m,n) |
	&= \sum_{k=0}^{\lfloor \frac{Nn + m}{N+K} \rfloor} 
	\binom{n}{k} \binom{n(N+1)-k(N+K)+m}{n},
	\\
	|\mathcal P_{\mathcal D}(1,n)|
	&= \frac{1}{n} \sum_{k=0}^{\lfloor \frac{Nn + 1}{N+K} \rfloor} 
	\binom{n}{k} \binom{n(N+1)-k(N+K)}{n-1}.
\end{align}
\end{subequations}
\end{corollary}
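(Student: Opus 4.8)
The plan is to specialize the general coefficient formulas of Proposition~\ref{prop:numfree} and Proposition~\ref{prop:numprim} to the step set $\mathcal D$, and then extract coefficients by a double application of the binomial theorem, in the same spirit as the proof given above for $\mathcal A$. First I would identify the relevant generating-function factor. Since $\mathcal N = \mathcal D \setminus \{V\} = \{U_N, D_K\} = \{S_N, S_{-K}\}$, the sum $\sum_{S_k \in \mathcal N} y^{N-k}$ collapses to just two terms, $y^{N-N} + y^{N-(-K)} = 1 + y^{N+K}$.

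Substituting this into \eqref{eq:fznm2} and \eqref{eq:formprim} yields at once
\begin{align*}
	|\mathcal F_{\mathcal D}(m,n)| &= [y^{Nn+m}]\, \frac{(1+y^{N+K})^n}{(1-y)^{n+1}},
	\\
	|\mathcal P_{\mathcal D}(1,n)| &= \frac{1}{n}\, [y^{Nn+1}]\, \frac{(1+y^{N+K})^n}{(1-y)^n}.
\end{align*}
I would then expand each factor separately: the numerator by $(1+y^{N+K})^n = \sum_{k} \binom{n}{k} y^{k(N+K)}$ and the denominator by $(1-y)^{-c} = \sum_{j\geq 0} \binom{c-1+j}{c-1} y^j$, taking $c = n+1$ in the free case and $c = n$ in the primary case. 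Multiplying the two series and collecting the coefficient of $y^{Nn+m}$ (respectively $y^{Nn+1}$) forces $j = Nn+m-k(N+K)$ (respectively $j = Nn+1-k(N+K)$), which turns each double sum into the single sum claimed.

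The nonnegativity constraint $j\geq 0$ supplies the upper summation limit $k\leq \lfloor (Nn+m)/(N+K)\rfloor$ (respectively $k\leq \lfloor (Nn+1)/(N+K)\rfloor$), while the binomial identities $\binom{n+j}{n} = \binom{n(N+1)+m-k(N+K)}{n}$ and $\binom{n-1+j}{n-1} = \binom{n(N+1)-k(N+K)}{n-1}$ produce the stated binomial coefficients. The computation is entirely routine; the only point meriting a moment of care is the interplay between this upper limit and the numerator's own range $0\leq k\leq n$: when $m$ is large the floor may exceed $n$, but then the surplus terms vanish because $\binom{n}{k}=0$, so writing the limit as the displayed floor is harmless in every case. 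I would also note that, unlike the $\mathcal A$ case where the numerator $(1-z^{N+K+1})^n$ introduced alternating signs $(-1)^k$, here the numerator $(1+y^{N+K})^n$ is a pure positive binomial, so the resulting sums carry no sign factor — exactly as the statement records.
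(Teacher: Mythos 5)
Your proposal is correct and follows essentially the same route as the paper: specialize Propositions~\ref{prop:numfree} and~\ref{prop:numprim} to $\mathcal N = \{U_N, D_K\}$, obtaining the coefficient extractions $[y^{Nn+m}](1+y^{N+K})^n(1-y)^{-n-1}$ and $\frac{1}{n}[y^{Nn+1}](1+y^{N+K})^n(1-y)^{-n}$, and then expand by the binomial theorem. The only cosmetic difference is that the paper invokes its identity \eqref{eq:expans1} (stated for $(1-z^A)^B$, so it needs the trivial sign adjustment you point out), whereas you multiply the two series directly, which amounts to the same computation.
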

\begin{proof}
It follows from Proposition~\ref{prop:numfree} and Proposition~\ref{prop:numprim}. That is,
\begin{align*}
	|\mathcal F_{\mathcal S}(m,n) | 
	= [z^{N n + m}] \frac{\Big( 1 + z^{N+K} \Big)^n}{( 1 - z )^{n+1}},
	\quad
	|\mathcal P_{\mathcal S}(1,n)| 
	= \frac{1}{n} [z^{Nn+1}] 
	\frac{\Big( 1 + z^{N+K} \Big)^n}{( 1 - z )^{n}}.
\end{align*} 
Using \eqref{eq:expans1} we obtain the required formulas.
\end{proof}

\begin{corollary}
If $K=1$ then for $n\geq 1$ we have
\begin{align*}
	|\mathcal P_{\mathcal S}(0,n)| = (-1)^n + \sum_{j=1}^n \sum_{k=0}^{\lfloor \frac{Nj + 1}{N+1} \rfloor}   \frac{(-1)^{n-j}}{j} 
	\binom{j}{k} \binom{(N+1)(j-k)}{j-1}.
\end{align*}
\end{corollary}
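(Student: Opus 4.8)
The plan is to assemble the result from two facts already established for this family, specialized to the hypothesis $K=1$, so that $\mathcal D=\{V,U_N,D_1\}$. Since $\mathcal D$ contains $V$ and has maximal down step $D_1$, Proposition~\ref{prop:P0} applies with $\mathcal V=\mathcal D$ and gives
\[
	|\mathcal P_{\mathcal D}(0,n)| = \sum_{j=0}^{n} (-1)^{n-j}\, |\mathcal P_{\mathcal D}(1,j)|.
\]
The entire task then reduces to inserting the closed form for $|\mathcal P_{\mathcal D}(1,j)|$ and isolating the degenerate term $j=0$.

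First I would treat the boundary term $j=0$ by hand, since Corollary~\ref{cor:dyckcor1} is stated only for $n\geq 1$. A path in $\mathcal P_{\mathcal D}(1,0)$ runs from $(0,0)$ to $(0,-1)$ and uses no non-vertical step, hence it is forced to be the single vertical step $V$; as $V\in\mathcal D$, we get $|\mathcal P_{\mathcal D}(1,0)|=1$, and this term contributes exactly $(-1)^{n}$. This accounts for the free-standing $(-1)^n$ in the claimed formula, so it need not reappear inside the double sum.

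Next I would specialize Corollary~\ref{cor:dyckcor1} to $K=1$ for each $1\leq j\leq n$. There $N+K$ collapses to $N+1$, the upper summation limit becomes $\lfloor (Nj+1)/(N+1)\rfloor$, and the argument of the inner binomial simplifies through $j(N+1)-k(N+1)=(N+1)(j-k)$, yielding
\[
	|\mathcal P_{\mathcal D}(1,j)| = \frac{1}{j}\sum_{k=0}^{\lfloor (Nj+1)/(N+1)\rfloor} \binom{j}{k}\binom{(N+1)(j-k)}{j-1}.
\]
Multiplying by $(-1)^{n-j}$, summing over $1\leq j\leq n$, and letting the scalar $1/j$ pass inside the inner sum to form the weight $(-1)^{n-j}/j$ produces precisely the asserted double sum.

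I do not expect a genuine obstacle here: the argument is a direct composition of two prior results, and the only point demanding care is the separate handling of $j=0$, where Corollary~\ref{cor:dyckcor1} does not apply and the count $|\mathcal P_{\mathcal D}(1,0)|=1$ must be supplied by hand. The remaining work is the routine bookkeeping of the binomial arguments under the specialization $K=1$.
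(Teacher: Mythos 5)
Your proof is correct and takes essentially the same route as the paper, whose entire proof of this corollary is the citation of Proposition~\ref{prop:P0} together with Corollary~\ref{cor:dyckcor1}, exactly the two ingredients you combine. Your explicit treatment of the $j=0$ term (noting $|\mathcal P_{\mathcal D}(1,0)|=1$, the single step $V$, which produces the free-standing $(-1)^n$) merely fills in bookkeeping the paper leaves implicit.
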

\begin{proof}
It follows from Proposition~\ref{prop:P0} and Corollary~\ref{cor:dyckcor1}.
\end{proof}

\begin{corollary}
Let $P_m(x) = P_{\mathcal D,m}(x)$, then
\begin{equation}
\begin{split}
	P_{0}(x) 
	&= 1 + \delta_0 x P_{0}(x)
	+ x P_{0}(x) \sum_{k=1}^N
	\sum_{d=1}^{k} 
	\binom{N-k+d}{d}	
	\sum_{M}
	\prod_{j=1}^d 
	(P_{m_j}(x)-1),
	\\
	P_{m}(x)
	&= 1 + \delta_m x + x \sum_{k=0}^N \sum_{d=1}^{k+1}  
	\binom{N-k+d-\epsilon_m}{N-k}   
	\sum_{M}
	\prod_{j=1}^d (P_{m_j}(x)-1),
\end{split}
\end{equation}
where $\delta_m = |\mathcal D_{-m}|$ if $m\in\{0,1\}$, $\delta_m = 1$ if $D_m\in\mathcal D$, and $\delta_m = 0$ if $D_m\notin\mathcal D$, for $m\geq 2$.
Further, the summation range $M$ is over all solutions of $m_1+\cdots+m_d=k+m$ such that $1 \leq m_1,\ldots,m_{d-1}\leq K$ and $\max(m,1)\leq m_d\leq K$.
\end{corollary}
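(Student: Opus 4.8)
The plan is to obtain the stated system as a direct specialization of the general functional equations in Proposition~\ref{prop:generV}, taking $\mathcal V = \mathcal D = \{V, U_N, D_K\}$. Since $\mathcal D$ already contains the vertical step $V$ and the maximal up step $U_N$, Proposition~\ref{prop:generV} applies verbatim, and the only work left is to evaluate the weight coefficients $|\mathcal H_{\mathcal D}(m,d,k)|$ that appear there.

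The key observation is that $\mathcal D$ has exactly one up step, namely $U_N = S_N$; its only other non-vertical step is the down step $D_K = S_{-K}$, whose height is negative. Consequently, for every $k$ in the summation range $0 \le k \le N$ we have $\mathcal D_{\ge k} = \{U_N\}$, because $-K < 0 \le k$ excludes $D_K$. Feeding this into Proposition~\ref{prop:h}, the sum over $U_h \in \mathcal D_{\ge k}$ collapses to a single term and yields
\[
	|\mathcal H_{\mathcal D}(m,d,k)| = \binom{N-k+d-\epsilon_m}{N-k},
\]
where $\epsilon_m = 0$ for $m \in \{0,1\}$ and $\epsilon_m = 1$ for $m \ge 2$.

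First I would substitute this value into the second equation of Proposition~\ref{prop:generV}, which reproduces the coefficient $\binom{N-k+d-\epsilon_m}{N-k}$ of the claimed formula for $P_m(x)$ immediately. For the first equation I would set $m = 0$, so that $\epsilon_0 = 0$ and the coefficient becomes $\binom{N-k+d}{N-k} = \binom{N-k+d}{d}$ after applying the symmetry of the binomial coefficient, matching the stated formula for $P_0(x)$. It remains to record the specialization of the constants $\delta_m$: the general prescription $\delta_m = |\mathcal V_{\ge -m}|$ for $m \in \{0,1\}$, together with $\delta_m = 1$ if $D_m \in \mathcal V$ and $\delta_m = 0$ otherwise for $m \ge 2$, specializes to the values listed in the corollary once $\mathcal V = \mathcal D$.

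There is no genuine obstacle here, as the argument is a routine specialization; the only points requiring care are verifying that $\mathcal D_{\ge k} = \{U_N\}$ holds throughout the entire summation range (so that the single-term collapse of Proposition~\ref{prop:h} is valid for every $k$), and correctly applying the binomial symmetry $\binom{N-k+d}{N-k} = \binom{N-k+d}{d}$ to identify the $m=0$ coefficient with the form written in the statement.
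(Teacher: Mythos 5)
Your proposal is correct and takes essentially the same route as the paper: the corollary is just Proposition~\ref{prop:generV} specialized to $\mathcal V = \mathcal D$, where the weights $|\mathcal H_{\mathcal D}(m,d,k)| = \binom{N-k+d-\epsilon_m}{N-k}$ collapse to a single term of Proposition~\ref{prop:h} because $\mathcal D_{\geq k} = \{U_N\}$ for every $0 \leq k \leq N$ (exactly the weight function the paper records just before the corollary). Your use of the symmetry $\binom{N-k+d}{N-k} = \binom{N-k+d}{d}$ for the $m=0$ equation and the specialization of the $\delta_m$ constants match the paper's treatment.
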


For instance, if $K=N=1$, then
\[
	P_0(x) = \frac{1-\sqrt{1-4x -4x^2}}{2 x(1+x)},
	\qquad
	P_1(x) = \frac{1 - \sqrt{1-4x-4x^2}}{2x}.
\]

\vspace{0.2cm}
\noindent \textbf{Remark}. Let $N=K=1$.
\begin{align*}
	(|\mathcal F_{\mathcal D}(0,n)|)_{n\geq 0}
	&= (1, 2, 8, 32, 136, 592, 2624, 11776, 53344, 243392, \ldots)
	&(A006139)
	\\
	(|\mathcal F_{\mathcal D}(1,n)|)_{n\geq 0}
	&= (1, 4, 16, 68, 296, 1312, 5888, 26672, 121696, \ldots)
	&(A179191)
	\\
	(|\mathcal P_{\mathcal D}(0,n)|)_{n\geq 0}
	&= (1, 1, 3, 9, 31, 113, 431, 1697, 6847, 28161, 117631, \ldots) 
	&(A052709)
	\\
	(|\mathcal P_{\mathcal D}(1,n)|)_{n\geq 0}
	&= (1, 2, 4, 12, 40, 144, 544, 2128, 8544, 35008, 145792, \ldots)
	&(A025227)
\end{align*}
The numbers starting with $A$ in parentheses denote corresponding sequences in OEIS \cite{oeis}.

\subsection{Delannoy paths}
\label{sec:delan}

A \emph{Delannoy path} is a lattice path from $(0,0)$ to $(n,k)$ in $\mathbb Z\times\mathbb Z$ consisting of steps $(1,0)$, $(1,0)$, and $(0,1)$. The number of Delannoy paths running from $(0,0)$ to $(n,k)$ is called \emph{Delannoy number} \cite{Delannoy} and denoted by $D(n,k)$. The number of these paths running from $(0,0)$ to $(n,n)$ and never go below the line $y=x$ is called \emph{central Delannoy number} \cite{Fray, Hetyei} and denoted by $D(n)$. It is well-known that
\[
	D(n,k) = \sum_{j=0}^{k}  \binom{n}{j} \binom{n+k-j}{n}.
\]
These numbers are denoted in OEIS \cite{oeis} by A152250 and A001850.
Additionaly, let us denote by $S(n)$ the number of central Delannoy paths running from $(0,0)$ to $(n,n)$ that do not go below the line $y=x$. The numbers of such paths are called the large Schr{\"o}eder numbers \cite{Deutsch1} and they are denoted by A006318 in OEIS \cite{oeis}.

\begin{proposition}
\label{prop:delannoy}
Let $\mathcal E = \{V, U_1, U_0\}$ and $m,n\geq 0$. Then 
\begin{equation}
	D(n,m) = |\mathcal F_{\mathcal E}(m-n,n)|,
	\qquad
	S(n) = |\mathcal P_{\mathcal E}(1,n)| = |\mathcal P_{\mathcal E}(0,n)|.
\end{equation}
\end{proposition}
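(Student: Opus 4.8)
The plan is to prove the two displayed identities separately, leaning on a single geometric bijection between $\mathcal{E}$-paths and ordinary Delannoy paths, supplemented by the forced shape of the terminal step of a $1$-primary path. For the first identity $D(n,m) = |\mathcal{F}_{\mathcal{E}}(m-n,n)|$, I would observe that it is an instance of the Riordan-array computation in Section~\ref{sec:riordan}: with $\mathcal{V} = \mathcal{E}$ one has $N=1$ and $\sum_{S_k\in\mathcal{E}} y^{N-k} = 1+y$, so $d_{i,j} = |\mathcal{F}_{\mathcal{E}}(i-2j,j)| = D(j,i-j)$, and substituting $i = m+n$, $j = n$ gives the claim. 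To keep the correspondence transparent (and to reuse it below), I would describe the underlying map explicitly: reflect an $\mathcal{E}$-path across the $x$-axis and then apply the shear $(x,y)\mapsto(x,x+y)$. Step by step this sends $U_0=(1,0)\mapsto(1,1)$, $U_1=(1,1)\mapsto(1,0)$, and $V=(0,-1)\mapsto(0,1)$, that is, exactly onto the three Delannoy steps, and it carries the endpoint $(n,n-m)$ of a path in $\mathcal{F}_{\mathcal{E}}(m-n,n)$ to $(n,m)$. Since it acts letter by letter as a bijection on step types, it is a bijection of path sets, yielding $|\mathcal{F}_{\mathcal{E}}(m-n,n)| = D(n,m)$. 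The same identity also drops out of Proposition~\ref{prop:numfree} by expanding $(1+y)^n(1-y)^{-n-1}$ and matching coefficients against the stated closed form for $D(n,m)$.

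Next I would establish $|\mathcal{P}_{\mathcal{E}}(1,n)| = |\mathcal{P}_{\mathcal{E}}(0,n)|$ by a one-line bijection: append or delete a terminal vertical step. A path $\pi\in\mathcal{P}_{\mathcal{E}}(1,n)$ ends at $(n,-1)$ while all earlier points lie on or above the $x$-axis; since $V$ is the only step of $\mathcal{E}$ that lowers the height and the penultimate point must have height $0$, the last step of $\pi$ is forced to be $V$ from $(n,0)$ to $(n,-1)$. Deleting it produces a path from $(0,0)$ to $(n,0)$ all of whose points lie on or above the $x$-axis, i.e. an element of $\mathcal{P}_{\mathcal{E}}(0,n)$, and appending $V$ inverts the construction. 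Hence the two families are equinumerous.

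Finally, for $S(n) = |\mathcal{P}_{\mathcal{E}}(0,n)|$ I would feed the primary constraint through the same reflect-and-shear bijection. A path in $\mathcal{P}_{\mathcal{E}}(0,n)$ is an $\mathcal{E}$-path to $(n,0)$ with every point satisfying $y\geq 0$; the reflection turns this into $y\leq 0$, and the shear then turns $y\leq 0$ into $y\leq x$. Thus $\mathcal{P}_{\mathcal{E}}(0,n)$ is carried bijectively onto the Delannoy paths from $(0,0)$ to $(n,n)$ that stay weakly below the diagonal $y=x$. Reflecting across $y=x$ (which swaps $(1,0)$ and $(0,1)$ and fixes $(1,1)$, and fixes the endpoint $(n,n)$) shows these are equinumerous with the Delannoy paths to $(n,n)$ that do not go below $y=x$, i.e. with $S(n)$. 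Combined with the previous paragraph this gives $S(n) = |\mathcal{P}_{\mathcal{E}}(1,n)| = |\mathcal{P}_{\mathcal{E}}(0,n)|$.

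I expect the main obstacle to be the careful bookkeeping in the geometric bijection: checking that reflect-then-shear is genuinely a lattice isomorphism sending the three $\mathcal{E}$-steps bijectively onto the three Delannoy steps, and, above all, tracking precisely how the half-plane constraint $y\geq 0$ is transported first to $y\leq 0$ and then to the diagonal constraint $y\leq x$, including the final reflection across $y=x$ needed to match the paper's convention that $S(n)$ counts paths not going below the diagonal. The remaining ingredients, namely the terminal-$V$ bijection and, should one prefer a purely algebraic route, the coefficient extraction $|\mathcal{P}_{\mathcal{E}}(1,n)| = \tfrac{1}{n}[y^{n+1}]\big(\tfrac{1+y}{1-y}\big)^n$ from Proposition~\ref{prop:numprim} together with the known generating function of the large Schr\"oder numbers, are routine.
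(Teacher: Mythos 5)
Your proof is correct and takes essentially the same approach as the paper: the paper's single affine map $(i,j)\mapsto(i-j,i)$ is exactly your reflect-then-shear composed with your final reflection across $y=x$ (so it sends $V\mapsto(1,0)$, $U_1\mapsto(0,1)$, $U_0\mapsto(1,1)$ and carries the constraint $y\geq 0$ directly to staying weakly above the diagonal), and the paper likewise obtains $|\mathcal P_{\mathcal E}(1,n)|=|\mathcal P_{\mathcal E}(0,n)|$ by removing the forced terminal vertical step. Your supplementary Riordan-array/coefficient-extraction derivation of $D(n,m)=|\mathcal F_{\mathcal E}(m-n,n)|$ is a valid alternative but not needed once the geometric bijection is in place.
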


\begin{proof}
We obtain required bijection by transforming lattice points by the rule $(i,j) \mapsto (i-j,i)$ together with preserving connections between lattice points. Indeed, step $V$ becomes $(1,0)$, $U_1$ becomes $(0,1)$, and $U_0$ becomes $(1,1)$. Additionally, we remove the last vertical step in every path from $\mathcal P_{\mathcal E}(1,n)$.
\end{proof}

Let $\mathcal L = \{U_1, U_0, D_1\}$. By Theorem~\ref{th:biject}, we have
\[
	S(n) = \sum_{\mu\in\mathcal P_{\mathcal L}(1,n)} w(\mu),
\]
if the weight function $w$ is defined as 
\[
	w(U_1) = 1,
	\qquad
	w(U_0) = 3,
	\qquad
	w(D_1) = 2.
\]

\begin{corollary}
The expected number of steps $(1,1)$ in a central Delannoy path running from $(0,0)$ to $(n,n)$ which never goes below the line $y=x$ is
\[
	n \frac{D(n,n-1)}{D(n,n+1) - D(n,n)} = 
	n \left(\sum_{j=0}^{n-1}  \binom{n}{j} \binom{2n-j-1}{n} \right)
	\left(
	\sum_{j=0}^{n}  \binom{n}{j} \binom{2n-j}{n-1}\right)^{-1}.
\]

\end{corollary}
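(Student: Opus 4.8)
The plan is to read the expected value directly off the bijection of Proposition~\ref{prop:delannoy} and then feed the two resulting counts into Theorem~\ref{th:prop1} and equation~\eqref{eq:Pv1n}; the only real computation is an elementary binomial simplification at the very end.

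First I would note that under the correspondence of Proposition~\ref{prop:delannoy} a central Delannoy path from $(0,0)$ to $(n,n)$ staying weakly above $y=x$ is the image of a path in $\mathcal P_{\mathcal E}(1,n)$ with its terminal vertical step deleted, and that the diagonal step $(1,1)$ is precisely the image of $U_0$. Since the deleted step is a $V$ and not a $U_0$, the number of $(1,1)$ steps in a Delannoy path equals the number of $U_0$ steps in the corresponding primary $\mathcal E$-path. Consequently the desired expectation equals the total number of $U_0$ steps across all of $\mathcal P_{\mathcal E}(1,n)$, divided by $|\mathcal P_{\mathcal E}(1,n)| = S(n)$.

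Next I would evaluate the two pieces. Applying Theorem~\ref{th:prop1} with $k=0$ gives $\#Steps(U_0 \in \mathcal P_{\mathcal E}(1,n)) = |\mathcal F_{\mathcal E}(1,n-1)|$, while \eqref{eq:Pv1n} gives $|\mathcal P_{\mathcal E}(1,n)| = \tfrac{1}{n}\big(|\mathcal F_{\mathcal E}(1,n)| - |\mathcal F_{\mathcal E}(0,n)|\big)$. Translating through the dictionary $D(a,b) = |\mathcal F_{\mathcal E}(b-a,a)|$ from Proposition~\ref{prop:delannoy}, these become $|\mathcal F_{\mathcal E}(1,n-1)| = D(n-1,n)$, $|\mathcal F_{\mathcal E}(1,n)| = D(n,n+1)$, and $|\mathcal F_{\mathcal E}(0,n)| = D(n,n)$. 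Invoking the symmetry $D(a,b)=D(b,a)$ of the Delannoy numbers (immediate from reflecting a path across the main diagonal) to rewrite $D(n-1,n)=D(n,n-1)$, and combining, I obtain the first asserted form $n\,D(n,n-1)/\big(D(n,n+1)-D(n,n)\big)$.

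Finally I would substitute $D(n,k)=\sum_{j=0}^{k}\binom{n}{j}\binom{n+k-j}{n}$. The numerator $D(n,n-1)$ becomes $\sum_{j=0}^{n-1}\binom{n}{j}\binom{2n-j-1}{n}$ with no further work. For the denominator I would apply Pascal's rule $\binom{2n+1-j}{n}=\binom{2n-j}{n}+\binom{2n-j}{n-1}$ inside $D(n,n+1)=\sum_{j=0}^{n+1}\binom{n}{j}\binom{2n+1-j}{n}$; the $j=n+1$ summand vanishes because $\binom{n}{n+1}=0$, the sum of the $\binom{2n-j}{n}$ pieces reconstitutes $D(n,n)$ exactly, and what remains is $D(n,n+1)-D(n,n)=\sum_{j=0}^{n}\binom{n}{j}\binom{2n-j}{n-1}$, which is the stated denominator. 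The main thing to watch, rather than a true obstacle, is the bookkeeping: one must confirm that deleting the final $V$ does not change the $U_0$-count, and must track the index shifts in $D(a,b)=|\mathcal F_{\mathcal E}(b-a,a)|$ carefully, using symmetry to align $D(n-1,n)$ with $D(n,n-1)$.
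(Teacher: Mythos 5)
Your proposal is correct and follows essentially the same route as the paper's own proof: use Proposition~\ref{prop:delannoy} to transfer the question to counting $U_0$ steps in $\mathcal P_{\mathcal E}(1,n)$, apply Theorem~\ref{th:prop1} for the numerator and \eqref{eq:Pv1n} for the denominator, then translate back into Delannoy numbers. The only difference is that you make explicit two details the paper leaves implicit --- the symmetry $D(n-1,n)=D(n,n-1)$ needed to align indices, and the Pascal-rule computation showing $D(n,n+1)-D(n,n)=\sum_{j=0}^{n}\binom{n}{j}\binom{2n-j}{n-1}$ --- which fills genuine gaps in the paper's terse argument rather than departing from it.
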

\begin{proof}
By Proposition~\ref{prop:delannoy}, the expected number is the total number of steps $U_0$ in paths from $\mathcal P_{\mathcal V}(1,n)$ divided by the size of $\mathcal P_{\mathcal V}(1,n)$. By Theorem~\ref{th:prop1}, we have
$\#Steps(U_0 \in \mathcal P_{\mathcal V}(1,n)) = |\mathcal F_{\mathcal V}(1, n-1)|$. That is, required number is $D(n,n-1)/S(n)$. By \eqref{eq:Pv1n}, $S(n) = (D(n,n+1) - D(n,n))/n)$.
\end{proof}

\bibliographystyle{plain}

\begin{thebibliography}{99}

\bibitem{Banderier2002}
C. Banderier and P. Flajolet.
\newblock Basic analytic combinatorics of directed lattice paths.
\newblock {\em Theoretical Computer Science}, 281:37--80, 2002.

\bibitem{Delannoy}
H. Delannoy.
\newblock Employ d'{\'e}chiquier pour la r{\'e}solution de certains
  probl{\`e}mes de probabilit{\'e}s.
\newblock {\em Assoc. Franc. Bordeaux}, 24:70--90, 1895.

\bibitem{Dershowitz3}
N.~Dershowitz and S.~Zaks.
\newblock The cycle lemma and some applications.
\newblock {\em European Journal of Combinatorics}, 11(1):35--40, 1990.

\bibitem{Deutsch2}
E. Deutsch.
\newblock Dyck path enumeration.
\newblock {\em Discrete Mathematics}, 204:167--202, 1999.

\bibitem{Deutsch1}
E. Deutsch.
\newblock A bijective proof of the equation linking the {S}chr{\"o}eder
  numbers, large and small.
\newblock {\em Discrete Mathematics}, 241:235--240, 2001.

\bibitem{Shapiro1}
R.~Donaghey and Louis~W. Shapiro.
\newblock Motzkin numbers.
\newblock {\em Journal of Combinatorial Theory Series A}, 23:291--301, 1977.

\bibitem{mdRect}
M.~Dziemia\'nczuk.
\newblock Counting lattice paths with four types of steps.
\newblock {\em Graphs and Combinatorics}, September 2013.
\newblock In Press.

\bibitem{mdRaney}
M.~Dziemia\'nczuk.
\newblock Enumerations of plane trees with multiple edges and raney lattice
  paths.
\newblock {\em Discrete Mathematics}, 337:9--24, 28 Dec. 2014.

\bibitem{Fray}
R. D. Fray and D.P. Roselle.
\newblock Weighted lattice paths.
\newblock {\em Pacific Journal of Mathematics}, 37(1):85--96, 1971.

\bibitem{Concrete}
R.~L. Graham, Donald~E. Knuth, and Oren Patashnik.
\newblock {\em Concrete Mathematics}.
\newblock Addison-Wesley Publishing Company, second edition, 1994.

\bibitem{Hetyei}
G. Hetyei.
\newblock Shifted {J}acobi polynomials and {D}elannoy numbers.
\newblock arXiv 0909.5512, December 2009.

\bibitem{Humphreys}
K. Humphreys.
\newblock A history and a survey of lattice path enumeration.
\newblock {\em Journal of Statistical Planning and Inference},
  140(8):2237--2254, 2010.

\bibitem{KnuthVol2}
D.~E. Knuth.
\newblock {\em The Art of Computer Programming}, volume vol. 3: Sorting and
  Searching.
\newblock Addison-Wesley, second edition, 1998.

\bibitem{Raney}
G.~N. Raney.
\newblock Functional composition patterns and power series reversion.
\newblock {\em Transactions of the American Mathematical Society}, 94:441--451,
  1960.

\bibitem{Roblet}
E.~Roblet.
\newblock Une interpr{\'e}tation combinatoire des approximants de {P}ad{\'e}.
\newblock {\em Publications du LCIM, Universit{\'e} du Qu{\'e}bec {\`a}
  Montr{\'e}al}, 1994.

\bibitem{Shapiro3}
L.~W. Shapiro, S.~Getu, W.-J. Woan, and L.~Woodson.
\newblock The {R}iordan group.
\newblock {\em Discrete Applied Mathematics}, 34:229--239, 1991.

\bibitem{oeis}
N.~J.~A. Sloane.
\newblock The on-line encyclopedia of integer sequences.
\newblock Published electronically at \texttt{http://oeis.org}.

\bibitem{Sprugnoli1}
R.~Sprugnoli.
\newblock {R}iordan arrays and combinatorial sums.
\newblock {\em Discrete Mathematics}, 32:267--290, 1994.

\bibitem{stanton}
R.~G. Stanton and D.~D. Cowan.
\newblock Note on a square functional equation.
\newblock {\em SIAM Review}, 12:277--279, 1970.

\bibitem{TakasBook}
L.~Tak{\'a}cs.
\newblock {\em On the ballot theorems}.
\newblock Advances in Combinatorial Methods and Applications to Probability and
  Statistics. Birkh{\"a}user, 1997.

\bibitem{Rensburg}
E.~J.~Janse van Rensburg.
\newblock {\em The statistical mechanics of interacting walks, polygons,
  animals and vesicles}.
\newblock Oxford University Press, 2000.

\bibitem{Varvak}
A.~L. Varvak.
\newblock {\em Encoding Properties of Lattice Paths}.
\newblock PhD thesis, Brandeis University, 2004.

\bibitem{Viennot1}
G.~Viennot.
\newblock Une th{\'e}orie combinatoire des polyn{\^o}mes orthogonaux
  g{\'e}n{\'e}raux.
\newblock {\em Notes of lectures given at University of Quebec in Montreal},
  1983.

\bibitem{wilfgener}
H.~S. Wilf.
\newblock {\em Generatingfunctionology}.
\newblock Academic Press, 1994.

\end{thebibliography}

\end{document}